\newtheorem{theorem}{Theorem}[section]
\newtheorem{corollary}[theorem]{Corollary}
\newtheorem{lemma}[theorem]{Lemma}
\newtheorem{observation}[theorem]{Observation}
\theoremstyle{definition}
\newtheorem{definition}[theorem]{Definition}
\newcounter{claimcounter}
\newcounter{claimproofcounter}
\newcommand{\cD}{\mathcal{D}}
\journal{Advances in Applied Mathematics}
\begin{document}

\begin{frontmatter}

%% Title, authors and addresses

%% use the tnoteref command within \title for footnotes;
%% use the tnotetext command for theassociated footnote;
%% use the fnref command within \author or \address for footnotes;
%% use the fntext command for theassociated footnote;
%% use the corref command within \author for corresponding author footnotes;
%% use the cortext command for theassociated footnote;
%% use the ead command for the email address,
%% and the form \ead[url] for the home page:
%% \title{Title\tnoteref{label1}}
%% \tnotetext[label1]{}
%% \author{Name\corref{cor1}\fnref{label2}}
%% \ead{email address}
%% \ead[url]{home page}
%% \fntext[label2]{}
%% \cortext[cor1]{}
%% \address{Address\fnref{label3}}
%% \fntext[label3]{}

\title{Reconstructibility of unrooted level-$k$ phylogenetic networks from distances}

%% use optional labels to link authors explicitly to addresses:
%% \author[label1,label2]{}
%% \address[label1]{}
%% \address[label2]{}

\author[Delft]{Leo van Iersel}
\ead{L.J.J.vanIersel@tudelft.nl}

\author[EA]{Vincent Moulton}
\ead{V.Moulton@uea.ac.uk}

\author[Delft]{Yukihiro Murakami\corref{cor1}}
\ead{Y.Murakami@tudelft.nl}

\address[Delft]{Delft Institute of Applied Mathematics, Delft University of Technology, Van Mourik Broekmanweg 6, 2628 XE, Delft, The Netherlands}
\address[EA]{School of Computing Sciences, University of East Anglia, NR4 7TJ, Norwich, United Kingdom}

\cortext[cor1]{Corresponding author}

% \footnote{Delft Institute of Applied Mathematics, Delft University of Technology, Van Mourik Broekmanweg 6,
% 2628 XE, Delft, The Netherlands, \{L.J.J.vanIersel, Y.Murakami\}@tudelft.nl. Research funded in part by the Netherlands Organization for Scientific Research (NWO), including Vidi grant 639.072.602, and partly by the 4TU Applied Mathematics Institute.} \and Vincent Moulton\footnote{School of Computing Sciences,
% University of East Anglia, 
% NR4 7TJ, Norwich, United Kingdom, V.Moulton@uea.ac.uk. Vincent Moulton thanks the Netherlands Organization for Scientific Research (NWO), including Vidi grant 639.072.602, for its support to visit TU Delft.}
% \and Yukihiro Murakami\footnotemark[1]}

\begin{abstract}
    A phylogenetic network is a graph-theoretical tool
    that is used by biologists to represent the evolutionary history of a collection of species.
    One potential way of constructing such networks is via a distance-based approach, where one is asked to find a phylogenetic network that in some way represents a given distance matrix, which gives information on the evolutionary distances between present-day taxa.
    Here, we consider the following question.
    For which~$k$ are unrooted level-$k$ networks uniquely determined by their distance matrices?
    We consider this question for shortest distances as well as for the case that the multisets of all distances is given.
    We prove that level-$1$ networks and level-$2$ networks are reconstructible from their shortest distances and multisets of distances, respectively.
    Furthermore we show that, in general, networks of level higher than~$1$ are not reconstructible from shortest distances and that networks of level higher than~$2$ are not reconstructible from their multisets of distances.
\end{abstract}

%%Graphical abstract
% \begin{graphicalabstract}
% %\includegraphics{grabs}
% \end{graphicalabstract}

%%Research highlights
% \begin{highlights}
% \item Research highlight 1
% \item Research highlight 2
% \end{highlights}

\begin{keyword}
%% keywords here, in the form: keyword \sep keyword
Phylogenetic networks \sep Level-$k$ networks \sep Distance matrix \sep Reconstructibility
%% PACS codes here, in the form: \PACS code \sep code

%% MSC codes here, in the form: \MSC code \sep code
%% or \MSC[2008] code \sep code (2000 is the default)

\end{keyword}

\end{frontmatter}

%% \linenumbers

%% main text
\section{Introduction}

Phylogenetic trees are often used to represent the evolutionary history of species, or more generally, taxa~\cite{felsenstein2004inferring}.
Trees can be a powerful tool  
for elucidating relationships between species, especially in case the 
species in question have evolved only via speciation events.
However, other events often also drive evolution, including 
hybridization, introgression, and lateral gene transfer.
When such reticulate events occur, more general graphical structures,
known as phylogenetic networks~\cite{bapteste2013networks,huson2010phylogenetic} 
can be a useful addition to trees.

There are two main types of phylogenetic networks: rooted and unrooted networks.
A rooted network is a directed acyclic graph that represents how extant taxa have evolved from a single common ancestor, also known as the root.
Internal vertices denote either speciation or reticulate events, and edges have directions to indicate the transfer of genetic material between the two vertices that are incident to it.
Unrooted networks have similar properties except they have no direction on the edges.
A lack of direction could, for example, represent an ambiguity in knowledge of the
direction in which genetic material is transferred  between species.
Note that every rooted network has an underlying unrooted network, that can be obtained by suppressing the root vertex and ignoring edge directions.
Conversely, one can try to obtain a rooted network from an unrooted network by estimating the location of the root via an outgroup, if it is known which vertices represent reticulations~\cite{huber2019rooting}.
In this paper we will only consider unrooted networks, which we shall call networks for short.
We present an example of such a network in Figure~\ref{fig:Example}.

As the shift from phylogenetic 
trees to networks has become more prevalent in the biological 
literature, finding good ways to construct phylogenetic networks 
has become a core theme in phylogenetics.
Such an undertaking has experienced major developments through various reconstruction approaches (e.g., maximum-likelihood~\cite{jin2006maximum}; building blocks~\cite{van2018polynomial,Murakami2019,van2019practical}; distance-based~\cite{bryant2004neighbor, bordewich2018recovering}; see~\cite{huson2010phylogenetic} for an overview).
In this paper we consider the distance-based approach, in which one is given a distance matrix on the set of taxa in question and then aims to build a network representing this matrix.
An entry in a distance matrix gives the evolutionary distance, a measure of genetic divergence between distinct taxa.
This raises the following question.
`Is there a network that precisely represents the given distance matrix?'

The groundwork for distance-based methods is well-established for phylogenetic trees~\cite{sokal1958statistical,saitou1987neighbor,felsenstein2004inferring,pardi2016distance}.
For networks, the story is more complicated.
Since networks can contain cycles, there can be more than one path between two taxa, which can lead to more than one distance.
This results in various types of distances that can be associated to a network.
Two such types, which we cover in this paper, include the shortest distances and the multisets of distances.
For the shortest distances, we search for a network in which the distance of a shortest path between each pair of taxa coincides with the matrix; for the multisets of distances, we search for a network in which the multiset of distances of all paths between each pair of taxa coincides with the matrix.
In Figure~\ref{fig:Example}, we present a network with its multisets of distances.
The shortest distance matrix can be worked out from the multisets of distances by taking the smallest element for each matrix entry.

Before proceeding any further, we must acquaint ourselves with two similar, yet subtly different notions that are vital in understanding distance-based methods for networks.
One can either \emph{construct} or \emph{reconstruct} networks from distance matrices.
Constructing a network means that we initially start with a distance matrix and come up with a network that is consistent in some way with such a matrix.
Some classical network construction methods from distances include Neighbor-Net~\cite{bryant2004neighbor} and T-Rex~\cite{makarenkov2001t}.
In the process, one is sometimes interested in finding a network that optimizes some particular criterion, such as the hybridization number~\cite{chan2006reconstructing, chang2018reconstructing}.
Networks obtained via construction methods are often non-unique, which is the biggest distinction between construction 
and reconstruction methods.

Reconstructing a network means that we start with a network, find the distance matrix that is associated to it (e.g., shortest distances), and try to reconstruct the original network from the distance matrix.
The goal then is to decide which networks can be uniquely 
reconstructed from their distances, in other words, to decide upon the \emph{reconstructibility} of 
different classes of networks from their various distance matrices.
The main results of~\cite{bryant2007consistency,bordewich2016determining,bordewich2016algorithm,bordewich2018constructing, bordewich2018recovering, willson2006unique,hayamizu2020recognizing} follow this exact format; they 
show that some unrooted/rooted networks (or a representative of the equivalence class) 
can be reconstructed from certain distance matrices.
Roughly speaking, they show that within a particular network class, if two networks have the same particular distance matrix then the networks are equivalent.
Interestingly, although distance-based reconstruction results have been recently 
developed for rooted networks, similar problems have been less 
studied for unrooted networks. 

As a first step in this direction, we focus on reconstructing unweighted unrooted networks.
Every edge in the network has a weight of~$1$, which means that distances between two taxa correspond to the number of edges contained in paths between two taxa.
Now, to identify which networks are reconstructible from certain distance matrices, we call on the notion of the level of a network.
The \emph{level} of a network is the maximum number of edges that need to be deleted from a biconnected component to obtain a tree~\cite{gambette2012quartets}.
In this paper we consider the problem of reconstructing level-$k$ networks in general, both from their shortest distances and their multisets of distances.

A recent paper has shown that optimal cactus graphs are reconstructible from their shortest distances, while in general there could be many cactus graphs that realize the same shortest distances~\cite{hayamizu2020recognizing}.
Cactus graphs are connected graphs in which each edge belongs to at most one cycle -- these graphs are a generalization of level-$1$ networks.
Here, an optimal network refers to one that realizes the shortest distance matrix, in which the total sum of edge weights is minimal.
The difference between this result and our result is that we consider unweighted networks, for which we may leave out the optimality restriction.
The problem of reconstructing cactus graphs has also been of interest
within the graph theory literature.
Some have considered reconstructing them from subgraphs~\cite{geller1969reconstruction}, and others from shortest path information~\cite{kranakis2016reconstructing}, which are both different from the distance data that we consider in this paper.
Therefore, our problem of reconstructing networks from distances is fundamentally different from both of these papers.

The rest of the paper is organised as follows.
In the next section we introduce basic definitions and notations.
In Section~\ref{sec:Negative}, we show that in general, level-$2$ networks are not reconstructible from their shortest distances (Lemma~\ref{lem:L2NSD}), and that networks of level higher than~$2$ are not reconstructible from their shortest distances nor from their multisets of distances (Lemma~\ref{lem:LkNSM}).
In Section~\ref{sec:ShortestDistance}, we show that level-$1$ networks as well as level-$2$ networks on fewer than~$4$ leaves are reconstructible from their shortest distances (Theorem~\ref{thm:L1ReconstructibleShortPath} and Lemma~\ref{lem:L2<3ShortestRecon}).
In Section~\ref{sec:Multiset}, we show that level-$2$ networks are reconstructible from their multisets of distances (Theorem~\ref{thm:L2ReconstructibleMultiset}).
We conclude with a discussion in Section~\ref{sec:Discussion} on open problems and possible future directions in this area.

\section{Preliminaries}\label{sec:preliminaries}

\begin{definition}
    Let~$X$ be a non-empty finite set.
    An \emph{(unweighted unrooted binary phylogenetic) network}~$N$ on~$X$ is a simple graph (an unweighted, undirected graph with no loops or multiple edges) with
    \begin{enumerate}
        \item $|X|$ vertices of degree-1 (the \emph{leaves}); and
        \item all other vertices of degree-3 (the \emph{internal vertices}).
    \end{enumerate}
    The leaves are bijectively labelled by the set~$X$.
    If~$|X| = 1$ then we define the singleton graph with one vertex labelled by the element of~$X$ as the network on~$X$.
    A network with no cycles is a \emph{(phylogenetic) tree}.
\end{definition}

\emph{Deleting an edge}~$uv$ from a network is the action of removing the edge~$uv$ and suppressing any degree-$2$ vertices in the resulting subgraph.
\emph{Deleting a vertex} from a network is the action of removing the vertex, deleting all its incident edges, and suppressing any degree-$2$ vertices in the resulting subgraph.
A \emph{cut-edge} of a network is an edge whose deletion disconnects the network.
We call a cut-edge \emph{trivial} if the edge is incident to a leaf, and \emph{non-trivial} otherwise.
Note that for a network~$N$ on~$X$, deleting a cut-edge breaks the network into two components.
The leaf-set~$X$ can be partitioned into the leaves that are contained in one component and the leaves that are contained in the other; therefore every cut-edge of a network \emph{induces} a partition~$X=Y\cup Z$ of~$X$ (where one of~$Y$ or~$Z$ could possibly be empty).
These partitions are not unique in general (i.e., two distinct cut-edges can induce the same partition).
Upon cutting a non-trivial cut-edge, if one of the components is a tree, then we say that the subgraph that corresponds to this component is a \emph{pendant subtree}. 
Given a cut-edge~$uv$ we say that a leaf~$x$ can be \emph{reached from~$u$ via~$uv$} if, upon deleting the edge~$uv$ without suppressing degree-$2$ vertices,~$x$ is in the same component as~$v$ in the resulting subgraph.

A \emph{biconnected component (blob)} of a network is a maximal 2-connected subgraph with at least three vertices.
We say that a network is a \emph{level-$k$ network} if at most~$k$ edges must be deleted from every blob to obtain a tree.
We say that a leaf is \emph{contained} in a blob if the neighbour of the leaf is a vertex of the blob.
A cut-edge is \emph{incident} to a blob if one of the endpoints of the edge is a vertex of the blob.
A blob is \emph{pendant} if there is exactly one non-trivial cut-edge that is incident to the blob.
We say that a leaf~$x$ can be \emph{reached} from a blob~$B$ via a cut-edge~$uv$ if~$u$ is a vertex of~$B$ and~$x$ can be reached from~$u$ via~$uv$.

Let~$N$ be a network on~$X$ and let~$x$ and~$y$ be leaves in~$N$.
We recall the notation used in~\cite{bordewich2018recovering}.
The \emph{multiset of distances} between~$x$ and~$y$, denoted~$d(x,y)$ (and sometimes as~$d^N(x,y)$ where necessary), is the multiset consisting of lengths of all possible paths between~$x$ and~$y$ in~$N$.
Since~$N$ is an unweighted network, the length of a path is simply the number of edges contained in the path.
We let~$\mathcal{D}(N)$ denote the~$|X|\times |X|$ matrix whose~$(x,y)$-th entry is~$d(x,y)$.
We further define the \emph{shortest distance} between~$x$ and~$y$, denoted~$d_m(x,y)$, by taking~$d_m(x,y) = \min d(x,y)$.
We analogously define~$\mathcal{D}_{m}(N)$ to be the~$|X| \times |X|$ matrix whose~$(x,y)$-th entry is~$d_m(x,y)$.
An example of a network with its multisets of distances is illustrated in Figure~\ref{fig:Example}.

We use the following notation for the multisets.
A \emph{multiset} is a tuple~$(A,m)$ where~$A$ is a set and~$m$ is a function that specifies the multiplicity of each element in~$A$.
For~$x\notin A$, we let~$m(x) = 0$.
We will, for the most part, write~$(A,m)$ as~$A = \{a_1^{m(a_1)},\ldots, a_k^{m(a_k)}\}$.
Let~$n$ be an integer.
We let~$A-n$ denote the multiset obtained by subtracting~$n$ from each element of~$A$ (i.e.,~$A-n = \{(a_i-n)^{m(a_i)}:i\in[k]\}$.)
Given two multisets~$(A,m_A)$ and~$(B,m_B)$, the \emph{sum}~$A+B$ is defined as the multiset~$(A\cup B, m_{A+B})$ where~$m_{A+B}= m_A(x) + m_B(x)$ for~$x\in A\cup B$.

A network~$N$ \emph{realises} the multisets of distances~$\cD$ if~$\cD(N) = \cD$.
Similarly, a network~$N$ realises the shortest distances~$\cD_m$ if~$\cD_m(N) = \cD_m$.
As we will show in the next section, there could be many distinct networks that realise the same distance matrix.
Therefore we emphasize the following notion.

\begin{definition}
    A network~$N$ is \emph{reconstructible} from its multisets of distances (respectively the shortest distances) if~$N$ is the only network that realises~$\cD(N)$ (respectively $\cD_m(N))$.
\end{definition}

We now introduce two substructures of networks, the \emph{cherry} and the \emph{chain}, which are key ingredients in proving the main results of this paper.
\begin{definition}
    Two leaves~$x$ and~$y$ form a \emph{cherry} if they share a common neighbour.
\end{definition}
Observe that~$x$ and~$y$ form a cherry if and only if~$d(x,y) = \{2\}$.
In addition,~$x$ and~$y$ form a cherry if and only if~$d_m(x,y) = 2$.
\begin{definition}
    A \emph{chain} of length~$k\geq1$ is a~$k$-tuple of leaves~$(a_1,\ldots,a_k)$ such that~$d_m(a_i,a_{i+1}) = 3$ for all~$i\in [k-1] = \{1,\ldots, k-1\}$.
\end{definition}

Call a chain~$(a_1,\ldots, a_k)$ \emph{maximal} if there is no chain~$(b_1,\ldots, b_\ell)$ such that~$\{a_1,\ldots,a_k\}\subsetneq \{b_1,\ldots,b_\ell\}$. 
We assume all chains to be maximal, unless stated otherwise.
Two chains~$(a_1,\ldots,a_k)$ and~$(b_1,\ldots,b_\ell)$ are \emph{adjacent} if~$d_m(a_i,b_j)=4$ for at least one of~$i\in\{1,k\}$ and~$j\in\{1,\ell\}$.
Two chains are \emph{adjacent twice} if~$d_m(a_1,b_1) = d_m(a_k,b_\ell) = 4$ or if~$d_m(a_1,b_\ell) = d_m(a_k,b_1) = 4$.

Given a chain~$a = (a_1,\ldots,a_k)$, let~$p_i$ denote the neighbour of the leaf~$a_i$ for~$i\in[k]$.
The edges~$p_ip_{i+1}$ for~$i\in[k-1]$ are called the \emph{edges of the chain}.
We say that the chain is \emph{incident to cut-edges} if the edges of the chain are cut-edges.
Observe that one of these edges is a cut-edge if and only if they are all cut-edges.
We say that the chain is \emph{contained in a blob~$B$} if the edges of the chain are edges in~$B$.
Observe that one of these edges is an edge of~$B$ if and only if they are all edges in~$B$.

Note that a leaf can be in both a cherry and a chain.
In a network without cherries, it is possible to partition the leaves into chains. 

Let~$B$ be a level-$2$ blob of some network~$N$. 
We may obtain the \emph{generator} of~$B$ by deleting all cut-edges that are incident to~$B$ and taking the component that is~$B$.
The edges of the generator of~$B$ are called the \emph{sides} of the generator, or simply the sides of~$B$.
Let~$N$ be a network with no pendant subtrees, let~$e$ be a side of~$B$, and let~$x$ be a leaf in~$N$.
If the neighbour of~$x$, say~$p$, subdivides~$e$ in~$N$ then we say that \emph{$x$ is on the side~$e$} or that \emph{the side~$e$ contains~$x$}.
We say that a chain~$a = (a_1,\ldots,a_k)$ \emph{is on the side~$e$} or that \emph{the side~$e$ contains the chain~$a$} if every leaf~$a_i$ in the chain is on the side~$e$.
If an endpoint of a cut-edge~$uv$ subdivides~$e$ then we say that \emph{the side~$e$ is incident to~$uv$.}

For an overview of the definitions presented in this section, see Figure~\ref{fig:Example}.

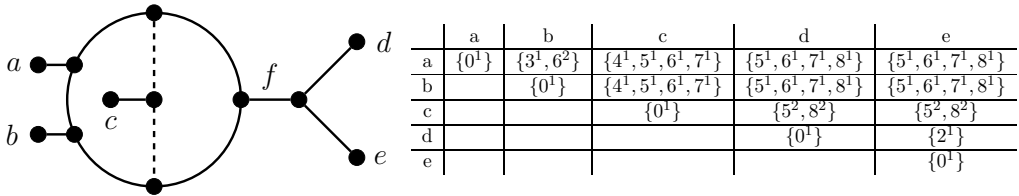
\begin{figure}[h]
\centering
\begin{minipage}[c]{.40\textwidth}
 \resizebox{\textwidth}{!}{
 \begin{tikzpicture}[every node/.style={draw, circle, fill, inner sep=0pt}]
 	\tikzset{edge/.style={very thick}}
 	
    \draw[black, very thick] (0,0) circle (1.5);

    \node[] (v0)    at (0,0)            {a};
    \node[] (v1)    at (-1.3747,0.6)    {a};
    \node[] (v2)    at (1.5,0)          {a};
    \node[] (v3)    at (2.5,0)          {a};
    \node[] (v4)    at (0,1.5)          {a};
    \node[] (v5)    at (0,-1.5)         {a};
    \node[] (v6)    at (-1.3747,-0.6)   {a};
    \node[] (a)     at (-2,0.6)         {a};
    \node[] (b)     at (-2,-0.6)        {a};
    \node[] (c)     at (-0.75,0)        {a};
    \node[] (d)     at (3.5,1)          {a};
    \node[] (e)     at (3.5,-1)         {a};
    
    \node[draw = none, fill = none, left=1mm of a] (la) {\large $a$};
    \node[draw = none, fill = none, left=1mm of b] (lb) {\large $b$};
    \node[draw = none, fill = none, below=1mm of c] (lc) {\large $c$};
    \node[draw = none, fill = none, right=1mm of d] (ld) {\large $d$};
    \node[draw = none, fill = none, right=1mm of e] (le) {\large $e$};

    \draw[edge] (v0) -- (c);
    \draw[edge] (v1) -- (a);
    \draw[edge] (v2) -- (v3) node[draw = none, fill = none, above=1mm, midway]{\large $f$};
    \draw[edge, dashed] (v4) -- (v5);
    \draw[edge] (v3) -- (d);
    \draw[edge] (v3) -- (e);
    \draw[edge] (v6) -- (b);

 \end{tikzpicture}
}
\end{minipage}\hfill
\begin{minipage}[c]{.60\textwidth}
 \centering
 \small
 \resizebox{\columnwidth}{!}{
    \begin{tabular}{c|c|c|c|c|c}
            & a         & b             & c             & d             & e  \\
        \hline
        a   & $\{0^1\}$   & $\{3^1,6^2\}$   & $\{4^1,5^1,6^1,7^1\}$   & $\{5^1,6^1,7^1,8^1\}$ & $\{5^1,6^1,7^1,8^1\}$\\
        \hline
        b   &           & $\{0^1\}$       & $\{4^1,5^1,6^1,7^1\}$   & $\{5^1,6^1,7^1,8^1\}$ & $\{5^1,6^1,7^1,8^1\}$\\
        \hline
        c   &           &               & $\{0^1\}$       & $\{5^2,8^2\}$ & $\{5^2,8^2\}$\\
        \hline
        d   &           &               &               & $\{0^1\}$       & $\{2^1\}$\\
        \hline
        e   &           &               &               &               & $\{0^1\}$
    \end{tabular}
    }
 
\end{minipage}\hfill
\caption{A level-$2$ network with its multisets of distances. The network contains two chains~$(a,b)$ and~$(c)$, and a cherry~$\{d,e\}$. All edges incident to leaves are trivial cut-edges, and edge~$f$ is the only cut-edge that is non-trivial.
The dashed path is the side of the blob that contains the leaf~$c$.
In the distance matrix, the diagonal elements are~$\{0\}$, and as the matrix is symmetric, many of the elements are omitted.
The shortest distance matrix can be obtained by taking the smallest element in each multisets to be the element of the matrix in the same position.}
\label{fig:Example}
\end{figure}

%-----------------------------------------------
%------------------Section 3------------------
%-----------------------------------------------

\section{Networks that cannot be reconstructed}\label{sec:Negative}

%In this section we show that networks of level at least~$2$ are in general not reconstructible from their shortest distances.
%We also show that networks of level at least~$3$ are in general not reconstructible from their multiset of distances.

In this section we give examples of networks that cannot be reconstructed from their shortest distances or from their multisets of distances.
Figure~\ref{fig:level2} shows two distinct level-$2$ networks with the same shortest distance matrix.
Observing that we may replace the leaves with the same label by the same pendant subtree to extend this example to a level-$2$ network on at least~$4$ leaves, we obtain the following lemma.

\begin{lemma}\label{lem:L2NSD}
	There exist two distinct level-$2$ networks on~$n$ leaves for~$n\ge4$ with the same shortest distance matrix.
\end{lemma}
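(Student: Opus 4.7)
The plan is to exhibit two distinct level-$2$ networks on a small base leaf-set that realise the same shortest distance matrix, and then lift this example to any $n \geq 4$ by pendant-subtree substitution.

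For the base case I would rely on the pair of networks shown in Figure~\ref{fig:level2}. Verification amounts to computing $d_m(x,y)$ for each pair of leaves in both diagrams and checking that the two matrices coincide; since each network has only a handful of vertices, this is a finite inspection that can be carried out directly by enumerating the paths between each pair of leaves and taking minima.

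The extension rests on the following substitution principle. Suppose $N_1$ and $N_2$ share a common leaf label $x$ and satisfy $\mathcal{D}_m(N_1) = \mathcal{D}_m(N_2)$, and let $T$ be a fixed binary tree on a new leaf-set $Y$ disjoint from $X$. Form $N_i'$ from $N_i$ by replacing the leaf $x$ (and its incident edge) by $T$, attached through a new cut-edge to the former neighbour of $x$. I would argue that $\mathcal{D}_m(N_1') = \mathcal{D}_m(N_2')$ by a case split on the location of the two leaves. For $u,v \in X \setminus \{x\}$ the shortest path lies entirely inside $N_i$ and is unchanged. For $u \in X \setminus \{x\}$ and $v \in Y$ every $u$-$v$ path crosses the new cut-edge, so its length decomposes as a path from $u$ to the attachment point (obtainable from $d_m^{N_i}(u,x)$, which equals $d_m^{N_2}(u,x)$ by hypothesis) plus a path from the attachment point to $v$ inside $T$; minimising yields the same value in $N_1'$ and $N_2'$. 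For $u,v \in Y$ the shortest path is determined entirely by $T$ and is therefore identical in both. Since the blob structure of each $N_i$ is untouched and $T$ contributes no new blobs, each $N_i'$ remains level-$2$, and the two enlarged networks are still non-isomorphic because the leaves outside $T$ already witness non-isomorphism.

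Choosing $T$ on enough new leaves then produces two distinct level-$2$ networks on $n$ leaves with identical shortest distance matrices for every $n \geq 4$. The main conceptual obstacle is pinpointing the base example displayed in Figure~\ref{fig:level2}; the substitution argument itself is routine bookkeeping, relying only on the decomposition of shortest paths through the new cut-edge that attaches the pendant subtree.
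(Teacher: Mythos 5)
Your proposal matches the paper's argument: the paper likewise takes the two networks of Figure~\ref{fig:level2} as the base case on four leaves and extends to arbitrary $n\ge4$ by replacing a leaf (with the same label in both networks) by the same pendant subtree. Your write-up simply spells out the routine shortest-path decomposition across the new cut-edge that the paper leaves implicit, so the approach and content are essentially identical.
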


\begin{figure}[h]
    \centering
    \resizebox{\columnwidth}{!}{
    \begin{tikzpicture}[every node/.style={draw, circle, fill, inner sep=0pt}]
     	\tikzset{edge/.style={very thick}}
        \begin{scope}[xshift=0cm,yshift=0cm]
        \draw[black, very thick] (0,0) circle (1.5);
        \draw[black, very thick] (-4,0) circle (1.5);

        \node[] (v2)    at (1.3747,0.6)     {a};
        \node[] (v3)    at (1.3747,-0.6)    {a};
        \node[] (v4)    at (-1.5,0)         {a};
        \node[] (u1)    at (-2.5,0)         {a};
        \node[] (u2)    at (-4,1.5)         {a};
        \node[] (u3)    at (-4,-1.5)        {a};
        \node[] (u4)    at (-5.3747,0.6)    {a};
        \node[] (u5)    at (-5.3747,-0.6)   {a};
        
        \node[] (a)     at (-6.3747,0.6)    {a};
        \node[] (b)     at (-6.3747,-0.6)   {a};
        \node[] (c)     at (2.3747,0.6)     {a};
        \node[] (d)     at (2.3747,-0.6)    {a};

        \node[draw = none, fill = none, left=1mm of a]  (la) {\large $a$};
        \node[draw = none, fill = none, left=1mm of b]  (lb) {\large $b$};
        \node[draw = none, fill = none, right=1mm of c] (lc) {\large $c$};
        \node[draw = none, fill = none, right=1mm of d] (ld) {\large $d$};

        \draw[edge] (u2) -- (u3);
        \draw[edge] (u1) -- (v4);
        \draw[edge] (v2) -- (c);
        \draw[edge] (v3) -- (d);
        \draw[edge] (u4) -- (a);
        \draw[edge] (u5) -- (b);
        \end{scope}
        
        \begin{scope}[xshift=12cm,yshift=0cm]
        \draw[black, very thick] (0,0) circle (1.5);
        \draw[black, very thick] (-4,0) circle (1.5);

        \node[] (v2)    at (1.3747,0.6)     {a};
        \node[] (v3)    at (1.3747,-0.6)    {a};
        \node[] (v4)    at (-1.5,0)         {a};
        \node[] (u1)    at (-2.5,0)         {a};
        \node[] (u2)    at (-4,1.5)         {a};
        \node[] (u3)    at (-4,-1.5)        {a};
        \node[] (u4)    at (-5.3747,0.6)    {a};
        \node[] (u5)    at (-5.3747,-0.6)   {a};
        
        \node[] (a)     at (-6.3747,0.6)    {a};
        \node[] (b)     at (-6.3747,-0.6)   {a};
        \node[] (c)     at (2.3747,0.6)     {a};
        \node[] (d)     at (2.3747,-0.6)    {a};

        \node[draw = none, fill = none, left=1mm of a]  (la) {\large $c$};
        \node[draw = none, fill = none, left=1mm of b]  (lb) {\large $d$};
        \node[draw = none, fill = none, right=1mm of c] (lc) {\large $a$};
        \node[draw = none, fill = none, right=1mm of d] (ld) {\large $b$};

        \draw[edge] (u2) -- (u3);
        \draw[edge] (u1) -- (v4);
        \draw[edge] (v2) -- (c);
        \draw[edge] (v3) -- (d);
        \draw[edge] (u4) -- (a);
        \draw[edge] (u5) -- (b);
        \end{scope}

    \end{tikzpicture}}
    \caption{Two level-$2$ networks with the same shortest distances between any pair of leaves.}
    \label{fig:level2}
\end{figure}
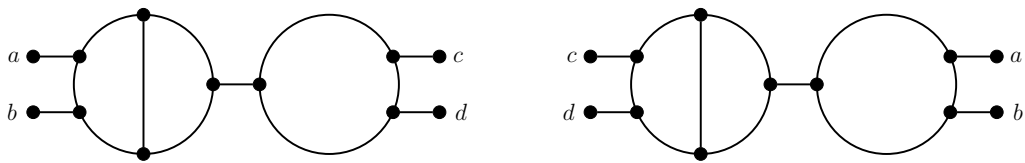

Note that the networks in Figure~\ref{fig:level2} have different multisets of distances -- we investigate this further in Section~\ref{sec:Multiset} and show there that level-$2$ networks are reconstructible from their multisets of distances.

Figure~\ref{fig:level3} presents two level-$3$ networks on~$2$ leaves that have the same multisets of distances.
Because the shortest distance matrix can be obtained by taking the smallest number for each element in the multisets of distances, the two networks also have the same shortest distance matrix.
Observe that this can be generalized to level-$k$ networks for~$k\ge 3$ by replacing the level-$3$ blob by an arbitrary level-$k$ blob.
In addition, applying the same pendant subtree argument as in the level-$2$ network case gives us the following lemma.

\begin{lemma}\label{lem:LkNSM}
	There exist two distinct level-$k$ networks for all~$k\ge3$ with the same shortest distance matrix / multisets of distances.	
\end{lemma}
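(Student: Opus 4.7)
The plan is to follow the template of Lemma~\ref{lem:L2NSD}: exhibit a single explicit counterexample on the smallest possible leaf set at the lowest level covered, then lift it to larger levels and larger leaf sets by structural modifications that preserve equality of distance multisets. Because $d_m(x,y) = \min d(x,y)$, any pair of networks with identical multisets of distances automatically has identical shortest distances, so a single construction will yield both statements of the lemma simultaneously.

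First I would carefully verify the base case exhibited in Figure~\ref{fig:level3}: two non-isomorphic level-$3$ networks on $X = \{a,b\}$ with the same multiset $d(a,b)$. This reduces to enumerating every simple $a$-to-$b$ path in each network and checking that the resulting multisets of path lengths coincide — a finite inspection. Since there are only two leaves, no further matrix entries need to be verified, so equality of the $d(a,b)$-multisets is enough for $\cD(N) = \cD(N')$, and hence also $\cD_m(N) = \cD_m(N')$.

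Next, to pass from level $3$ to an arbitrary level $k \geq 3$, I would modify the blob in an identical way in both networks: insert $k-3$ additional internal edges placed in the same position on both sides, so that each blob becomes level-$k$ but the two networks remain non-isomorphic and binary. Since the modification is applied in exactly the same way, there is a bijection between old $a$-to-$b$ paths and new ones that respects path lengths on both sides, and new paths created by the insertion are also in bijection across the two networks with the same lengths. Hence the multiset $d(a,b)$ is preserved after augmentation. Finally, to extend from $|X| = 2$ to arbitrary $|X| \geq 2$, I would apply the pendant-subtree trick used in the proof of Lemma~\ref{lem:L2NSD}: replace each of $a$ and $b$ in both networks by the same binary tree on disjoint sets of new leaves. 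Paths between two leaves in the same pendant subtree are identical in both networks; paths between leaves in different subtrees factor through an old $a$-to-$b$ path in the blob with a constant offset depending only on the depths in the two pendant subtrees. Hence $\cD$ (and so $\cD_m$) is preserved, giving non-isomorphic level-$k$ networks with identical distance matrices.

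The main obstacle is the first step — producing the explicit pair of level-$3$ blobs whose $a$-to-$b$ multisets coincide — which rests on a combinatorial coincidence rather than a general principle and so must be read off from the picture and verified by direct path enumeration. Once that base case is in hand, the blob-augmentation and pendant-subtree steps are essentially mechanical structural operations that manifestly preserve the full multiset of distances.
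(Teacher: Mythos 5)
Your proposal follows the paper's proof exactly: the base pair from Figure~\ref{fig:level3}, generalization of the level-$3$ blob to an arbitrary level-$k$ blob applied identically in both networks, and the pendant-subtree replacement to reach larger leaf sets. The only remark worth making is that the base case is not a `combinatorial coincidence' requiring path enumeration: the second network in Figure~\ref{fig:level3} is obtained from the first by exchanging the labels $a$ and $b$, so since $d(a,b)=d(b,a)$ is the only matrix entry, equality of the multisets is automatic, while the two networks are distinct because $a$ is attached to a level-$2$ blob in one and to a level-$3$ blob in the other.
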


\begin{figure}[h]
    \centering
    \resizebox{\columnwidth}{!}{
    \begin{tikzpicture}[every node/.style={draw, circle, fill, inner sep=0pt}]
     	\tikzset{edge/.style={very thick}}
        \begin{scope}[xshift=0cm,yshift=0cm]
        \draw[black, very thick] (0,0) circle (1.5);
        \draw[black, very thick] (-4,0) circle (1.5);

        \node[] (v1)    at (1.5,0)          {a};
        \node[] (v2)    at (0,1.5)          {a};
        \node[] (v3)    at (0,-1.5)         {a};
        \node[] (v4)    at (-1.5,0)         {a};
        \node[] (u1)    at (-2.5,0)         {a};
        \node[] (u2)    at (-3.25,1.3)      {a};
        \node[] (u3)    at (-3.25,-1.3)     {a};
        \node[] (u4)    at (-4.75,1.3)      {a};
        \node[] (u5)    at (-4.75,-1.3)     {a};
        \node[] (u6)    at (-5.5,0)         {a};
        
        \node[] (b)     at (2.5,0)          {a};
        \node[] (a)     at (-6.5,0)         {a};

        \node[draw = none, fill = none, right=1mm of b] (lb) {\large $b$};
        \node[draw = none, fill = none, left=1mm of a]  (la) {\large $a$};

        \draw[edge] (u6) -- (a);
        \draw[edge] (v2) -- (v3);
        \draw[edge] (u1) -- (v4);
        \draw[edge] (u2) -- (u3);
        \draw[edge] (u4) -- (u5);
        \draw[edge] (v1) -- (b);
        \end{scope}

        \begin{scope}[xshift=12cm,yshift=0cm]
        \draw[black, very thick] (0,0) circle (1.5);
        \draw[black, very thick] (-4,0) circle (1.5);

        \node[] (v1)    at (1.5,0)          {a};
        \node[] (v2)    at (0,1.5)          {a};
        \node[] (v3)    at (0,-1.5)         {a};
        \node[] (v4)    at (-1.5,0)         {a};
        \node[] (u1)    at (-2.5,0)         {a};
        \node[] (u2)    at (-3.25,1.3)      {a};
        \node[] (u3)    at (-3.25,-1.3)     {a};
        \node[] (u4)    at (-4.75,1.3)      {a};
        \node[] (u5)    at (-4.75,-1.3)     {a};
        \node[] (u6)    at (-5.5,0)         {a};
        
        \node[] (a)     at (2.5,0)          {a};
        \node[] (b)     at (-6.5,0)         {a};

        \node[draw = none, fill = none, right=1mm of a] (la) {\large $a$};
        \node[draw = none, fill = none, left=1mm of b]  (lb) {\large $b$};

        \draw[edge] (v1) -- (a);
        \draw[edge] (v2) -- (v3);
        \draw[edge] (u1) -- (v4);
        \draw[edge] (u2) -- (u3);
        \draw[edge] (u4) -- (u5);
        \draw[edge] (u6) -- (b);
        \end{scope}
    \end{tikzpicture}}
    \caption{Two level-3 networks that have the same shortest distances and the same multisets of distances between any pair of leaves.}
    \label{fig:level3}
\end{figure}
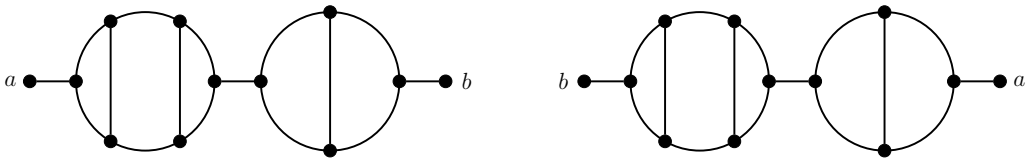

Therefore, networks of level higher than~$1$ are not reconstructible from their shortest distances in general; networks of level higher than~$2$ are not reconstructible from their multisets of distances in general.

%---------------------------------------------------------------
%------------------ Shortest Distances ------------------
%---------------------------------------------------------------

\section{Reconstructibility from shortest distances}\label{sec:ShortestDistance}
In this section we show that level-$1$ networks as well as level-$2$ networks on fewer than~$4$ leaves are reconstructible from their shortest distances.
We first look at level-$1$ networks.
Noting that pendant blobs contain exactly one chain, the following lemma shows how we can identify this chain from the shortest distances.

\begin{lemma}\label{lem:L1FindPendantChain}
    Let~$(a_1,\ldots,a_k)$ be a chain of length~$k\geq2$ in a level-1 network.
    Then~$(a_1,\ldots,a_k)$ is contained in a pendant blob if and only if~$d_m(a_1,x) = d_m(a_k,x)$ for all~$x\in X-\{a_1,\ldots,a_k\}$.
\end{lemma}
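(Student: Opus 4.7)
The plan is to exploit that each blob in a level-$1$ network is a simple cycle, and to translate the hypothesis and conclusion into statements about this cycle structure.

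For the forward implication, if the chain $(a_1,\ldots,a_k)$ sits inside a pendant blob~$B$, then $B$ has a unique attachment vertex~$u$ (the endpoint of the single non-trivial cut-edge leaving~$B$), and every other cycle-vertex of $B$ is the neighbour of a chain leaf. By maximality of the chain, the neighbours $p_1$ and $p_k$ are precisely the two cycle-neighbours of~$u$, so $d_m(a_1,u) = d_m(a_k,u) = 2$. Any $x \notin \{a_1,\ldots,a_k\}$ lies outside~$B$, and $u$ separates $B$ from the rest of the network, so shortest paths from $a_1$ and from $a_k$ to $x$ both factor through $u$ and equal $2 + d_m(u,x)$.

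For the reverse implication, I would argue the contrapositive, producing a distinguishing~$x$ in each remaining case. If the chain lives in a non-pendant blob~$B$, let $v_1$ and $v_k$ be the cycle-neighbours of $p_1$ and $p_k$ outside the chain; maximality of the chain forces each of them to carry a non-trivial cut-edge, and non-pendancy of $B$ gives $v_1 \neq v_k$. Picking $x$ to be any leaf reachable from $B$ via the cut-edge at $v_1$, both $d_m(a_1,x)$ and $d_m(a_k,x)$ factor through $v_1$; here $d_m(a_1,v_1) = 2$, while both cycle-arcs in $B$ from $p_k$ to $v_1$ have length at least~$2$ (using $k \geq 2$ and $v_1 \neq v_k$), so $d_m(a_k,v_1) \geq 3$. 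If instead all chain edges are cut-edges, a short degree count at $p_1$ forces the third edge $p_1 v_1$ to be a cut-edge as well (otherwise $p_1$ would sit on a cycle but have only one cycle-edge incident), and any leaf $x$ on the far side of $p_1 v_1$ (which exists because any connected level-$1$ subgraph must contain a leaf, by a block-tree argument) yields $d_m(a_1,x) = 2 + d_m(v_1,x)$ and $d_m(a_k,x) = (k+1) + d_m(v_1,x)$, differing by $k - 1 \geq 1$.

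The main obstacle is the structural bookkeeping in the reverse direction: one must carefully verify, using maximality of the chain together with the level-$1$ assumption, that the boundary vertices $v_1, v_k$ really are attachment vertices and that non-pendancy forces $v_1 \neq v_k$. Once these structural facts are in hand, the remaining distance inequalities reduce to straightforward shortest-path calculations along the cycle or the cut-edge path.
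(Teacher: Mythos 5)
Your proof is correct and takes essentially the same route as the paper's: the forward direction factors all shortest paths through the unique attachment vertex of the pendant blob, and the reverse direction is the same contrapositive split into the two cases (chain edges are cut-edges versus chain in a non-pendant blob) with the same choice of distinguishing leaf~$x$ reachable through the cut-edge at the boundary vertex next to~$p_1$. You merely make explicit some structural bookkeeping (e.g., that maximality forces $v_1$ and $v_k$ to carry non-trivial cut-edges and that non-pendancy gives $v_1\neq v_k$) that the paper leaves implicit.
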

\begin{proof}
    Suppose first that a chain~$(a_1,\ldots,a_k)$ is contained in a pendant blob $B$.
    Let~$p_1$ and~$p_k$ denote the neighbours of~$a_1$ and~$a_k$ respectively, and let~$q$ denote the common neighbour of~$p_1$ and~$p_k$.
    Let~$x\in X-\{a_1,\ldots,a_k\}$.
    Observe that any shortest path from~$x$ to a leaf contained in~$B$ must pass through the vertex~$q$.
    Therefore we have that
    \begin{equation*}
        d_m(a_1,x) = 2+d_m(q,x) = d_m(a_k,x).
    \end{equation*}
    
    To show the other direction, we prove the contrapositive.
    Suppose that $(a_1,\ldots, a_k)$ is not contained in a pendant blob.
    Then either the chain is incident to cut-edges, or the chain is contained in a non-pendant blob.
    Let~$p_i$ denote the neighbours of~$a_i$ for~$i\in[k]$, and let~$q$ denote the neighbour of~$p_1$ that is not~$a_1$ nor $p_2$.
    Suppose first that the chain is incident to cut-edges.
    Let~$x$ be a leaf in the network that is not on the chain, such that~$x$ is reachable from~$p_1$ via~$p_1q$.
    Then every path between~$x$ and~$a_k$ must pass through the vertices~$p_i$ for~$i\in[k]$, and therefore~$d_m(x,a_k) = d_m(x,a_1) + k-1$.
    Since~$k\geq2$, the equality in the statement of the theorem does not hold.
    
    So now consider the case that the chain is contained in a non-pendant blob.
    Then~$q$ is not a neighbour of~$p_k$; the path between~$q$ and~$p_k$ that does not contain the vertices~$\{p_1,\ldots,p_{k-1}\}$ contains at least three vertices.
    Now let~$x$ be a leaf not on the chain that can be reached from~$q$ via its incident non-trivial cut-edge.
    The shortest path from~$x$ to~$a_1$ and the shortest path from~$x$ to~$a_k$ both contain the shortest path from~$x$ to~$q$.
    By observing that the shortest path from~$q$ to~$a_1$ is shorter than the shortest path between~$q$ and~$a_k$, it follows that~$d_m(x,a_1) < d_m(x,a_k)$.
    Therefore the equality in the statement of the theorem does not hold.
\end{proof}

\begin{theorem}\label{thm:L1ReconstructibleShortPath}
Level-1 networks are reconstructible from their shortest distances.
\end{theorem}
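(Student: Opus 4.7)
The plan is to prove the theorem by strong induction on $|X|$. Base cases $|X|\le 3$ are handled by direct enumeration: every level-1 network on at most three leaves is either a tree or a triangle, and these are distinguished by whether $\mathcal{D}_m$ contains an off-diagonal entry equal to $2$. For the inductive step with $|X|\ge 4$, I would examine $\mathcal{D}_m$ and split according to whether some off-diagonal entry equals $2$.

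\emph{Cherry reduction.} Suppose $d_m(x,y) = 2$, so $\{x,y\}$ is a cherry. Let $N'$ be obtained from $N$ by deleting $y$ and suppressing the resulting degree-2 vertex; $N'$ is again a level-1 network, now on $|X|-1$ leaves. Its distance matrix is determined by $\mathcal{D}_m$: distances between pairs avoiding $\{x,y\}$ are unchanged, and $d_m^{N'}(x,v) = d_m(x,v)-1$ for $v\ne x,y$. By the inductive hypothesis $N'$ is reconstructible, after which reattaching $y$ as a cherry partner of $x$ recovers $N$. Any rival network $N^*$ realising $\mathcal{D}_m$ must also have $\{x,y\}$ as a cherry (since $d_m^{N^*}(x,y)=2$), so the same reduction applies to $N^*$ and the inductive hypothesis forces $N^*=N$.

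\emph{Pendant-blob reduction.} If no off-diagonal entry equals $2$ and $(a_1,\ldots,a_k)$ is a chain whose leaf set is a proper subset of $X$ and that satisfies the equality of Lemma~\ref{lem:L1FindPendantChain}, then the chain sits inside a pendant blob $B$ of $N$. Because $N$ is level-1 and has no cherries, every cycle vertex carries at most one leaf, so $B$ is a cycle on exactly $k+1$ vertices $p_1,\ldots,p_k,q$ in cyclic order, with leaf $a_i$ attached to $p_i$ and the unique non-trivial incident cut-edge $qv$ attached to $q$. I would then replace $B$ together with the cut-edge $qv$ by a single new leaf $z'$ attached to $v$, producing a level-1 network $N'$ on $(X\setminus\{a_1,\ldots,a_k\})\cup\{z'\}$. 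The matrix $\mathcal{D}_m(N')$ is computable from $\mathcal{D}_m$: distances between pairs outside the chain are unchanged, while $d_m^{N'}(z',w) = d_m(a_1,w)-2$ for any $w$ outside the chain, because the shortest $a_1$--$w$ path in $N$ traverses $a_1p_1q$ and then crosses $qv$. The inductive hypothesis reconstructs $N'$, and reattaching $B$ at $z'$ with chain order fixed by $\mathcal{D}_m$ recovers $N$.

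\emph{Single-cycle case and main obstacle.} A degenerate situation arises when no cherry exists and every maximal chain covers all of $X$: then by Lemma~\ref{lem:L1FindPendantChain} $N$ has no non-trivial cut-edge at all, so $N$ is a single cycle on $|X|$ leaves whose cyclic order of leaves is read off from the chain. The main obstacle is in the pendant-blob step, where one must argue that the structure of $B$ is pinned down by the chain alone (the no-cherry assumption forcing exactly one leaf per cycle vertex, and the pendant condition forcing a single cut-edge vertex $q$), and that the reduced matrix is indeed realised by $N'$; uniqueness of the whole procedure then follows because at every step the case distinction, the choice of cherry or chain, and the reduced matrix all depend only on $\mathcal{D}_m$, so any $N^*$ with the same shortest-distance matrix is driven through identical reductions and, by induction, must coincide with $N$.
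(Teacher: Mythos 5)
Your proposal is correct and follows essentially the same route as the paper's proof: reduce cherries (detected by $d_m=2$, adjusting distances by $-1$), identify pendant-blob chains via Lemma~\ref{lem:L1FindPendantChain} and collapse them to a leaf with the $-2$ adjustment, handle the single-cycle case separately, and recurse. Your explicit framing as strong induction on $|X|$, with the observation that any rival network realising the same matrix is driven through identical reductions, is a slightly more careful packaging of the same argument.
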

\begin{proof}
First we show that we can recognise cherries, reduce them and change the shortest distances accordingly. Note that as mentioned above, a pair of leaves forms a cherry precisely if their shortest distance is~2.
If there exists a cherry~$\{x,y\}$, we replace it by a leaf~$z$ and set $d_m(z,a) := d_m(x,a) - 1$ for all~$a\in X-\{x,y\}$. All other shortest distances between leaf-pairs remain unchanged.
After reconstructing the network from the modified distance matrix, we replace the leaf~$z$ by a cherry on~$\{x,y\}$.
So, without loss of generality, we assume from now on that there are no cherries.

We now consider the case that there is exactly one blob. 
Since there are no cherries, all leaves are contained in this blob.
We can recognize this by seeing that there is a chain $(a_1,\ldots, a_k)$ of length~$k\geq 3$ that satisfies $d_m(a_1,a_k)=3$. This immediately shows how to reconstruct level-$1$ networks that contain exactly one blob. Hence, we assume from now on that there are at least two blobs.

Note that pendant blobs must contain a chain of length at least~$2$ since networks do not contain parallel edges.
By Lemma~\ref{lem:L1FindPendantChain}, we can find chains on pendant blobs.
We reduce a chain~$(a_1,\ldots,a_k)$ contained in a pendant blob by replacing the blob by a leaf~$z$ and setting $d_m(x,z) := d_m(x,a_1) - 2$ for all~$x\in X - 
\{a_1,\ldots ,a_k\}$.
All shortest distances between other leaf-pairs remain unchanged, since their paths do not travel through pendant blobs.
It is again easy to reconstruct the blob after reconstructing the reduced network, since we know that $(a_1,\ldots ,a_k)$ must form a chain on the blob, in that order.

This finishes the proof of the theorem since any level-1 network has a cherry, a pendant blob, or exactly one blob.
\end{proof}

We note that the restriction of Theorem~\ref{thm:L1ReconstructibleShortPath} to networks without triangles also follows from Theorem~5 of~\cite{hayamizu2020recognizing}.
We give the proof above to account for the triangle case and to give a more direct graph-theoretical proof that is independent of the results provided by Hayamizu et al..
Observe that trees (level-$0$ networks) are also level-$1$ networks.
Thus Theorem~\ref{thm:L1ReconstructibleShortPath} gives the following corollary, which we include here for completeness.
This is a classical result that was proven in~\cite{hakimi1965distance}.
\begin{corollary}
    Trees are reconstructible from their shortest distances.
\end{corollary}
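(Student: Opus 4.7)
The plan is to deduce the corollary as an essentially immediate consequence of Theorem~\ref{thm:L1ReconstructibleShortPath}. The key observation is that a tree contains no cycles and hence no blobs at all, so the condition ``at most $k$ edges must be deleted from every blob to obtain a tree'' is vacuously satisfied for $k=0$, and in particular for $k=1$. Thus every phylogenetic tree on $X$ is a level-$1$ network on $X$. By Theorem~\ref{thm:L1ReconstructibleShortPath}, a tree $T$ is the unique level-$1$ network realising $\cD_m(T)$; since any other tree on $X$ is also a level-$1$ network on $X$, no tree other than $T$ can realise $\cD_m(T)$, which is exactly the definition of reconstructibility from shortest distances in the class of trees.

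If a more self-contained argument is wanted, one can specialise the proof of Theorem~\ref{thm:L1ReconstructibleShortPath} to the tree case, where it collapses to the classical cherry-reduction scheme. Pairs of leaves with $d_m(x,y)=2$ are exactly the cherries of $T$, and every tree on three or more leaves contains at least one cherry. One identifies a cherry $\{x,y\}$ from the distance matrix, replaces it with a single leaf $z$, updates $d_m(z,a):=d_m(x,a)-1$ for every remaining leaf $a$, and recurses. The base cases $|X|\in\{1,2\}$ are trivially reconstructible, so induction on $|X|$ completes the argument. The pendant-blob and single-blob branches in the proof of Theorem~\ref{thm:L1ReconstructibleShortPath} never occur here because $T$ has no blobs.

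There is no genuine obstacle: the corollary is a direct specialisation, and the only minor point to verify is that the ambient class of ``candidate realisers'' one quantifies over contains all trees (so that uniqueness among level-$1$ networks implies uniqueness among trees), which is exactly what the inclusion trees $\subseteq$ level-$1$ networks gives. Hence the corollary follows with no additional work.
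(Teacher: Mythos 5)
Your argument is correct and is exactly the paper's: the paper deduces the corollary by observing that trees (level-$0$ networks) are level-$1$ networks and invoking Theorem~\ref{thm:L1ReconstructibleShortPath}. Your supplementary cherry-reduction sketch is also sound (it is the classical argument of Hakimi and Yau, which the paper cites), but it is not needed beyond the one-line specialisation.
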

Next, we show that level-$2$ networks on fewer than~$4$ leaves are also reconstructible from their shortest distances.

\begin{lemma}\label{lem:L2<3ShortestRecon}
    Level-2 networks on~$X$ for~$|X|\leq 3$ are reconstructible from their shortest distances.
\end{lemma}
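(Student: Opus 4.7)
The plan is to proceed by case analysis on $|X| \in \{1, 2, 3\}$, since for such small leaf sets the set of non-isomorphic level-$2$ networks is finite and enumerable.

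The case $|X| = 1$ is immediate: the only network is the singleton graph. For $|X| = 2$, write $X = \{x, y\}$. I would first argue that $N$ has a ``chain-of-blobs'' macrostructure (a sequence of $n \geq 0$ blobs connected by cut-edges, with $x$ and $y$ at the two ends), since with only two leaves no branching in the macrostructure is possible: any internal tree vertex, being of degree $3$, would force a branch point and hence at least three leaves. A short degree count then shows each blob in the chain has exactly two external attachments, and the only simple biconnected graph of level at most $2$ with this profile is $K_4$ minus an edge, with its attachments at the two degree-$2$ vertices. Each such blob contributes shortest distance $2$ between its attachment vertices, so $d_m(x, y) = 3n + 1$, which uniquely determines $n$ and the network.

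For $|X| = 3$, I would first test for a cherry $\{a, b\} \subset X$ (i.e., $d_m(a, b) = 2$). If one exists, I reduce the cherry to a single leaf $z$, update $d_m(c, z) := d_m(c, a) - 1$ for the third leaf $c$, apply the $|X| = 2$ case to the reduced level-$2$ network, and restore the cherry. If no cherry exists, the three leaves have pairwise distinct parents, and the macrostructure is a tree with three leaves and a unique branching point -- either an internal tree vertex or a blob whose generator is the triangle (level $1$), $K_{2,3}$, or $C_5$ with a chord (level $2$) -- together with parameters $(k_x, k_y, k_z) \geq 0$ counting the number of intermediate blobs (each being $K_4$ minus an edge) on each of the three branches. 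For each such network the shortest distance matrix is computed via a straightforward formula, and the residues modulo~$3$ of its entries distinguish the type of branching point (all entries $\equiv 0$, $1$, or $2$, respectively, or a mixed $(0, 1, 1)$ pattern), after which the branch parameters $(k_x, k_y, k_z)$ are recovered from simple linear combinations of the entries. The main obstacle will be precisely this no-cherry subcase for $|X| = 3$: carefully listing all non-isomorphic configurations and verifying that their shortest distance matrices are pairwise distinct, although the modulo-$3$ classification above streamlines the verification.
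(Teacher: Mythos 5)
Your proposal is correct and follows essentially the same route as the paper: the $3n+1$ chain-of-blobs argument for two leaves, and for three leaves the spider-shaped blob-tree whose central branching structure (internal vertex, triangle, $K_{2,3}$, or $C_5$ plus a chord) is identified from the pairwise shortest distances taken modulo~$3$, after which the branch parameters are recovered. The only cosmetic differences are that you reduce cherries to the two-leaf case rather than folding them into the mod-$3$ analysis, and your ``respectively'' assignment of residues to branching types appears permuted (the correct patterns are $(2,2,2)$, $(0,0,0)$, $(1,1,1)$, and $(0,1,1)$ for the four types in the order listed), which does not affect the substance of the argument.
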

\begin{proof}
    There can only be one network on a single taxon, namely the singleton graph. Such a graph is trivially reconstructible from its shortest distances.
    So suppose that~$|X| = 2$, say~$X = \{x,y\}$, and let~$N$ be a network on~$X$.
    Below, we will prove the claim that~$N$ consists only of level-$2$ blobs, where each level-$2$ blob is incident to exactly two cut-edges.
    In particular,~$N$ contains at most two pendant blobs, one of which contains the neighbour of~$x$ and the other the neighbour of~$y$.
    Since each additional level-$2$ blob increases the shortest distance between~$x$ and~$y$ by~$3$, it follows that~$d_m(x,y) = 3k+1$ where~$k$ denotes the number of level-$2$ blobs in~$N$.
    From there, it follows that~$N$ is reconstructible from its shortest distances.
    
    We now prove the claim.
    Note first that every blob in~$N$ must be incident to exactly two cut-edges.
    A blob cannot be incident to only one cut-edge. If the blob is level-$1$ then this would imply that it contains a loop; if the blob is level-$2$ then this would imply that it contains parallel edges.
    This also implies that every pendant blob must be incident to at least one trivial cut-edge.
    On the other hand if a blob is incident to more than two cut-edges, say~$c$ cut-edges, then this would imply that the network contains at least~$c$ pendant blobs.
    Since every pendant blob must be incident to at least one trivial cut-edge, this implies that the network contains at least~$c>2$ leaves, which is a contradiction.
    Therefore every blob in~$N$ must be incident to exactly two cut-edges.
    Now observe that a level-$1$ blob that is incident to exactly two cut-edges contains parallel edges.
    It follows that every blob in~$N$ must be a level-$2$ blob that is incident to exactly two cut-edges.
    This proves the claim, from which it follows by the argument presented above that~$N$ is reconstructible from its shortest distances for~$|X|=2$.
    
    Suppose now that~$|X|=3$, and let~$X = \{x,y,z\}$.
    Here we consider $BT(N)$, the \emph{blob-tree} of~$N$, which is obtained from~$N$ by replacing each blob of~$N$ by a single vertex.
    Since~$|X|=3$,~$BT(N)$ contains exactly one vertex of degree-$3$, three vertices of degree-$1$ (which are the leaves~$x,y,$ and~$z$), and all other vertices are of degree-$2$.
    By a similar argument as presented in the~$|X|=2$ case, the degree-$2$ vertices of~$BT(N)$ correspond to level-$2$ blobs.
    The degree-$3$ vertex could be an internal vertex of the network, a level-$1$ blob, or a level-$2$ blob.
    In the case that it is a level-$2$ blob, there are two possibilities.
    Either the three edges are incident to different sides of the blob, or two edges are incident to the same side of the blob and the third edge to another side.
    See Figure~\ref{fig:L2<3ShortestRecon} for these four possibilities.
    Observe that these four possibilities all contribute different distance lengths to inter-taxa distances.
    In particular, we have that the degree-$3$ vertex is a (an)
    \begin{itemize}
        \item internal vertex if and only if
        \[(d(x,y), d(y,z), d(x,z)) = (2(\bmod{3}),2(\bmod{3}),2(\bmod{3}));\]
        \item level-$1$ blob if and only if
        \[(d(x,y), d(y,z), d(x,z)) = (0(\bmod{3}),0(\bmod{3}),0(\bmod{3}));\]
        \item level-$2$ blob with all edges on different sides if and only if
        \[(d(x,y), d(y,z), d(x,z)) = (1(\bmod{3}),1(\bmod{3}),1(\bmod{3}));\]
        \item level-$2$ blob with the two edges that lead to leaves~$x$ and~$y$ on the same side if and only if
        \[(d(x,y), d(y,z), d(x,z)) = (0(\bmod{3}),1(\bmod{3}),1(\bmod{3})).\]
    \end{itemize}
    Therefore we may identify the blob corresponding to the degree-$3$ vertex of the blob-tree by taking the distances modulo~$3$.
    
    To finish the proof, take two networks~$N,N'$ with the same shortest distance matrix.
    By the previous paragraph, we may assume that~$N$ and~$N'$ have the same blob corresponding to the degree-$3$ vertex of their blob-trees.
    Assume that~$N\neq N'$.
    Then the two blob-trees~$BT(N)$ and~$BT(N')$ are different.
    Note that the shortest distances are determined by the number of degree-$2$ vertices between leaves in the blob-tree.
    Since~$\cD_m(N) = \cD_m(N')$, we have that the number of degree-$2$ vertices between two leaves, say~$x$ and~$y$, is the same in both~$BT(N)$ and~$BT(N')$.
    However since~$BT(N)$ differs from~$BT(N')$, the positioning of the degree-$3$ vertex must differ.
    But this would imply that upon placing~$z$ together with some degree-$2$ vertices, we can only satisfy one of~$d^N_m(x,z) = d^{N'}_m(x,z)$ or~$d^N_m(y,z) = d^{N'}_m(y,z)$.
    This contradicts the assumption that~$\cD_m(N) = \cD_m(N')$.
    Therefore we must have~$N=N'$, and that level-$2$ networks on~$X$ for~$|X|=3$ are reconstructible from their shortest distances.
\end{proof}

\begin{figure}
    \begin{subfigure}{.25\textwidth}
    \centering
        \begin{tikzpicture}[every node/.style={draw, circle, fill, inner sep=0pt}]
         	\tikzset{edge/.style={very thick}}
         	
         	\node[] (origin)    at (0,0)    {a};
         	\node[] (x)         at (180:1)  {a};
         	\node[] (y)         at (60:1)   {a};
         	\node[] (z)         at (300:1)  {a};
         	
            \node[draw = none, fill = none, left=1mm of x]  (lx) {\large $x$};
            \node[draw = none, fill = none, right=1mm of y]  (ly) {\large $y$};
            \node[draw = none, fill = none, right=1mm of z]  (lz) {\large $z$};
            
            \draw[edge, dashed] (x) -- (origin);
            \draw[edge, dashed] (y) -- (origin);
            \draw[edge, dashed] (z) -- (origin);
        \end{tikzpicture}
    \caption{}
    \end{subfigure}\hfill
    \begin{subfigure}{.25\textwidth}
    \centering
        \begin{tikzpicture}[every node/.style={draw, circle, fill, inner sep=0pt}]
         	\tikzset{edge/.style={very thick}}
         	
         	\draw[black, very thick] (0,0) circle (0.4);
         	
         	\node[] (bx)        at (180:0.4)    {a};
         	\node[] (by)        at (60:0.4)     {a};
         	\node[] (bz)        at (300:0.4)    {a};
         	
         	\node[] (x)         at (180:1)  {a};
         	\node[] (y)         at (60:1)   {a};
         	\node[] (z)         at (300:1)  {a};
         	
            \node[draw = none, fill = none, left=1mm of x]  (lx) {\large $x$};
            \node[draw = none, fill = none, right=1mm of y]  (ly) {\large $y$};
            \node[draw = none, fill = none, right=1mm of z]  (lz) {\large $z$};
         	
         	\draw[edge, dashed] (bx) -- (x);
         	\draw[edge, dashed] (by) -- (y);
         	\draw[edge, dashed] (bz) -- (z);
         
        \end{tikzpicture}
    \caption{}
    \end{subfigure}\hfill
    \begin{subfigure}{.25\textwidth}
    \centering
        \begin{tikzpicture}[every node/.style={draw, circle, fill, inner sep=0pt}]
         	\tikzset{edge/.style={very thick}}
         	
         	\draw[black, very thick] (0,0) circle (1);
         	
         	\node[] (origin)    at (0,0)        {a};
         	\node[] (north)     at (90:1)       {a};
         	\node[] (south)     at (270:1)      {a};
         	
         	\node[] (bx)        at (180:1)      {a};
         	\node[] (bz)        at (0:1)        {a};
         	
         	\node[] (x)         at (-1.5,0)     {a};
         	\node[] (y)         at (-0.5,0)     {a};
         	\node[] (z)         at (0.5,0)      {a};
         	
            \node[draw = none, fill = none, left=1mm of x]  (lx) {\large $x$};
            \node[draw = none, fill = none, below=1mm of y]  (ly) {\large $y$};
            \node[draw = none, fill = none, below=1mm of z]  (lz) {\large $z$};
         	
         	\draw[edge] (north) -- (south);
         	
         	\draw[edge, dashed] (bx) -- (x);
         	\draw[edge, dashed] (origin) -- (y);
         	\draw[edge, dashed] (bz) -- (z);
         
        \end{tikzpicture}
    \caption{}
    \end{subfigure}\hfill
    \begin{subfigure}{.25\textwidth}
    \centering
        \begin{tikzpicture}[every node/.style={draw, circle, fill, inner sep=0pt}]
         	\tikzset{edge/.style={very thick}}
         	
         	\draw[black, very thick] (0,0) circle (1);
         	
         	\node[] (north)     at (90:1)   {a};
         	\node[] (south)     at (270:1)  {a};
         	
         	\node[] (bx)        at (180:1)  {a};
         	\node[] (by)        at (30:1)   {a};
         	\node[] (bz)        at (330:1)  {a};
         	
         	\node[] (x)         at (180:0.5){a};
         	\node[right = 5mm of by] (y)    {a};
         	\node[right = 5mm of bz] (z)    {a};
         	
            \node[draw = none, fill = none, below=1mm of x]  (lx) {\large $x$};
            \node[draw = none, fill = none, right=1mm of y]  (ly) {\large $y$};
            \node[draw = none, fill = none, right=1mm of z]  (lz) {\large $z$};
         	
         	\draw[edge] (north) -- (south);
         	
         	\draw[edge, dashed] (bx) -- (x);
         	\draw[edge, dashed] (by) -- (y);
         	\draw[edge, dashed] (bz) -- (z);
         
        \end{tikzpicture}
    \caption{}
    \end{subfigure}
    \caption{The four possible degree-$3$ vertices in the blob-tree of a level-$2$ network on three leaves~$\{x,y,z\}$.
    (a) An internal vertex.
    (b) A level-$1$ blob.
    (c) A level-$2$ blob with all leaves reachable from different sides of the blob.
    (d) A level-$2$ blob where~$y$ and~$z$ are reachable from the same side of the blob.
    The dashed lines can be replaced by paths that contain any number of level-$2$ blobs.
    This is possible because we take the distances modulo~$3$ and since each additional level-$2$ blob contributes an extra length-$3$ to the shortest inter-taxa distance.}
    \label{fig:L2<3ShortestRecon}
\end{figure}
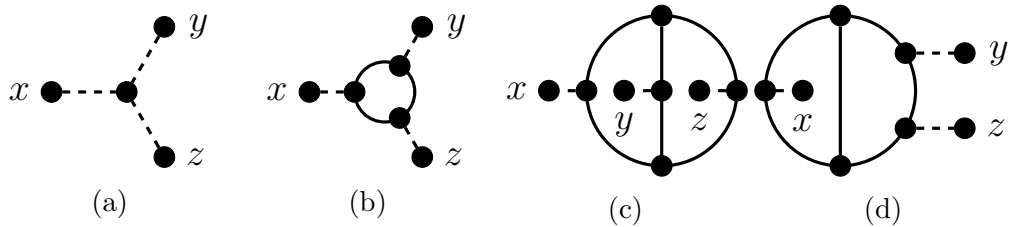

%------------------------------------------------------------------
%------------------ multisets of distances ------------------
%------------------------------------------------------------------

\section{Reconstructibility of level-2 networks from their multisets of distances}\label{sec:Multiset}

In the last two sections, we showed that level-$1$ networks are reconstructible from their shortest distances, level-$k$ networks for~$k\geq 2$ are in general not reconstructible from their shortest distances, and level-$k$ networks for~$k\ge3$ are in general not reconstructible from their multisets of distances.
In this section, we investigate the remaining case, and show that level-$2$ networks are reconstructible from their multisets of distances.
The main theorem is the following.

\begin{theorem}\label{thm:L2ReconstructibleMultiset}
    Level-2 networks are reconstructible from their multisets of distances.
\end{theorem}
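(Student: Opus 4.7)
The plan is to prove the theorem by induction on the number of leaves, reducing the multiset distance matrix by peeling off a pendant structure at each step, exactly as in the shortest-distance level-$1$ argument of Theorem~\ref{thm:L1ReconstructibleShortPath}. The skeleton will be: (i)~identify a pendant structure (cherry, pendant level-$1$ blob, or pendant level-$2$ blob) from $\mathcal{D}(N)$ alone; (ii)~determine its exact topology; (iii)~contract it to a single leaf and update $\mathcal{D}$ in a canonical way; (iv)~recursively reconstruct the smaller network, which is unique by the induction hypothesis; (v)~re-expand the pendant structure. The base case follows from Lemma~\ref{lem:L2<3ShortestRecon} together with a direct check of single-blob configurations on a handful of leaves.

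First I would handle cherries exactly as in the level-$1$ proof: the pair $\{x,y\}$ is a cherry iff $d(x,y)=\{2\}$, and contracting sets $d(z,a):=d(x,a)-1$ for all $a\notin\{x,y\}$. Once there are no cherries, I partition the leaves into chains using the equivalence-closure of $d_m(a,b)=3$ and recognize those chains that sit on a pendant blob via a multiset analogue of Lemma~\ref{lem:L1FindPendantChain}: the chain endpoints have \emph{identical} multiset distances to every leaf outside the blob (since every path to an outside leaf exits through the unique incident cut-edge). This lets me decide which blobs are pendant, and among those, which are level-$1$ (each such blob carries exactly one chain whose self-multisets have exactly two elements, from which cycle length and chain length are read off) versus level-$2$ (richer multiset structure).

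The main obstacle is reconstructing a pendant level-$2$ blob. Its generator is the theta graph: two degree-$3$ vertices $u,v$ joined by three internally disjoint sides of lengths $\alpha,\beta,\gamma$, one of which carries the incident non-trivial cut-edge. Chains and individual leaves on the blob must be assigned to sides and positioned on them. The key observation I would exploit is a multiplicity test: for two leaves $x,y$ on the same side there are three simple $x$--$y$ paths in the blob (one direct, one around via each of the other two sides), whereas for two leaves on different sides there are four. Thus $|d(x,y)|$ (counted with multiplicity) distinguishes same-side from cross-side pairs, and this partitions the leaves on the blob into at most three classes corresponding to the three sides. Within a class the direct-path length totally orders the leaves, and the two "around" path lengths in each same-side multiset, together with a few cross-side multisets, determine the side lengths $\alpha,\beta,\gamma$ and the exact positions of all attachment points. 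I would then verify consistency: once candidate parameters are computed, every multiset $d(x,y)$ predicted by the reconstruction must agree with the input, and uniqueness of the solution to the resulting system gives uniqueness of the blob.

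After the blob is determined, I contract it to a single leaf $z$ and update distances by the canonical formula $d(z,a)=d(a_1,a)-(\text{distance from $a_1$ to the cut-edge endpoint inside the blob})$ for all outside leaves $a$, where $a_1$ is any fixed leaf on the blob; the update is well-defined because all outside-bound paths share the unique cut-edge and the in-blob geometry is now known. The hard parts I anticipate are (a)~degenerate cases where the blob carries very few leaves (one or two), so the multiplicity test leaves ambiguities: these will need a small case analysis exploiting the way the three sides connect to the cut-edge, and (b)~networks consisting of a single level-$2$ blob with no pendant blob to peel, handled by showing that a single-blob level-$2$ network is directly reconstructible because the total multiset distances rigidly pin down $\alpha,\beta,\gamma$ and all leaf positions up to the (trivial) automorphisms of the generator that preserve labelings. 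Combining the reduction step with these base cases closes the induction and yields the theorem.
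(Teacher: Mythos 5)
Your overall architecture (reduce cherries, peel off pendant blobs, recurse, handle the single-blob base case) is the same as the paper's, and your multiplicity test for same-side versus cross-side leaves on a theta graph is sound as far as it goes. But there is a genuine gap at the step you lean on most heavily: the ``multiset analogue of Lemma~\ref{lem:L1FindPendantChain}.'' It is false that the endpoints of a chain contained in a pendant \emph{level-$2$} blob have identical multiset (or even shortest) distances to every outside leaf. Every path to an outside leaf does exit through the unique cut-edge endpoint $q$, so $d(a_1,x)$ is the Minkowski sum of $d(a_1,q)$ with the multiset of paths from $q$ to $x$; but $d(a_1,q)\neq d(a_k,q)$ in general. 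Concretely, take a theta graph on $u,v$ with one side $u\,p_1\,p_2\,v$ carrying the chain $(a_1,a_2)$, a second side $u\,q\,r\,v$ with $q$ incident to the non-trivial cut-edge and $r$ to a leaf, and the third side the edge $uv$. Then $d(a_1,q)=\{3,5^3\}$ while $d(a_2,q)=\{4^3,6\}$, so $d(a_1,x)\neq d(a_2,x)$ for every outside leaf $x$. Your test therefore fails to detect this pendant blob, and this is precisely why the paper cannot get away with an endpoint-symmetry criterion: it instead classifies pendant level-$2$ blobs into six forms $(k,\ell,m,n)$ and identifies them case by case (Lemmas~\ref{lem:L2NetPendantL2BlobChain}--\ref{lem:L2NetPendantL2BlobTwoChains}) and, for the remaining forms, via the chain-adjacency graph (Theorem~\ref{thm:CAG}). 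That machinery is the real content of the theorem and your proposal assumes it away.

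Two further points. First, your distance update after contracting a pendant blob is wrong as written: subtracting a scalar from $d(a_1,a)$ preserves its cardinality, whereas $d(z,a)$ must have only as many elements as there are paths from $q$ to $a$; the correct operation is to split $d(a_1,a)$ uniquely into equal-sized translates of one another (one per in-blob path length from $a_1$ to $q$) and keep one translate shifted appropriately, as in Lemmas~\ref{lem:L2NetPendantL1BlobAdjust} and~\ref{lem:L2NetPendantL2BlobChainAdjust}. Second, after cherry reduction a level-$2$ network may still have leaves attached to cut-edges between blobs; these are neither cherries nor parts of pendant blobs, and your reduction scheme never removes them. The paper needs Lemmas~\ref{lem:LeafBlobAdjacency} and~\ref{lem:LeafBlobAdjacencyAdjust} to detect, delete, and uniquely reattach such leaves before chain adjacency can be equated with co-membership in a blob.
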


The key ideas in proving the theorem are as follows.
We first identify and reduce all cherries of the network.
To identify cherries we observe that two leaves~$x$ and~$y$ form a cherry if and only if~$d(x,y) = \{2\}$.
To reduce cherries we replace it by a new leaf~$z$ and adjust the distance matrix accordingly, as done for the level-$1$ networks in the proof of Theorem~\ref{thm:L1ReconstructibleShortPath}.
Next, we identify all leaves that are not contained in blobs, delete those leaves, and adjust the distance matrix accordingly.
We show that each leaf that is deleted in this manner can be reattached to the reduced network in a unique fashion.
After applying these two reductions, two chains are adjacent if and only if they are contained in the same blob.
Using this observation, we then show that it is possible to identify pendant blobs, replace them by a new leaf, and adjust the distance matrix accordingly.
Continuing in this fashion, we eventually reach the situation when the reduced network contains exactly one blob. 
We show that networks on single blobs are reconstructible from their multisets of distances, at which point it follows that simply reversing the reduction steps taken yields the original network.

We start with the two easy cases, when the network contains a cherry or a single blob.
\begin{observation}\label{obs:Cherry}
    Let~$N$ be a level-$2$ network on~$X$ and suppose that leaves~$x$ and~$y$ form a cherry in~$N$.
    Upon replacing the cherry by a leaf~$z$, we obtain a network~$N'$ on~$X'=X\cup\{z\}-\{x,y\}$ such that the multisets of distances for~$N'$ contains the elements
    \[ d^{N'}(a,b) = 
    \begin{cases} 
        d^N(a,b)      & \text{if }a,b\in X-\{x,y\} \\
        d^N(a,x) - 1  & \text{if }a\in X-\{x,y\} \text{ and }b = z.
    \end{cases}
    \]
    One may obtain~$N$ from~$N'$ by replacing the leaf~$z$ by a cherry~$\{x,y\}$.
\end{observation}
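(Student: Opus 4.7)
The plan is to verify directly that $N'$, constructed by removing $x$ and $y$ from $N$ and relabeling their former common neighbor, is a valid phylogenetic network on $X'$ of level at most~$2$ whose distance multisets satisfy the claimed identities, and then to check that the reverse operation is unambiguous. Let $p$ denote the common neighbor of $x$ and $y$, and let $q$ denote the third neighbor of $p$, which exists and is unique because $N$ is binary. Replacing the cherry by $z$ amounts to removing the leaves $x, y$ along with the edges $px, py$, after which $p$ has degree one; we then relabel $p$ as $z$, yielding a leaf labeled $z$ adjacent to $q$. Since $x$ and $y$ are leaves, the edges $px$ and $py$ are trivial cut-edges and hence do not lie on any cycle, so $p$ is not a vertex of any blob of $N$. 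Therefore the blobs of $N'$ coincide vertex-for-vertex and edge-for-edge with those of $N$, the degree conditions on internal vertices are preserved, and $N'$ is a binary phylogenetic network on $X'$ of level at most~$2$.

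The distance identities rest on a single observation about simple paths: the vertex $p$ cannot be an internal vertex of any simple path in $N$ whose endpoints lie in $X - \{x, y\}$. Indeed, any simple path through $p$ must enter via one neighbor and leave via another; since $p$'s neighbors are $q, x, y$ and both $x, y$ have degree one, any such path must enter through $q$ and then terminate at $x$ or $y$. Consequently, for $a, b \in X - \{x, y\}$, every simple $a$-to-$b$ path in $N$ avoids $p, x, y$ entirely, and hence lies in the subgraph $G := N - \{x, y, p\}$, which is identical to $N' - \{z\}$. The analogous statement holds in $N'$, so the simple $a$-to-$b$ paths coincide set-for-set and length-for-length, giving $d^{N'}(a, b) = d^N(a, b)$.

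For the mixed case, fix $a \in X - \{x, y\}$. Every simple $a$-to-$x$ path in $N$ must terminate with the two edges $qp, px$: by the path observation applied to its $a$-to-$p$ prefix, the vertex $p$ is visited only at the penultimate step and is entered from $q$. This yields a length-plus-two bijection between simple $a$-to-$x$ paths in $N$ and simple $a$-to-$q$ paths in $G$. Identically, simple $a$-to-$z$ paths in $N'$ are in length-plus-one bijection with simple $a$-to-$q$ paths in $G$ via the single appended edge $qz$. Composing these two bijections, the multisets of path-lengths satisfy $d^{N'}(a, z) = d^N(a, x) - 1$ in the multiset-subtraction sense defined in the Preliminaries.

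To reverse the reduction, take the leaf $z$ of $N'$, relabel it as an internal vertex $p$, and attach two new leaves $x, y$ to $p$. This restores the three vertices $x, y, p$ and the edges $px, py$ while preserving all other edges, so the result is precisely $N$. The only delicate point in the whole argument is the observation that $p$ acts as a dead-end for any simple path not terminating in the cherry; once this is in hand, both distance identities and the reverse construction follow immediately.
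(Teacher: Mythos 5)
Your proof is correct. The paper states this result as an Observation without proof, and your argument — showing that $p$ is a dead-end for any simple path not terminating at $x$ or $y$, and then exhibiting the length-shifting bijections between path sets — is exactly the direct verification the authors evidently had in mind, with the details (including why $N'$ remains a valid level-$2$ network and why the reduction is reversible) properly filled in.
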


\begin{lemma}\label{lem:L2 1BSD}
    Level-$2$ networks containing a single blob are reconstructible from their shortest distances.
\end{lemma}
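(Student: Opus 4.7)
The plan is to proceed by induction on $|X|$, with Lemma~\ref{lem:L2<3ShortestRecon} supplying the base cases $|X|\leq 3$. For the inductive step I would first reduce cherries: a cherry $\{x,y\}$ is detected by $d_m(x,y) = 2$, and I would replace it by a single leaf $z$ with $d_m(z,a) := d_m(x,a) - 1$ for all other leaves $a$. This reduction preserves the single-blob, level-$2$ property, so the inductive hypothesis reconstructs the reduced network; re-expanding $z$ into $\{x,y\}$ then recovers $N$. So I may assume $N$ is cherry-free.

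Under this assumption the topology of $N$ is rigid. Since $N$ is binary with a single blob, any pendant subtree on $\geq 2$ leaves would itself contain a cherry, so every leaf of $N$ must be attached directly to a subdivision vertex of the blob. Because the blob is level-$2$, its generator is the unique cubic $2$-connected multigraph of cycle rank~$2$: the theta graph on two hub vertices $u,v$ joined by three internally disjoint sides $S_1,S_2,S_3$ of lengths $\ell_1,\ell_2,\ell_3$. With every subdivision vertex carrying exactly one leaf, side $S_i$ contains a chain of $n_i = \ell_i - 1$ leaves in the order induced by the positions.

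To recover this combinatorial data from $\cD_m(N)$, I would use the fact that two leaves satisfy $d_m = 3$ precisely when they are consecutive on the same side; a direct check shows that the smallest possible inter-side distance is $4$, attained by two leaves adjacent to a common hub. This identifies the (at most three) chains as the connected components of the leaf-graph with edges given by $d_m = 3$, and their lengths deliver the $\ell_i$. Next, I would fix the natural order of one non-trivial chain, say $A$, and express each cross-chain distance $d_m(a_i,b_j)$ as the minimum over the (at most) four routes through the theta between $a_i$'s neighbour and $b_j$'s neighbour; this minimum is a known function of $i$, the position of $b_j$ from $u$, and the three side lengths. Since the side lengths are already determined, a pair of well-chosen cross-chain distances pins down the position, and hence the orientation, of each remaining chain. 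The residual ambiguity is exactly the hub-swap $u\leftrightarrow v$, which reverses all chain orientations simultaneously and yields an isomorphic network.

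The main obstacle I anticipate is the degenerate regime: chains of length $0$ or $1$ admit no intrinsic orientation, coincidences among the $\ell_i$ introduce side-permutation symmetries, and in boundary cases the routes in the $\min$ expression can be simultaneously attained, leaving the position equations under-determined. I would address these by a dedicated case analysis, showing that each resulting ambiguity corresponds to a genuine automorphism of the theta graph (hub swap, side relabelling, or both) and therefore to an isomorphism of $N$. Once uniqueness is established in the cherry-free case, reversing the cherry reductions completes the induction and proves the lemma.
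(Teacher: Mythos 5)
Your proposal follows essentially the same route as the paper: reduce cherries, observe that all leaves then sit directly on the blob, identify the chains as the components of the $d_m=3$ relation, and place them on the sides of the theta-shaped generator by matching chain endpoints at shortest distance $4$ (the paper states this last step in one line, whereas you spell out the orientation/degeneracy analysis that it leaves implicit). The one case you skip is that the single blob of a level-$2$ network may itself be level-$1$ (a cycle), where the generator is not the theta graph; the paper dispatches this by invoking Theorem~\ref{thm:L1ReconstructibleShortPath}, and your $d_m=3$ adjacency argument recovers the cyclic leaf order there just as easily, so this is a small omission rather than a flaw in the method.
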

\begin{proof}
    Let~$N$ be a level-$2$ network containing a single blob.
    Assume without loss of generality that~$N$ contains no cherries, as we can recognize them from the shortest distances and reduce them by Observation~\ref{obs:Cherry}.
    If~$N$ is a level-$1$ blob then we may reconstruct it from shortest distances by Theorem~\ref{thm:L1ReconstructibleShortPath}.
    If~$N$ is a level-$2$ blob then the blob must contain at least two chains since it has no parallel edges, and at most three chains.
    Noting that chains can be identified from the shortest distances, the placement of the chains on the blob sides can be done by matching the end-leaves of chains that have shortest distance~$4$.
\end{proof}

\subsection{Leaves not contained in blobs}

\begin{lemma}\label{lem:LeafBlobAdjacency}
    Let~$N$ be a level-$2$ network on~$X$ where~$|X|\geq3$.
    A leaf~$x$ is not contained in a blob if and only if there exists a unique partition~$Y\cup Z$ of~$X-\{x\}$ such that~$Y,Z\neq \emptyset$ and~$d_m(y,z) = d_m(x,y) + d_m(x,z) - 2$ for all~$y\in Y$ and~$z\in Z$.
\end{lemma}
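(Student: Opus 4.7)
The plan rests on the reformulation that $d_m(y,z) = d_m(x,y) + d_m(x,z) - 2$ if and only if some shortest $y$-$z$ path passes through $p$, the unique neighbour of $x$; this is immediate from $d_m(x,y) = 1 + d_m(p,y)$, $d_m(x,z) = 1 + d_m(p,z)$, and the triangle inequality. Both directions of the biconditional then become structural statements about shortest paths through $p$.

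For the forward direction, suppose $x$ is not in a blob. Then $p$ lies on no cycle, so its other two incident edges $pq_1, pq_2$ are cut-edges, and I take $Y$ and $Z$ to be the leaves of $N$ reachable from $p$ via $pq_1$ and $pq_2$ respectively. A short degree count, using the identity $L' = I' + 1 - 2c'$ (for any subnetwork cut off by a cut-edge in a level-$2$ network, where $L'$, $I'$, and $c'$ denote its numbers of leaves, internal vertices, and cyclomatic number) rules out an empty $Y$ or $Z$. Every path from $y \in Y$ to $z \in Z$ is forced through $p$, making the condition automatic. For uniqueness, let $(Y', Z')$ be any valid partition; after possibly relabelling I may assume $Y \cap Y' \neq \emptyset$. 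If $Y \not\subseteq Y'$, pick $y_1 \in Y \cap Y'$ and $y_2 \in Y \cap Z'$: both lie in the component past $pq_1$, so a $p$-avoiding path gives $d_m(y_1, y_2) \leq d_m(x, y_1) + d_m(x, y_2) - 4$, contradicting the condition required of the cross-pair $(y_1, y_2)$ in $(Y', Z')$. Hence $Y \subseteq Y'$ and symmetrically $Z \subseteq Z'$, so $(Y, Z) = (Y', Z')$.

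For the reverse direction I will prove the contrapositive: if $x$ lies in a blob $B$, then no valid partition exists. I plan to establish this by showing that the \emph{failure graph} $H$ on $X - \{x\}$, in which $\ell_1$ and $\ell_2$ are adjacent exactly when the condition fails for $(\ell_1, \ell_2)$, is connected; any valid partition would split $H$ into $H$-components assigned to its two sides, so this rules out every partition. For each $\ell \in X - \{x\}$ I define the entry point $\pi(\ell) \in V(B) - \{p\}$ as the unique vertex of $B$ from which $\ell$ is reachable without revisiting $B$. On $V(B - p)$ I introduce the relation $u \sim v$ iff $d_{B-p}(u,v) < d_B(u,p) + d_B(p,v)$, equivalently, iff no shortest $u$-$v$ path in $B$ passes through $p$. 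Every edge of $B - p$ witnesses a $\sim$-adjacency because its endpoints are at $B-p$-distance $1$ while $d_B(u,p) + d_B(p,v) \geq 2$, and $B - p$ is connected by the $2$-connectivity of $B$, so the $\sim$-graph on $V(B - p)$ is connected. Leaves sharing an entry point are $H$-adjacent (their shortest path stays in the common subnetwork), and leaves with $\sim$-adjacent entry points are $H$-adjacent, so this connectivity transfers to $H$.

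The main obstacle is that junction vertices of $B - p$ carry no attached leaves, so $H$-adjacencies cannot chain through such vertices directly. I plan to handle this via the classification of level-$2$ generators: the only level-$2$ generator is the theta graph, and its two sides not containing $p$ cannot both be unsubdivided edges (else $N$ would contain multi-edges). Hence at least one such side carries a subdivision vertex with an attached leaf, providing the intermediaries needed to chain $H$-adjacencies past junction vertices. For level-$1$ blobs the issue does not arise since $B - p$ has no junction vertices, and every vertex of $B - p$ is a side vertex carrying at least one attached leaf. Combining the two cases yields $H$ connected whenever $x$ lies in a blob, completing the contrapositive.
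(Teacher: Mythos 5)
Your argument is correct, and its skeleton coincides with the paper's: the same reformulation (the condition holds for a pair $y,z$ iff some shortest $y$--$z$ path passes through the neighbour $p$ of $x$), the same forward direction (the two cut-edges at $p$ define the partition, and a triangle-inequality computation forces uniqueness), and the same contrapositive for the converse. Where you genuinely diverge is in how the converse is finished. The paper argues per partition: leaves reachable from $B$ via the same cut-edge must lie on the same side, and then one can always find two cut-edges, with leaves on opposite sides, whose endpoints on $B$ are joined by a shortest path avoiding $p$ --- a step the paper justifies only with the remark that ``there exist enough cut-edges to ensure there are no parallel edges in~$B$.'' You recast this as connectivity of a failure graph, established through the relation $u\sim v$ on $V(B)-\{p\}$ together with the fact that every edge of $B-p$ is a $\sim$-edge and that simplicity of $N$ forces a subdivided side of the theta graph. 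This is logically equivalent to the paper's claim (a valid partition exists iff the failure graph is disconnected), but it turns the paper's one-sentence assertion into a checkable connectivity statement, which is a genuine gain in rigour. The price is the bookkeeping you defer: you still owe the verification that two leaf-bearing vertices adjacent to a common junction on different sides satisfy $d_{B-p}(u,v)=2<d_B(u,p)+d_B(p,v)$ (which holds because at most one of them can be adjacent to $p$), and the observation that the two segments of $p$'s own side are \emph{not} $\sim$-linked across $p$ (for the two neighbours $u,v$ of $p$ on that side, $d_B(u,p)+d_B(p,v)=2\leq d_{B-p}(u,v)$) and must instead be connected by routing around through the subdivided side whose existence you established. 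Both checks go through in every configuration, so the plan is sound as stated.
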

\begin{proof}
    Suppose first that a leaf~$x$ is not contained in a blob.
    Let~$p_x$ denote the neighbour of~$x$, and let~$p,q$ denote the two neighbours of~$p_x$ that is not~$x$.
    Observe that every leaf in~$X-\{x\}$ can be reached from~$p_x$ via one of the cut-edges~$p_xp$ or~$p_xq$.
    Let~$Y$ and~$Z$ denote the set of all leaves that can be reached from~$p_x$ via the cut-edge~$p_xp$ and~$p_xq$, respectively. 
    Note that a shortest path between some~$y\in Y$ and some~$z\in Z$ passes through the edges~$p_xp$ and~$p_xq$.
    Then by observing that the shortest path from~$x$ to~$y$ and the shortest path from~$x$ to~$z$ uses the same edges as the shortest path from~$y$ to~$z$, bar the use of the edge incident to~$x$ twice, we obtain the equation~$d_m(y,z) = d_m(x,y) + d_m(x,z) - 2$ for all~$y\in Y$ and~$z\in Z$.
    
    We now show that such a partition is unique.
    We claim that all leaves that can be reached from~$p_x$ via the edge~$p_xp$ must be contained in the same set in the partition.
    Let~$y_1$ and~$y_2$ be an arbitrarily chosen pair of leaves that can be reached from~$p_x$ via the edge~$p_xp$, and suppose for a contradiction that they are placed in different sets of the partition.
    Then,
    \begin{align*}
        d_m(x,y_1) + d_m(x,y_2) - 2 &= d_m(p,y_1) + d_m(p,y_2) + 2 \\
                                    &> d_m(p,y_1) + d_m(p,y_2) \\
                                    &\geq d_m(y_1,y_2),
    \end{align*}
    where the final inequality is the triangle inequality.
    Hence~$y_1$ and~$y_2$ must be contained in the same set of the partition; since~$y_1$ and~$y_2$ were chosen arbitrarily, all leaves that can be reached from~$p_x$ via the edge~$p_xp$ must be contained in the same set in the partition.
    Similarly, all leaves that can be reached from~$p_x$ via the edge~$p_xq$ must be contained in the same set in the partition.
    Observe that all leaves in~$X-\{x\}$ can be reached from~$p_x$ via the edge~$p_xp$ or via the edge~$p_xq$.
    Since neither sets of the partition can be empty, it follows then that the partition must be unique, with~$Y$ and~$Z$ containing all leaves that can be reached from~$p_x$ via~$p_xp$ and~$p_xq$, respectively.
    \medskip
    
    To prove the other direction, we show that if a leaf~$x$ is contained in a blob~$B$, then there is no such partition that satisfies the given equation.
    Let~$p_x$ denote the neighbour of~$x$.
    We first show that for leaves~$y,z\in X-\{x\}$, if all shortest paths between~$y$ and~$z$ do not contain the vertex~$p_x$, then the equation is not satisfied by~$y$ and~$z$.
    Let~$p_y$ and~$p_z$ denote the vertices on~$B$ that are closest to the leaves~$y$ and~$z$ respectively. 
    Note that it is possible to have~$p_y = p_z$ -- this is the case where all shortest paths between~$y$ and~$z$ do not pass through~$B$.
    Then the following equations hold:
    \begin{align*}
        d_m(x,y) &= 1 + d_m(p_x,p_y) + d_m(p_y,y)\\
        d_m(x,z) &= 1 + d_m(p_x,p_z) + d_m(p_z,z).
    \end{align*}
    We now distinguish two cases.
    \begin{enumerate}
        \item If~$p_y\neq p_z$, then by the triangle inequality and as all shortest paths between~$y$ and~$z$ do not contain the vertex~$p_x$, we must have that
        \begin{equation}\label{eqn:Lvl1BlobAdjacencyp_y_neq_p_z}
            d_m(p_y,p_z) < d_m(p_x,p_y) + d_m(p_x,p_z).
        \end{equation}
        It follows that
        \begin{align*}
            d_m(y,z)    &= d_m(y,p_y) + d_m(p_y,p_z) + d_m(p_z,z)\\
                        &= \left(d_m(x,y) - d_m(p_x,p_y) - 1\right) + d_m(p_y,p_z)\\ 
                        &\quad\quad + \left(d_m(x,z) - d_m(p_x,p_z) - 1\right)\\
                        &= d_m(x,y) + d_m(x,z) - 2 + d_m(p_y,p_z)\\ 
                        &\quad\quad - \left(d_m(p_x,p_y) + d_m(p_x,p_z)\right)\\
                        &< d_m(x,y) + d_m(x,z) - 2,
        \end{align*}
        where the final inequality follows from Inequality~\ref{eqn:Lvl1BlobAdjacencyp_y_neq_p_z}.
        \item If~$p_y = p_z$, then let~$p$ denote the neighbour of~$p_y$ that is not on the blob~$B$. 
        Then
        \begin{align*}
            d_m(y,z)    &\leq d_m(y,p_y) + d_m(z,p_y) - 2d_m(p_y,p)\\
                        &= (d_m(x,y) - d_m(p_x,p_y) - 1) + (d_m(x,z) - d_m(p_x,p_y) - 1) - 2\\
                        &= d_m(x,y) + d_m(x,z) - 2 - 2d_m(p_x,p_y) - 2\\
                        &< d_m(x,y) + d_m(x,z) - 2,
        \end{align*}
        where the first inequality follows since the shortest path between~$y$ and~$z$ may not pass through~$p$ (e.g., if~$p$ is a vertex on a blob), and the final inequality follows as~$d_m(p_x,p_y)\geq1$ and~$d_m(p_y,p)=1$.
    \end{enumerate}
    It remains to show that for any partition~$Y\cup Z$ of~$X-\{x\}$ where~$Y,Z\neq\emptyset$, there exists a leaf pair~$y\in Y$ and~$z\in Z$ such that no shortest path between~$y$ and~$z$ uses~$p_x$.
    
    Suppose first that $B$ is a level-$1$ blob.
    Since our network contains no parallel edges,~$B$ must be incident to at least two cut-edges in addition to the edge~$p_xx$.
    If two leaves that can be reached from~$B$ via the same cut-edge are placed in different sets of the partition, then we are done as no shortest path between these leaves uses~$p_x$; therefore we may assume that leaves that can be reached from~$B$ via the same cut-edge are placed in the same set in the partition.
    Since~$Y$ and~$Z$ are both non-empty, there must exist two cut-edges~$e_1, e_2$ (excluding~$p_xx$) whose endpoints form an edge of~$B$, such that there exists a leaf that can be reached from~$B$ via~$e_1$ and a leaf that can be reached from~$B$ via~$e_2$ for which the two leaves lie in different sets of the partition.
    Every shortest path between these two leaves passes through the edge connecting the endpoints of~$e_1$ and~$e_2$ and therefore does not use~$p_x$.
    Therefore we are done.

    Now suppose that~$B$ is a level-$2$ blob.
    For the same reason as in the level-1 case (see proof of Theorem~\ref{thm:L1ReconstructibleShortPath}), if there are two leaves that can be reached from~$B$ via the same cut-edge that are placed in different sets of the partition, then we are done; therefore we may assume that leaves that can be reached from~$B$ via the same cut-edge are placed in the same set in the partition.
    Since~$Y$ and~$Z$ are both non-empty, it follows that there exist two cut-edges~$e_1,e_2$ incident to~$B$, such that leaves~$y,z$ can be reached from~$B$ via~$e_1,e_2$, respectively, for which~$y\in Y$ and~$z\in Z$.
    There must exist a pair of such cut-edges such that all shortest paths between their endpoints on~$B$ do not contain~$p_x$, since there exist enough cut-edges to ensure there are no parallel edges in~$B$.
    Given such a pair of cut-edges, take one leaf that can be reached from~$B$ via the first cut-edge and take another leaf that can be reached from~$B$ via the other cut-edge.
    Then no shortest path between this pair of leaves uses~$p_x$, and thus we are done.
\end{proof}

Lemma~\ref{lem:LeafBlobAdjacency} does not hold in general for networks of level higher than~$2$.
An example of this for a level-$3$ network is shown in Figure~\ref{fig:LeafBlobAdjacencyCE}.

We now show that after identifying a leaf that is not contained in a blob, we can delete it from the network and adjust the distance matrix accordingly.
We also show that upon reconstructing the reduced network from the modified distance matrix, there is a unique cut-edge to which we may \emph{reattach} the deleted leaf.
\emph{Reattaching} a leaf~$x$ to a cut-edge is the action of subdividing the cut-edge by a vertex~$p_x$, and adding an edge~$p_xx$.
In the setting of Lemma~\ref{lem:LeafBlobAdjacency}, we say that the unique partition~$Y\cup Z$ is \emph{induced} by the leaf~$x$.

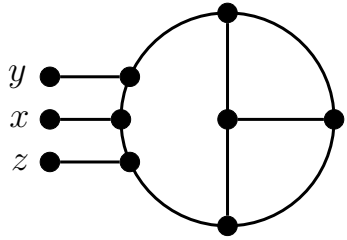
\begin{figure}
    \centering
    \resizebox{0.35\textwidth}{!}{
    \begin{tikzpicture}[every node/.style={draw, circle, fill, inner sep=0pt}]
     	\tikzset{edge/.style={very thick}}
     	
        \draw[black, very thick] (0,0) circle (1.5);
    
        \node[] (c)     at (0,0)            {a};
        \node[] (w)     at (-1.5,0)         {a};
        \node[] (n)     at (0,1.5)          {a};
        \node[] (e)     at (1.5,0)          {a};
        \node[] (s)     at (0,-1.5)         {a};
        \node[] (py)    at (-1.3747,0.6)    {a};
        \node[] (pz)    at (-1.3747,-0.6)   {a};
        \node[] (x)     at (-2.5,0)         {a};
        \node[] (y)     at (-2.5,0.6)       {a};
        \node[] (z)     at (-2.5,-0.6)      {a};

        \node[draw = none, fill = none, left=1mm of x] (lx) {\large $x$};
        \node[draw = none, fill = none, left=1mm of y] (ly) {\large $y$};
        \node[draw = none, fill = none, left=1mm of z] (lz) {\large $z$};
    
        \draw[edge] (c) -- (e);
        \draw[edge] (n) -- (s);
        \draw[edge] (py) -- (y);
        \draw[edge] (pz) -- (z);
        \draw[edge] (w) -- (x);
    
    \end{tikzpicture}
    }
    \caption{A level-$3$ network on~$X = \{x,y,z\}$ where all of its leaves are contained in a blob.
    $Y=\{y\}$ and~$Z=\{z\}$ is a partition of~$X -\{x\}$ such that~$Y,Z\neq\emptyset$ and~$d_m(y,z) = d_m(x,y) + d_m(x,z)-2$ for all~$y\in Y$ and~$z\in Z$.
    Observe that this holds in general for level-$k$ networks where~$k\ge3$ by replacing the level-$3$ blob by an arbitrary level-$k$ blob.}
    \label{fig:LeafBlobAdjacencyCE}
\end{figure}

\begin{lemma}\label{lem:LeafBlobAdjacencyAdjust}
    Let~$N$ be a level-$2$ network on~$X$ where~$|X|\ge3$, and let~$x$ be a leaf that is not contained in a blob.
    Let~$Y\cup Z$ denote the unique partition of~$X' = X-\{x\}$ that is induced by~$x$.
    Then upon deleting the leaf~$x$, we obtain a network~$N'$ on~$X'$ such that the multisets of distances for~$N'$ contains the elements
    \[ d^{N'}(y,z) = 
    \begin{cases} 
        d^N(y,z)      & \text{if }y,z\in Y \text{ or } y,z\in Z \\
        d^N(y,z) - 1  & \text{if }y\in Y, z\in Z \text{ or } z\in Y, y\in Z.
    \end{cases}
    \]
    In addition, there is only one edge location in~$N'$ where~$x$ can be reattached to, to obtain a network with the same multisets of distances as~$N$.
    In particular, this network is isomorphic to~$N$.
\end{lemma}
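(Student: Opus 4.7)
The plan is to prove the two claims separately. Let $p_x$ denote the neighbour of $x$ in $N$, and let $u_Y, u_Z$ denote its two other neighbours, lying on the $Y$- and $Z$-side respectively; both edges $p_xu_Y$ and $p_xu_Z$ are cut-edges of $N$, and upon deleting $x$ and suppressing $p_x$ they merge into a single cut-edge $e^* = u_Yu_Z$ of $N'$. For the distance formula, I would observe that for $y, z$ lying in the same part of $(Y, Z)$, no simple path between them can use $p_x$: entering $p_x$ via either incident cut-edge forces the path to cross to the opposite part, and returning would require reusing $p_x$ or the dead-end edge $p_xx$. Hence all paths survive unchanged in $N'$, giving $d^{N'}(y,z) = d^N(y,z)$. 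For $y \in Y$ and $z \in Z$, every simple path must traverse $p_x$ exactly once, using both incident cut-edges, and after suppression these two edges become one, shortening each path by exactly one; hence $d^{N'}(y,z) = d^N(y,z) - 1$ as multisets.

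For the uniqueness statement, suppose $N^*$ is obtained by reattaching $x$ to some edge $e$ of $N'$ with $\mathcal{D}(N^*) = \mathcal{D}(N)$. Taking entrywise minima gives $\mathcal{D}_m(N^*) = \mathcal{D}_m(N)$, so by Lemma~\ref{lem:LeafBlobAdjacency} applied to $N^*$, the leaf $x$ in $N^*$ witnesses the same unique partition $(Y, Z)$ that it does in $N$. This forces $x$ not to be contained in any blob of $N^*$ (hence $e$ is a cut-edge of $N'$, rather than an edge inside some blob) and forces the partition induced by $e$ to equal $(Y, Z)$ (so removing $e$ from $N'$ separates the leaves of $Y$ from those of $Z$).

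The hardest step is to rule out cut-edges $e \neq e^*$ that still induce $(Y, Z)$. Any such $e$ lies on the same path from the $Y$-side to the $Z$-side in the blob-tree of $N'$ as $e^*$ does, and the interior of the segment from $e^*$ to $e$ must carry no leaves (else the induced partitions would differ on those leaves). A short level-based count shows that a level-$2$ network cannot contain a leafless ``dead-end'' subgraph attached by a single cut-edge, so every branching along this segment must occur inside a blob; hence there is at least one blob $B$ strictly between $e^*$ and $e$, which is a bridge blob with exactly two incident cut-edges and no leaves attached. Without loss of generality I would place $B$ on the $Y$-side of $e^*$, and let $v$ be the endpoint of $e$ on the side containing $Z$, so that every simple path in $N'$ from $v$ to any leaf $z \in Z$ must first cross $B$ (from $v$ to $u_Y$) and then the cut-edge $e^*$. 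The $2$-connectivity of $B$ guarantees at least two internally vertex-disjoint paths between any two of its vertices, so the multiset $d^{N'}(v,z)$ is obtained as a ``convolution'' of a multiset of size $\geq 2$ with $d^{N'}(u_Z,z)$, and in particular $|d^{N'}(v,z)| \geq 2\,|d^{N'}(u_Z,z)|$. On the other hand, since $e$ and $e^*$ are both cut-edges, every path from $x$ in $N^*$ (respectively $N$) to a leaf $z \in Z$ factors uniquely through $v$ (respectively $u_Z$), so the identity $d^{N^*}(x,z) = d^N(x,z)$ reduces to $d^{N'}(v,z) = d^{N'}(u_Z,z)$ as multisets, a contradiction. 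Therefore $e = e^*$ and the reattachment reproduces $N$ on the nose, completing the proof.
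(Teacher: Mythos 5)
Your proposal is correct and follows essentially the same route as the paper: the same path-through-$p_x$ analysis for the distance adjustment, the same appeal to Lemma~\ref{lem:LeafBlobAdjacency} to force reattachment at a cut-edge inducing $(Y,Z)$, and the same structural observation that two such cut-edges must be separated by leafless level-$2$ blobs with exactly two incident cut-edges. The only (minor) difference is the final contradiction: the paper compares shortest-distance values (each intervening level-$2$ blob adds $3$), whereas you compare multiset cardinalities (each intervening blob at least doubles the number of paths); both are valid.
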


\begin{proof}
    Let~$p_x$ be the neighbour of~$x$ in~$N$, and let~$p$ and~$q$ be the other neighbours of~$p_x$ that are not~$x$.
    As shown in the proof of Lemma~\ref{lem:LeafBlobAdjacency}, the sets~$Y$ and~$Z$ correspond to the leaves that can be reached from~$p_x$ via~$p_xp$ and via~$p_xq$, respectively.
    Upon deleting~$x$ from~$N$, we note that~$p_x$ becomes a vertex of degree-$2$ and is therefore suppressed in the resulting subgraph.
    Then all paths in~$N$ that used the edge~$p_xp$ and the edge~$p_xq$ have their length decreased by~$1$ in~$N'$; all paths in~$N$ that did not use the edges~$p_xp$ and~$p_xq$ are unaffected by this vertex suppression.
    Observe that any path between a leaf in~$Y$ and a leaf in~$Z$ uses the edges~$p_xp, p_xq$ in~$N$.
    Furthermore, any path between two leaves in~$Y$ or any path between two leaves in~$Z$ did not use the edges~$p_xp, p_xq$ in~$N$.
    Therefore the multisets of distances of~$N'$ can be obtained from the multisets of distances of~$N$ as shown in the statement of the lemma.
    
    We now prove the second statement, namely that~$N'$ contains only one edge where~$x$ can be reattached to, so as to obtain a network with the same multisets of distances as~$N$.
    By Lemma~\ref{lem:LeafBlobAdjacency}, we know that~$x$ is not in a blob, and that~$x$ induces a partition~$Y\cup Z$ of~$X'$.
    This implies that~$x$ must be reattached to~$N'$ at a cut-edge that induces the partition~$Y\cup Z$.
    We now show that there is only one such cut-edge in~$N'$ if we are to obtain a network with the same multisets of distances as~$N$ upon reattaching~$x$.
    If there are two cut-edges~$e_1,e_2$ in~$N'$ that induce the same required partition~$Y\cup Z$, observe that any path from~$e_1$ to~$e_2$ must consist only of level-$2$ blobs that are incident to exactly two cut-edges.
    Note that level-$1$ blobs cannot be included here as otherwise we would produce parallel edges.
    Now take any leaf~$y\in X-\{x\}$, and let~$N_1$ and~$N_2$ denote the networks obtained by attaching~$x$ to~$e_1$ and~$e_2$ respectively.
    Because of the level-$2$ blobs between~$e_1$ and~$e_2$, we have that~$d_m^{N_1}(x,y) \neq d_m^{N_2}(x,y)$.
    But we know that there must exist one cut-edge~$e$ in~$N'$ to which we can attach~$x$ to obtain~$N$.
    We locate this edge~$e$ by finding one that induces the correct partition and satisfies the equation~$d_m^{N_e}(x,y) = d_m^N(x,y)$.
    This proves the claim that~$x$ can be added back to~$N'$ via a unique edge to obtain a network with the same multisets of distances as~$N$.
    Since there is a unique edge where~$x$ can be attached to in order to obtain a network with the same multisets of distances as~$N$, the network obtained this way must be isomorphic to~$N$.
\end{proof}

\subsection{Pendant blobs}

For the remainder of this section, we will restrict to level-$2$ networks with at least two blobs and in which all leaves are contained in blobs.
We can do this by Observation~\ref{obs:Cherry} and Lemmas~\ref{lem:L2 1BSD},~\ref{lem:LeafBlobAdjacency}, and~\ref{lem:LeafBlobAdjacencyAdjust}.

\subsubsection{Pendant level-\emph{1} blobs}

\begin{lemma}\label{lem:L2NetPendantL1Blob}
    Let~$N$ be a level-$2$ network on~$X$.
    A chain~$(a_1,\ldots,a_k)$ with~$k\geq2$ is contained in a pendant level-$1$ blob if and only if~$d(a_1,a_k) = \{4^1,(k+1)^1\}$.
\end{lemma}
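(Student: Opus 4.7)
The plan is to prove both directions by case analysis on where the chain lies. Throughout, let $p_i$ denote the neighbour of $a_i$ for $i\in[k]$; under the standing restriction of this section every leaf is contained in a blob, so the chain sits inside a single blob $B$, which is either level-$1$ (a cycle) or level-$2$ (a theta-shaped subgraph with two degree-$3$ junctions joined by three internally disjoint sides).

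For the forward direction I would work directly in the pendant level-$1$ blob $B$: as $B$ is a cycle incident to exactly one non-trivial cut-edge attached at some vertex $q$, and every other vertex of $B$ carries a leaf, maximality of the chain forces the cycle to consist precisely of $p_1,\ldots,p_k$ and $q$, with $q$ adjacent to both $p_1$ and $p_k$ (else a chain-extending leaf would sit next to $p_1$ or $p_k$). The two simple paths between $a_1$ and $a_k$ are then the one through $p_2,\ldots,p_{k-1}$ of length $k+1$ and the one through $q$ of length $4$, giving $d(a_1,a_k)=\{4^1,(k+1)^1\}$.

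For the reverse direction the multiset $\{4^1,(k+1)^1\}$ has total multiplicity $2$, so exactly two simple paths connect $a_1$ to $a_k$. I would first rule out $B$ being level-$2$: since the two junctions of the theta cannot carry leaves, the chain must sit strictly inside one side, which yields three simple paths between $a_1$ and $a_k$ (the direct one along the chain side, plus one going around via each of the other two sides), contradicting total multiplicity $2$. Hence $B$ is a cycle. Now let $q_1$ and $q_k$ be the cycle-neighbours of $p_1$ and $p_k$ that lie outside the chain; by maximality neither can carry a leaf, so each has a non-trivial cut-edge. If $B$ is pendant, only one such cut-edge is incident to $B$, forcing $q_1=q_k$ and the second path to have length $1+2+1=4$, matching the multiset. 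If $B$ is non-pendant then $q_1\neq q_k$ and the second path has length at least $1+3+1=5$, which cannot appear in $\{4^1,(k+1)^1\}$.

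The main subtlety I anticipate is the borderline case $k=3$, where $4$ and $k+1$ coincide and the target multiset collapses to $\{4^2\}$; I need to check that the non-pendant case is still excluded, which it is since the lower bound $5$ on the second path length strictly exceeds $4$. Apart from this, everything reduces to direct accounting of simple paths in the two candidate blob shapes.
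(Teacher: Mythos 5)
Your proposal is correct and follows essentially the same route as the paper's proof: the forward direction is a direct enumeration of the two paths in a pendant cycle, and the reverse direction rules out a level-$2$ blob because the chain would admit at least three simple paths between $a_1$ and $a_k$ (so the multiset would have at least three elements) and rules out a non-pendant level-$1$ blob because the second path would then have length at least $5$. Your treatment is somewhat more explicit than the paper's (e.g., the $q_1=q_k$ dichotomy and the $k=3$ collapse to $\{4^2\}$), but the underlying argument is the same.
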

\begin{proof}
    Suppose first that a chain~$(a_1,\ldots,a_k)$ with~$k\geq2$ is contained in a pendant level-$1$ blob~$B$.
    As there is only one non-trivial cut-edge incident to~$B$, this chain is the only chain that is contained in~$B$.
    It is then clear that, we must have~$d(a_1,a_k) = \{4^1,(k+1)^1\}$.
    \medskip
    
    Now suppose that there exists a chain~$(a_1,\ldots,a_k)$ with~$k\geq2$ such that~$d(a_1,a_k)=\{4^1,(k+1)^1\}$.
    Clearly the distance~$k+1$ corresponds to the path between~$a_1$ and~$a_k$ that passes through the neighbours of~$a_i$ for~$i\in[k]$.
    Therefore we examine the path between~$a_1$ and~$a_k$ that does not pass through the neighbours of~$a_{i+1}$ for~$i\in[k-2]$.
    Note first that the chain cannot be contained in a non-pendant level-$1$ blob, as otherwise this path between~$a_1$ and~$a_k$ would pass through at least two vertices that are incident to non-trivial cut-edges. In this case, the length of the path between~$a_1$ and~$a_k$ would be at least~$5$, which is a contradiction.
    The chain also cannot be contained in a level-$2$ blob, as otherwise the set~$d(a_1,a_k)$ would contain at least~$3$ elements.
    Therefore the chain must be contained in a pendant level-$1$ blob.
\end{proof}

\begin{lemma}\label{lem:L2NetPendantL1BlobAdjust}
    Let~$N$ be a level-$2$ network on~$X$ in which~$(a_1,\ldots,a_k)$ is a chain that is contained in a pendant level-$1$ blob.
    Let~$N'$ be the network on~$X' = X\cup\{z\}-\{a_1,\ldots,a_k\}$ obtained from~$N$ by replacing the pendant blob by a leaf~$z$.
    For every~$x\in X'-\{z\}$, we can uniquely partition the multiset of distances~$d^N(x,a_1)$ into two equal sized sets~$A$ and~$B$ such that~$A-2 = B-(k+1)$. Then the multisets of distances of~$N'$ contains the elements
    \[d^{N'}(x,y) =
    \begin{cases} 
        d^N(x,y)      & \text{if }x,y\in X'-\{z\} \\
        A-2         & \text{if }y=z.
    \end{cases}
    \]
\end{lemma}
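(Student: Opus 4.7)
The plan is to first pin down the structure of the pendant level-$1$ blob, then read off the multiset $d^N(x,a_1)$ as a sum of two shifted copies of $d^N(x,q)$ (where $q$ is the attachment vertex of the blob), and finally use a greedy minimum-extraction argument to prove that the partition is forced.

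First I would describe the pendant level-$1$ blob. Since it contains a maximal chain $(a_1,\ldots,a_k)$ of leaves, and a level-$1$ blob is a cycle with no parallel edges, the blob must be a cycle on vertices $p_1,\ldots,p_k,q$, where $p_i$ is the neighbour of $a_i$, the cycle edges are $p_1p_2,\ldots,p_{k-1}p_k,p_kq,qp_1$, and $q$ is the unique vertex of the blob incident to the non-trivial cut-edge $qq'$ that joins the blob to the rest of $N$. Any path in $N$ from a leaf $x\in X'-\{z\}$ to $a_1$ must cross $qq'$ exactly once and then traverse one of the two arcs of the cycle from $q$ to $p_1$, of lengths $1$ and $k$, followed by the edge $p_1a_1$. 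Hence, as multisets,
\begin{equation*}
d^N(x,a_1) \;=\; \bigl(d^N(x,q)+2\bigr) \;+\; \bigl(d^N(x,q)+(k+1)\bigr),
\end{equation*}
which already exhibits \emph{one} valid partition $A=d^N(x,q)+2$, $B=d^N(x,q)+(k+1)$ of the required form.

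Next I would prove uniqueness of such a partition. Let $D=d^N(x,a_1)$ and suppose $(A,B)$ is any partition of $D$ with $|A|=|B|$ and $A-2=B-(k+1)$, i.e.\ $B=A+(k-1)$. Since $k\geq 2$ we have $k-1\geq 1$, so every element of $B$ strictly exceeds the minimum of $A$. Thus all copies of the minimum element of $D$ must lie in $A$, and consequently the corresponding copies of (minimum)$+(k-1)$ must lie in $B$. Removing this matched pair and iterating, a greedy algorithm processes $D$ in increasing order, at each step assigning the current smallest remaining element to $A$ and its shift by $k-1$ to $B$. By the same argument the current smallest always has to go to $A$, so the entire partition is forced; uniqueness follows. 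This is the main technical step, though it is short.

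It remains to verify the two cases of the distance formula for $N'$. For $x,y\in X'-\{z\}$, every path in $N$ from $x$ to $y$ avoids the pendant blob entirely (otherwise it would have to cross the single non-trivial cut-edge $qq'$ twice, contradicting simplicity), so such paths survive unchanged in $N'$ and $d^{N'}(x,y)=d^N(x,y)$. For $y=z$, the construction of $N'$ places $z$ as a leaf attached (after suppression of degree-$2$ vertices) to $q'$ by a single edge, so paths from $x$ to $z$ in $N'$ correspond bijectively and length-preservingly to paths from $x$ to $q'$ in $N$, shifted by one. Combined with $d^N(x,q)=d^N(x,q')+1$ (again because $qq'$ is a cut-edge), we get $d^{N'}(x,z)=d^N(x,q)=A-2$, completing the proof. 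The only genuine obstacle is the uniqueness of the partition; once that is in hand, everything else is a direct consequence of the cycle structure and the cut-edge $qq'$.
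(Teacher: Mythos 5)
Your proof is correct and follows essentially the same route as the paper's: decompose each $x$--$a_1$ path into an $x$--$q$ portion plus one of the two $q$--$a_1$ arcs of lengths $2$ and $k+1$, establish uniqueness of the partition by greedy extraction of the minimum, and use the pendant cut-edge to see that all other distances are unaffected. The only cosmetic difference is that you phrase the decomposition as an explicit multiset identity $d^N(x,a_1)=\bigl(d^N(x,q)+2\bigr)+\bigl(d^N(x,q)+(k+1)\bigr)$ and route the $y=z$ case through $q'$ rather than $q$; you should also note explicitly that $k\geq 2$ (no parallel edges), which your uniqueness step relies on.
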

\begin{proof}
    We first prove the claim that for every~$x\in X'-\{z\}$, we can uniquely partition the multiset of distances~$d(x,a_1)$ into two equal sized sets~$A$ and~$B$ such that~$A-2 = B-(k+1)$.
    As usual, let~$p_i$ denote the neighbours of~$a_i$ for~$i\in [k]$, and let~$q$ denote the neighbour of~$p_1$ that is not~$a_1$ nor~$p_2$.
    Note that~$k\ge2$ since otherwise there would be parallel edges.
    Let~$x\in X'$. Then any path from~$x$ to~$a_1$ consists of a path from~$x$ to~$q$ and a path from~$q$ to~$a_1$.
    There are two possible paths from~$q$ to~$a_1$: one is of length~$2$ and uses the edges~$qp_1, p_1a_1$; the other is of length~$k+1$ and uses the edges~$qp_k, p_kp_{k-1},\ldots,p_2p_1,p_1a_1$.
    Therefore every path from~$x$ to~$q$ yields two paths from~$x$ to~$a_1$, for which one of the paths is longer than the other by a length of~$k-1$.
    This implies that the size of the multiset~$d(x,a_1)$ is even, since every path from~$x$ to~$a_1$ can be matched to another path from~$x$ to~$a_1$ that shares the same part of the path between~$x$ and~$q$.
    Now take the smallest element~$d\in d(x,a_1)$. By the argument presented above, there must exist a corresponding element~$d+k-1\in d(x,a_1)$.
    We place~$d$ in set~$A$ and we place~$d+k-1$ in set~$B$, remove both elements from~$d(x,a_1)$ and recurse.
    By continuing this for the smallest element in~$d(x,a_1)$ at each step, this partitions the multiset into a bipartition~$d(x,a_1) = A\cup B$ where~$|A| = |B| = d(x,a_1)/2$, such that~$A + (k-1) = B$.
    It follows from iteratively adding the smallest element from~$d(x,a_1)$ to~$A$, that this bipartition is unique.
    This proves the claim.
    
    To prove the second part of the lemma, first observe that any path between a leaf~$x\in X'-\{z\}$ and~$z$ in the network~$N'$ corresponds to a path between~$x$ and~$q$ in~$N$.
    Now the multiset of distances between~$x$ and~$q$ in~$N$ can be obtained by finding the multiset of distances between~$x$ and~$a_1$ that used the edges~$qp_1,p_1a_1$, and subtracting 2 from each element.
    This is precisely the set~$A-2$ that we have found above.
    For any other leaf~$y\in X'-\{z\}$, we have that all paths between~$x$ and~$y$ are unaffected by the replacement of the blob by~$z$, as the blob is pendant in~$N$.
    Therefore~$d(x,y)$ remains unchanged for~$x,y\in X'-\{z\}$.
\end{proof}

It is again easy to reconstruct the blob after reconstructing the reduced network, since we know that $(a_1,\ldots ,a_k)$ must form a chain on the blob, in that order.

\subsubsection{Pendant level-\emph{2} blobs}

We adopt the following notation for pendant level-$2$ blobs.
Let~$B$ be a pendant level-$2$ blob, and let~$a,b,c,d$ denote the four chains contained in~$B$ of lengths~$k,\ell,m,n \geq0$ such that chains~$c$ and~$d$ are on the same side of $B$ as the non-trivial cut-edge.
Then we say that~$B$ is of the form~$(k,\ell,m,n)$.
For ease of notation, a side without leaves is seen as a length-0 chain.
See Figure~\ref{fig:L2NetPendantL2BlobChain} for pendant level-$2$ blobs of the forms~$(k,0,0,0)$ and~$(k,\ell,0,0)$.

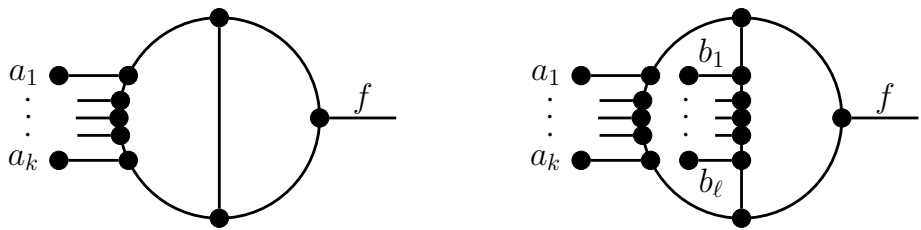
\begin{figure}[ht]
    \begin{subfigure}{.5\textwidth}
    \centering
    \resizebox{0.8\textwidth}{!}{
        \begin{tikzpicture}[every node/.style={draw, circle, fill, inner sep=0pt}]
            \tikzset{edge/.style={very thick}}
            \draw[black, very thick] (0,0) circle (1.5);
            
            \node[] (north) at (90:1.5)  {a};
            \node[] (south) at (270:1.5) {a};
            \node[] (east)  at (0:1.5)   {a};
            
            \node[] (p_1) at (155:1.5) {a};
            \node[] (p_2) at (170:1.5) {a};
            \node[] (p_3) at (180:1.5) {a};
            \node[] (p_4) at (190:1.5) {a};
            \node[] (p_k) at (205:1.5) {a};
            \node[draw = none, fill = none,right= 1cm of east] (p) {}; 
         	
         	\node[left= 0.75cm of p_1] (a_1)    {a};
         	\node[draw = none, fill = none, left=0.5cm of p_2] (a_2) {};
         	\node[draw = none, fill = none, left=0.5cm of p_3] (a_3) {};
         	\node[draw = none, fill = none, left=0.5cm of p_4] (a_4) {};
         	\node[left= 0.75cm of p_k] (a_k)    {a};
         	
            \node[draw = none, fill = none, left=1mm of a_1]  (la_1) {\large $a_1$};
            \node[draw = none, fill = none, left=0.55cm of a_2]  (la_2) {\large $\cdot$};
            \node[draw = none, fill = none, left=0.52cm of a_3]  (la_3) {\large $\cdot$};
            \node[draw = none, fill = none, left=0.55cm of a_4]  (la_4) {\large $\cdot$};
            \node[draw = none, fill = none, left=1mm of a_k]  (la_k) {\large $a_k$};
         	
         	\draw[edge] (north) -- (south);
            \draw[edge] (east) -- (p) node[draw = none, fill = none, above=0.01mm, midway]{\large $f$};
         	\draw[edge] (p_1) -- (a_1);
         	\draw[edge] (p_2) -- (a_2);
         	\draw[edge] (p_3) -- (a_3);
         	\draw[edge] (p_4) -- (a_4);
         	\draw[edge] (p_k) -- (a_k);
        \end{tikzpicture}
        }
    \end{subfigure}
    \begin{subfigure}{.5\textwidth}
        \centering
        \resizebox{0.8\textwidth}{!}{
        \begin{tikzpicture}[every node/.style={draw, circle, fill, inner sep=0pt}]
         	\tikzset{edge/.style={very thick}}

            \draw[black, very thick] (0,0) circle (1.5);

            \node[] (north) at (90:1.5)  {a};
            \node[] (south) at (270:1.5) {a};
         	\node[] (east) at (0:1.5) {a};
         	
            \node[] (p_1) at (155:1.5) {a};
            \node[] (p_2) at (170:1.5) {a};
            \node[] (p_3) at (180:1.5) {a};
            \node[] (p_4) at (190:1.5) {a};
            \node[] (p_k) at (205:1.5) {a};
         	
         	\node[] (q_1) at (0,{1.5*sin(155)}) {a};
         	\node[] (q_2) at (0,{1.5*sin(170)}) {a};
         	\node[] (q_3) at (0,{1.5*sin(180)}) {a};
         	\node[] (q_4) at (0,{1.5*sin(190)}) {a};
         	\node[] (q_l) at (0,{1.5*sin(205)}) {a};
         	
            \node[draw = none, fill = none,right= 1cm of east] (p) {};
         	
         	\node[left=0.75 of p_1] (a_1) {a};
         	\node[draw = none, fill = none, left = 0.5 of p_2] (a_2) {};
         	\node[draw = none, fill = none, left = 0.5 of p_3] (a_3) {};
         	\node[draw = none, fill = none, left = 0.5 of p_4] (a_4) {};
         	\node[left=0.75 of p_k] (a_k) {a};
         	
         	\node[left=0.5 of q_1] (b_1) {a};
         	\node[draw = none, fill = none, left = 0.25 of q_2] (b_2) {};
         	\node[draw = none, fill = none, left = 0.25 of q_3] (b_3) {};
         	\node[draw = none, fill = none, left = 0.25 of q_4] (b_4) {};
         	\node[left=0.5 of q_l] (b_l) {a};
         	
            \node[draw = none, fill = none, left=1mm of a_1]  (la_1) {\large $a_1$};
            \node[draw = none, fill = none, left=0.55cm of a_2]  (la_2) {\large $\cdot$};
            \node[draw = none, fill = none, left=0.53cm of a_3]  (la_3) {\large $\cdot$};
            \node[draw = none, fill = none, left=0.55cm of a_4]  (la_4) {\large $\cdot$};
            \node[draw = none, fill = none, left=1mm of a_k]  (la_k) {\large $a_k$};
            \node[draw = none, fill = none, above right=0.3mm and 0.3mm of b_1]  (lb_1) {\large $b_1$};
            \node[draw = none, fill = none, left=0.25cm of b_2]  (lb_2) {\large $\cdot$};
            \node[draw = none, fill = none, left=0.25cm of b_3]  (lb_3) {\large $\cdot$};
            \node[draw = none, fill = none, left=0.25cm of b_4]  (lb_4) {\large $\cdot$};
            \node[draw = none, fill = none, below right=0.3mm and 0.3mm of b_l]  (lb_l) {\large $b_\ell$};
         	
            \draw[edge] (east) -- (p) node[draw = none, fill = none, above=0.01mm, midway]{\large $f$};
         	\draw[edge]         (north) -- (south);
         	\draw[edge]         (p_1) -- (a_1);
         	\draw[edge]         (p_2) -- (a_2);
         	\draw[edge]         (p_3) -- (a_3);
         	\draw[edge]         (p_4) -- (a_4);
         	\draw[edge]         (p_k) -- (a_k);
         	\draw[edge]         (q_1) -- (b_1);
         	\draw[edge]         (q_2) -- (b_2);
         	\draw[edge]         (q_3) -- (b_3);
         	\draw[edge]         (q_4) -- (b_4);
         	\draw[edge]         (q_l) -- (b_l);
        \end{tikzpicture}
        }
    \end{subfigure}
    \caption{A pendant level-$2$ blob of the form~$(k,0,0,0)$ containing the chain~$(a_1,\dots,a_k)$ (left) and a pendant level-$2$ blob of the form~$(k,\ell,0,0)$ containing the chains~$(a_1,\ldots,a_k)$ and~$(b_1,\ldots,b_\ell)$.
    The edges labelled~$f$ denote the non-trivial cut-edges in both networks.}
    \label{fig:L2NetPendantL2BlobChain}
\end{figure}

\begin{lemma}\label{lem:L2NetPendantL2BlobChain}
    A level-$2$ network~$N$ contains a pendant level-$2$ blob of the form~$(k,0,0,0)$ for~$k\geq2$ with the chain~$(a_1,\ldots,a_k)$
    if and only if~$d(a_1,a_k) = \{5^1,6^1,(k+1)^1\}$.
\end{lemma}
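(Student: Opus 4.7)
The plan is to prove both directions by enumerating the simple paths from $a_1$ to $a_k$ inside the blob containing the chain and using the resulting multiset of path lengths to pin down the blob's structure.

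For the forward direction, I would start from the hypothesized pendant level-$2$ blob $B$ of form $(k,0,0,0)$, whose generator has two degree-$3$ vertices $u,v$ joined by three internally disjoint paths. Because chains $b, c, d$ all have length $0$, every internal vertex of the network has degree $3$, and the lone non-trivial cut-edge $f$ must subdivide one of the three paths at a vertex $w$, the structure is forced: one non-chain side is the direct edge $uv$, the other is the two-edge path $u, w, v$, and the chain side is the path $u, p_1, \ldots, p_k, v$ of length $k+1$. Since $f$ is a cut-edge, every simple path from $a_1$ to $a_k$ stays inside $B$. Starting from $a_1$, such a path must visit $p_1$ and then either continue along the chain to $p_k$ (total length $k+1$) or exit to $u$; from $u$, since $p_k$ is reachable only through $v$, there are exactly two continuations, via the direct edge (total length $5$) or via $w$ (total length $6$). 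This yields $d(a_1, a_k) = \{5^1, 6^1, (k+1)^1\}$.

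For the reverse direction, I would first use the three-path count to rule out alternatives: a chain incident to cut-edges admits only one path between $a_1$ and $a_k$, and a chain inside a level-$1$ blob admits exactly two (around the cycle in either direction). Hence the chain sits inside a level-$2$ blob $B$, and the consecutive adjacency of the $p_i$'s forces all of them onto a single side $e_1$ of $B$. Letting $u, v$ be the generator endpoints of $e_1$, $L_2, L_3 \geq 1$ the lengths of the other two sides, and $m_u, m_v \geq 0$ the numbers of subdivision points of $e_1$ strictly between $u$ and $p_1$ and between $p_k$ and $v$, the three path lengths from $a_1$ to $a_k$ become $k+1$, $L_2 + m_u + m_v + 4$, and $L_3 + m_u + m_v + 4$. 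Matching the latter two to $\{5, 6\}$ forces $m_u = m_v = 0$ and $\{L_2, L_3\} = \{1, 2\}$, so one non-chain side is the direct edge $uv$ and the other is a two-edge path $u, w, v$ for some degree-$3$ internal vertex $w$.

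The last step is to show that $w$ carries the non-trivial cut-edge rather than a leaf. If $w$ were adjacent to a leaf, then $B$ would contain no non-trivial cut-edge and would be the only blob of $N$, contradicting the standing assumption in this section that $N$ has at least two blobs. Hence $w$ is the attachment vertex of the non-trivial cut-edge, making $B$ pendant of form $(k,0,0,0)$ as required. I expect the main obstacle to be verifying cleanly that exactly three simple paths exist inside a level-$2$ blob with the chain on one side (no detour can reuse vertices or leave and re-enter via a cut-edge), together with the bookkeeping that maximality of the chain, combined with the requirement that every internal vertex have degree $3$, forces every subdivision of $e_1$ near the chain's ends to be a cut-edge attachment counted in $m_u$ or $m_v$.
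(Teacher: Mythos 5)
Your proof is correct and follows essentially the same route as the paper's: enumerate the three simple paths between $a_1$ and $a_k$ for the forward direction, and for the converse use the path count $|d(a_1,a_k)|=3$ to force a level-$2$ blob and then the values $5$ and $6$ to pin down the two non-chain sides. Your version is somewhat more explicit than the paper's (the $L_2,L_3,m_u,m_v$ bookkeeping and the final appeal to the standing at-least-two-blobs assumption to rule out a leaf at $w$ are left implicit there), but the underlying argument is the same.
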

\begin{proof}
    Suppose first that~$N$ contains a pendant level-$2$ blob~$B$ of the form $(k,0,0,0)$.
    Let~$e$ denote the non-trivial cut-edge that is incident to~$B$.
    Then the path from~$a_1$ to~$a_k$ that uses the side of~$B$ without~$e$ and without the chain, the side of~$B$ with~$e$, and the side of~$B$ with the chain are of distances~$5,6$, and~$k+1$ respectively.
    \medskip
    
    Suppose now that there exists a chain~$(a_1,\ldots,a_k)$ where~$k\geq2$ such that~$d(a_1,a_k)=\{5^1,6^1,(k+1)^1\}$.
    First, since~$|d(a_1,a_k)|>2$, we note that the chain~$(a_1,\ldots,a_k)$ must be contained in a level-$2$ blob.
    Consider a level-$2$ blob~$B$ that contains the chain~$(a_1,\ldots,a_k)$ on one of its sides, and suppose that there is a single non-trivial cut-edge~$e$ on another one of its sides.
    There must be at least one such edge~$e$ because otherwise there would be parallel edges.
    Currently we have that~$d(a_1,a_k) = \{5^1,6^1,(k+1)^1\}$: adding more cut-edges (trivial or non-trivial) to the sides of~$B$ would change the set of distances.
    Since~$B$ is incident to exactly one non-trivial cut-edge, it is a level-$2$ pendant blob.
\end{proof}

\begin{lemma}\label{lem:L2NetPendantL2BlobLeaf}
    A level-$2$ network~$N$ contains a pendant level-$2$ blob of the form~$(1,0,0,0)$ containing the leaf~$a$
    if and only if~$d_m(a,x)\geq6$ for all~$x\in X-\{a\}$ and for any two leaves~$y,z\in X-\{a\}$,~$d_m(a,y)+d_m(a,z) - d_m(y,z)\geq8$.
\end{lemma}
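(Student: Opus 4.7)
The plan is to prove both directions: the forward by direct distance computation on the theta generator, the reverse by eliminating all other structural possibilities for the blob containing~$a$.

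For the forward direction, I will assume that $a$ sits on a pendant level-$2$ blob $B$ of form $(1,0,0,0)$ with unique non-trivial cut-edge $ws'$. The form $(1,0,0,0)$ forces $d_m(p_a,w)=2$, so every shortest path from $a$ to another leaf $x$ must exit $B$ through $ws'$ and has length $d_m(a,x)=4+d_m(s',x)$. Since every leaf lies in a blob (by the standing assumption of this section), $s'$ cannot have a leaf neighbour, hence $d_m(s',x)\geq 2$ and $d_m(a,x)\geq 6$, establishing~(1). For~(2), the uniqueness of the cut-edge $ws'$ confines any shortest $y$-$z$ path to stay outside~$B$, so the triangle inequality $d_m(y,z)\leq d_m(y,s')+d_m(s',z)$ gives $d_m(a,y)+d_m(a,z)-d_m(y,z) = 8 + d_m(s',y) + d_m(s',z) - d_m(y,z) \geq 8$.

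For the reverse direction, I let $B$ be the blob containing~$a$ and proceed in four steps. First, if $a$'s chain has length $\geq 2$, its neighbour in the chain is at shortest distance $3<6$, violating~(1); so $a$'s chain has length~$1$, which in particular excludes $B$ being pendant level-$1$ (these always contain a chain of length $\geq 2$ to avoid parallel edges). Second, for every other pendant level-$2$ form $(1,\ell,m,n)$ with at least one of $\ell,m,n$ positive, a direct computation on the theta generator exhibits another leaf at distance exactly~$4$ from~$a$, again contradicting~(1); so $a$ is the only leaf of~$B$. Third, I suppose for contradiction that $B$ has $\geq 2$ non-trivial cut-edges with blob-endpoints $w_1,w_2$ and across-vertices $s_1',s_2'$. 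Since the two cut-edges lead to disjoint components of the network after removing $B$, the shortest path between leaves $y,z$ reached through them must traverse $B$ exactly once, giving $d_m(y,z) = d_m(s_1',y) + 2 + d_B(w_1,w_2) + d_m(s_2',z)$ where $d_B$ denotes in-blob distance; substituting into~(2) reduces the requirement to $d_m(p_a,w_1)+d_m(p_a,w_2)-d_B(w_1,w_2)\geq 6$. Fourth, once $B$ is pendant with $a$ alone, $B$ cannot be level-$1$ (only two attachments on a cycle would force parallel edges), so $B$ is pendant level-$2$; and among pendant level-$2$ blobs with $a$ as sole leaf, only the form $(1,0,0,0)$ avoids parallel edges on the empty sides.

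The main obstacle is step three: showing that for every non-pendant blob $B$ containing only~$a$, some choice of $w_1,w_2$ violates $d_m(p_a,w_1)+d_m(p_a,w_2)-d_B(w_1,w_2)\geq 6$. The key structural observation is that when $a$ is the sole leaf of~$B$, every non-hub vertex of~$B$ other than $p_a$ is incident to a non-trivial cut-edge. In the level-$1$ case this means the two cycle-neighbours of $p_a$ are themselves cut-edge endpoints, and taking them as $w_1,w_2$ yields $d_m(p_a,w_i)=1$ and $d_B(w_1,w_2)\in\{1,2\}$, so the left-hand side is at most~$1<6$. In the level-$2$ case I will enumerate by where the two cut-edges sit on the theta generator (both on one side, on different sides, or sharing $p_a$'s side) and use the small diameter of the generator to bound the left-hand side by a small constant in each sub-case. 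This enumeration is routine but is the most delicate part of the argument.
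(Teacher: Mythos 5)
Your forward direction and your reduction of condition (2) to the inequality $d_m(p_a,w_1)+d_m(p_a,w_2)-d_B(w_1,w_2)\geq 6$ are both sound and mirror the paper's argument. The gap is in the converse, in how you dispose of non-pendant blobs. Your Step 2 concludes ``so $a$ is the only leaf of $B$,'' but the distance-$4$ computation behind it is carried out on the \emph{pendant} forms $(1,\ell,m,n)$: it exploits the fact that, with only one non-trivial cut-edge on the blob, any other chain has an end-leaf whose neighbour is adjacent to a hub and hence at distance $2$ from $p_a$. For a non-pendant level-$2$ blob this fails: a second leaf $x$ of $B$ can satisfy $d_m(p_a,p_x)\geq 4$ (so $d_m(a,x)\geq 6$) if enough non-trivial cut-edges subdivide the generator between $p_a$ and $p_x$ --- e.g.\ put $p_a$ and $p_x$ on one side separated by three subdivision vertices carrying non-trivial cut-edges, and give the other two sides at least one subdivision vertex each. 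So condition (1) does \emph{not} force $a$ to be the sole leaf of a non-pendant blob, and your ``key structural observation'' (every non-hub vertex of $B$ other than $p_a$ carries a non-trivial cut-edge) is false precisely in the case Step 3 is supposed to handle; the planned enumeration of cut-edge positions would be conducted under a hypothesis that need not hold.

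The repair is to weaken the observation to what condition (1) actually yields: every vertex of $B$ at distance at most $2$ from $p_a$ is either a hub or the endpoint of a \emph{non-trivial} cut-edge (a trivial cut-edge there would place a leaf at distance at most $5$ from $a$). Since at most one side of the generator can be free of subdivision vertices (no parallel edges), a short case check on the two $B$-neighbours of $p_a$ (cut-edge endpoint or hub) then produces, for any non-pendant $B$, two distinct non-trivial cut-edge endpoints $w_1,w_2$ with $d_m(p_a,w_i)\leq 2$, so the left-hand side of your reduced inequality is at most $2+2-1=3<6$, independently of whether $B$ contains other leaves. This is exactly the paper's route (``take two non-trivial cut-edges that are closest to $a$''). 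Your level-$1$ sub-case survives as written, since condition (1) alone forces the two cycle-neighbours of $p_a$ to carry non-trivial cut-edges; the rest of your outline (Steps 1, 2, 4 and the forward direction) is correct.
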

\begin{proof}
    Suppose first that a pendant level-$2$ blob~$B$ contains only the leaf~$a$.
    Let~$uv$ denote the non-trivial cut-edge incident to~$B$, where~$u$ is the vertex that is on~$B$.
    Now, the shortest distance from~$a$ to~$u$ is exactly~$3$.
    Furthermore, the shortest distance from~$u$ to a leaf~$x$ that is not~$a$ is at least~$3$, since such a path must contain the edge~$uv$, an edge of another blob, and an edge incident to~$x$.
    In particular, such a path must contain an edge of another blob since all leaves are assumed to be contained in blobs.
    Therefore~$d_m(a,x)\geq6$ for all~$x\in X-\{a\}$.
    To prove the second statement, let~$y,z\in X-\{a\}$.
    Then by the triangle inequality, we have
    \[d_m(a,y) + d_m(a,z) - d_m(y,z) = d_m(v,y) + d_m(v,z) - d_m(y,z) + 8 \ge 8.\]
    
    Now suppose that~$d_m(a,x)\geq6$ for all~$x\in X-\{a\}$ and for any two leaves~$y,z\in X-\{a\}$, we have~$d_m(a,y)+d_m(a,z) - d_m(y,z)\geq8$.
    The first condition implies that~$(a)$ is a maximal chain.
    Suppose first that~$a$ was contained in a level-$1$ blob~$B$.
    Note that~$B$ cannot be pendant as otherwise the network would have parallel edges.
    Let~$p_a$ denote the neighbour of~$a$ (a vertex of~$B$), and let~$p_y,p_z$ denote the two neighbours of~$p_a$ on~$B$ that are not~$a$.
    The vertices~$p_y$ and~$p_z$ are necessarily incident to non-trivial cut-edges, as otherwise~$a$ would be contained in a chain, in which case the condition~$d_m(a,x)\geq6$ would be violated for some leaf~$x$ in the chain.
    Now let~$y$ and~$z$ denote any leaves in~$X-\{a\}$ that can be reached from~$B$ via the cut-edges incident to~$p_y$ and~$p_z$ respectively.
    Then we have that~$d_m(a,y)+d_m(a,z)-d_m(y,z) = 2$ if a shortest path between~$p_y$ and~$p_z$ passes the vertex~$p_a$, and we have~$d_m(a,y)+d_m(a,z)-d_m(y,z) = 3$ otherwise.
    This contradicts our second condition, and therefore we may assume that the leaf~$a$ is contained in a level-$2$ blob~$B$.
    Suppose that~$B$ is a non-pendant blob, in other words, that there are at least two non-trivial cut-edges incident to~$B$.
    Take two non-trivial cut-edges that are closest to~$a$, and take any two leaves~$y$ and~$z$ that can be reached from~$B$ via these cut-edges.
    The shortest distance from~$a$ to the endpoints of these cut-edges on~$B$ is at most~$3$.
    Therefore we have~$d_m(a,y) + d_m(a,z) - d_m(y,z) \leq 6$, which contradicts our second condition.
    Therefore we may assume that the leaf~$a$ is contained in a pendant level-$2$ blob~$B$.
    But aside from the leaf~$a$ and the single non-trivial cut-edge, no other cut-edges can be incident to~$B$.
    Indeed, having another leaf that is contained in~$B$ violates the first condition, and having another non-trivial cut-edge contradicts the fact that~$B$ was pendant.
    Therefore~$B$ is a pendant level-$2$ blob of the form~$(1,0,0,0)$ that contains a single leaf~$a$.
\end{proof}

\begin{lemma}\label{lem:L2NetPendantL2BlobChainAdjust}
    Let~$N$ be a level-$2$ network on~$X$ containing a pendant level-$2$ blob of the form~$(k,0,0,0)$ for~$k\geq1$ with the chain~$(a_1,\ldots,a_k)$.
    Then we can replace the pendant blob by a leaf~$z$ to obtain a network~$N'$ on~$X' = X\cup\{z\}-\{a_1,\ldots,a_k\}$.
    For every~$x\in X'-\{z\}$, we can uniquely partition the multiset of distances~$d(x,a_1)$ into four equal sized sets~$A,B,C,D$ such that~$A - 3 = B-4 = C-(k+2) = D-(k+3)$. Then the multisets of distances of~$N'$ contains the elements
    \[d^{N'}(x,y) =
    \begin{cases} 
        d^N(x,y)    & \text{if }x,y\in X'-\{z\} \\
        A-3         & \text{if }y=z.
    \end{cases}
    \]
\end{lemma}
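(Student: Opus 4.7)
The plan is to trace every path from $x$ to $a_1$ through the pendant blob $B$ and to catalogue the finitely many blob-internal shapes such a path can take. Let $u$ and $v$ be the two degree-$3$ vertices of the generator of~$B$, let $w$ be the vertex of $B$ incident to the non-trivial cut-edge, and let $w'$ be the other endpoint of that cut-edge. Let $p_i$ denote the neighbour of $a_i$ for $i \in [k]$. Since $B$ has the form $(k,0,0,0)$, the three sides of the generator consist of one subdivided by $p_1, \ldots, p_k$, one subdivided only by~$w$, and one empty side.

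First I would enumerate all simple paths from $w$ to $a_1$ inside~$B$. Each such path uses the edge $p_1 a_1$ together with a simple path from $w$ to $p_1$ inside~$B$. A direct case analysis of the theta-like generator shows there are exactly four simple paths from $w$ to $p_1$, of lengths $2$, $3$, $k+1$, and $k+2$, obtained respectively by the routes $w \to u \to p_1$, $w \to v \to u \to p_1$, $w \to v \to p_k \to \cdots \to p_1$, and $w \to u \to v \to p_k \to \cdots \to p_1$. Adding the edge $p_1 a_1$ yields corresponding path lengths from $w$ to $a_1$ equal to $3$, $4$, $k+2$, and $k+3$.

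Next, since $B$ is pendant, every path from $x$ to $a_1$ in $N$ uses the cut-edge $ww'$ exactly once and so decomposes uniquely as a path from $x$ to $w'$ in $N$ avoiding~$B$, the edge $w'w$, and one of the four blob-internal paths from $w$ to $a_1$. Writing $S := d^N(x, w') + 1$, this yields
\[
    d^N(x, a_1) \;=\; (S + 3) + (S + 4) + (S + k + 2) + (S + k + 3),
\]
a multiset sum of four pieces each of size $|S|$. Setting $A = S+3$, $B = S+4$, $C = S+k+2$, $D = S+k+3$ gives the required $A - 3 = B - 4 = C - (k+2) = D - (k+3) = S$. Since $z$ is adjacent to $w'$ in $N'$ via a single edge, $d^{N'}(x, z) = d^N(x, w') + 1 = S = A - 3$, matching the claimed formula.

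For uniqueness of the partition I would argue inductively on the smallest element of $d^N(x, a_1)$, mirroring the proof of Lemma~\ref{lem:L2NetPendantL1BlobAdjust}. Because $+3$ is the smallest of the four shifts, the minimum $m$ of $d^N(x, a_1)$ must lie in $A$, forcing $m - 3$ to be the minimum of~$S$; its images in $B$, $C$, $D$ are $m+1$, $m+k-1$, and $m+k$, respectively. Remove one copy each of these four values from $d^N(x, a_1)$ (with multiplicity: two copies of $m+1$ when $k = 2$, and two copies each of $m$ and $m+1$ when $k = 1$) and iterate; the recursion recovers $S$ uniquely. Finally, for any two leaves $x, y \in X' - \{z\}$, no path between them enters~$B$ since $B$ is pendant, so $d^{N'}(x, y) = d^N(x, y)$.

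The main obstacle is the uniqueness step, where the shift collisions at $k = 1$ (giving $A = C$ and $B = D$) and at $k = 2$ (giving $B = C$) must be handled by tracking multiplicities carefully rather than naively removing four distinct values per iteration.
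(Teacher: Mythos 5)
Your proposal is correct and follows essentially the same route as the paper: decompose every $x$--$a_1$ path at the cut-edge vertex, enumerate the four blob-internal paths of lengths $3,4,k+2,k+3$, deduce the fourfold multiset sum, and recover $A$ greedily from the smallest element as in Lemma~\ref{lem:L2NetPendantL1BlobAdjust}. Your explicit handling of the shift collisions at $k=1$ and $k=2$ is a welcome detail that the paper leaves implicit.
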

\begin{proof}
    We first show that the partition of~$d(x,a_1)$ exists and that it is unique.
    Let~$B$ denote the pendant level-$2$ blob containing~$(a_1,\ldots,a_k)$, and let~$q$ denote the vertex in~$B$ that is an endpoint of a non-trivial cut-edge.
    Let~$x\in X'-\{z\}$.
    Every path from~$x$ to~$a_1$ consists of a path from~$x$ to~$q$ and a path from~$q$ to~$a_1$. There are four possible paths from~$q$ to~$a_1$ of lengths~$3,4,k+2$, and~$k+3$.
    By an analogous argument used in the proof of Lemma~\ref{lem:L2NetPendantL1BlobAdjust}, there is a unique partition of~$d(x,a_1)$ into four equal sized sets~$A,B,C,D$ such that~$A - 3 = B-4 = C-(k+2) = D-(k+3)$.
    
    Upon replacing the pendant blob~$B$ by a leaf~$z$, we note that the multiset of distances between a leaf~$x\in X'-\{z\}$ and~$z$ in~$N'$ is equivalent to the multiset of distances between~$x$ and~$q$ in~$N$.
    This multiset of distances is precisely the set~$A-3$.
    Let~$y\in X'-\{z\}$ be another leaf that is not~$x$.
    Then all paths between~$x$ and~$y$ in~$N$ are unaffected after replacing~$B$ by a leaf~$z$; therefore~$d^{N'}(x,y) = d^{N}(x,y)$.
\end{proof}

\paragraph{Pendant level-$2$ blobs with at least two chains}

\begin{lemma}\label{lem:L2NetPendantL2BlobTwoChains}
    A level-$2$ network $N$ on~$X$ contains a pendant level-$2$ blob of the form~$(k,\ell,0,0)$ with chains~$a = (a_1,\ldots,a_k)$ and~$b=(b_1,\ldots,b_\ell)$ with~$k,\ell\ge1$
    if and only if~$a$ and~$b$ are adjacent twice, and for all~$c\in a\cup b$, we have~$d_m(c,x)\geq6$ for all~$x\in X-(a\cup b)$ and~$d_m(c,y)+d_m(c,z) - d_m(y,z)\geq8$ for any two leaves~$y,z\in X-(a\cup b)$.
\end{lemma}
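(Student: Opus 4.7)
The proof splits into the forward implication (structure $\Rightarrow$ conditions), handled by direct distance computation, and the backward implication (conditions $\Rightarrow$ structure), which requires incrementally reconstructing the blob structure from the hypotheses.

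For the forward direction, let $B$ have theta-generator poles $u,v$, cut-edge-side subdivision $q$, and non-trivial cut-edge $qe$. Assuming WLOG that $a_1,b_1$ are closest to $u$ and $a_k,b_\ell$ to $v$, one computes $d_m(a_1,u)=d_m(b_1,u)=2$ and $d_m(a_k,v)=d_m(b_\ell,v)=2$, hence $d_m(a_1,b_1)=d_m(a_k,b_\ell)=4$, establishing adjacent twice. For the distance-$6$ inequality, pendancy forces every shortest path from $c\in a\cup b$ to $x\in X-(a\cup b)$ to cross $qe$, so $d_m(c,x)=d_m(c,q)+1+d_m(e,x)$; a case analysis on $c$'s position along its side yields $d_m(c,q)\ge 3$, and since all leaves lie on blobs by our standing assumption, $e$ is an internal blob vertex with $d_m(e,x)\ge 2$, hence $d_m(c,x)\ge 6$. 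The Gromov-type inequality follows from $d_m(c,y)+d_m(c,z)-d_m(y,z)=2(d_m(c,q)+1)+(d_m(e,y)+d_m(e,z)-d_m(y,z))\ge 8$ by triangle inequality.

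For the backward direction, I first use adjacent twice to build the cycle through the chains. The length-$4$ path $a_1-p_{a_1}-v-p_{b_1}-b_1$ certifies a common neighbour $v$ of $p_{a_1},p_{b_1}$, and similarly $w$ for $p_{a_k},p_{b_\ell}$. For $k,\ell\ge 2$, degree-$3$ constraints force $v\ne w$, producing the cycle $p_{a_1}-v-p_{b_1}-\cdots-p_{b_\ell}-w-p_{a_k}-\cdots-p_{a_1}$ and thus placing $a,b$ on a common blob $B$. Next, I examine the third neighbours of $v$ and $w$: conditions (2) and (3) force them to meet at a single vertex $q$ along a theta-graph ``third side'' (any distinct-exit configuration produces external leaves whose distances violate (2) or (3)), identifying $B$ as a level-$2$ blob with the desired theta structure. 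Finally, condition (2) rules out any leaves on the third side (a leaf $c'$ there gives $d_m(a_1,c')=4<6$), and condition (3) rules out any additional non-trivial cut-edges on $B$: two cut-edge endpoints $q_1,q_2$ on the third side with external leaves $y$ reachable via $q_1$ and $z$ via $q_2$ yield $d_m(a_1,y)+d_m(a_1,z)-d_m(y,z)=6<8$. This gives pendancy and form $(k,\ell,0,0)$.

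The main obstacle is the case $k=\ell=1$, where adjacent twice reduces to the single condition $d_m(a_1,b_1)=4$ and degree arguments no longer force a second common neighbour of $p_{a_1},p_{b_1}$ automatically. Here condition (3) must be invoked directly. Let $u_1,u_2$ be the third neighbours of $p_{a_1},p_{b_1}$ (besides $a_1,v$ and $b_1,v$ respectively). If $u_1\ne u_2$ led via disjoint parts of the network to external leaves $y,z$, then $d_m(a_1,y)=2+d_m(u_1,y)$, $d_m(a_1,z)=4+d_m(u_2,z)$, and $d_m(y,z)=d_m(u_1,y)+d_m(u_2,z)+4$, giving $d_m(a_1,y)+d_m(a_1,z)-d_m(y,z)=2$, contradicting (3). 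Thus $u_1=u_2$, restoring a cycle (in fact a $4$-cycle $p_{a_1}-v-p_{b_1}-u_1-p_{a_1}$) containing both leaves, after which the rest of the backward argument proceeds as above to place this cycle inside a theta-graph pendant level-$2$ blob of form $(1,1,0,0)$.
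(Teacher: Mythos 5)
Your proof is correct and follows essentially the same route as the paper's: the forward direction is the same cut-edge distance decomposition the paper borrows from the $(1,0,0,0)$ case, and the backward direction likewise uses double adjacency to place the two chains on two sides of a common blob and then invokes the two distance conditions to exclude the level-$1$ configuration (Gromov product $2$) and any extra leaves or non-trivial cut-edges on the third side (the paper's bound $7-d_m(p,q)\le 6<8$ is your $6<8$ computation). You are in fact more explicit than the paper about how adjacent-twice yields the cycle and about the degenerate case $k=\ell=1$; the only loose end is the asymmetric case $k=1$, $\ell\ge 2$, where the degree argument alone does not force $v\ne w$, though there $v=w$ would leave $a_1$ outside every blob and is excluded by the section's standing assumption.
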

\begin{proof}
    One direction follows an analogous argument used in the proof of Lemma~\ref{lem:L2NetPendantL2BlobLeaf}.
    \medskip
    
    To show the other direction, suppose that~$a$ and~$b$ are adjacent twice, and for all~$c\in a\cup b$, we have~$d_m(c,x)\geq6$ for all~$x\in X-(a\cup b)$ and~$d_m(c,y)+d_m(c,z) - d_m(y,z)\geq8$ for any two leaves~$y,z\in X-(a\cup b)$.
    Since~$a$ and~$b$ are adjacent twice, either~$a$ and~$b$ are contained in the same level-$1$ blob such that the cycle of the blob is~$up_1p_2\ldots p_kvq_1q_2\ldots q_\ell u$ where~$p_i$ and~$q_j$ denote the neighbours of~$a_i$ and~$b_j$ for~$i\in[k], j\in[\ell]$, respectively, and~$u$ and~$v$ are incident to non-trivial cut-edges,
    or~$a$ and~$b$ are contained in the same level-$2$ blob~$B$ in which~$a$ and~$b$ are on two different sides of~$B$ and there are no other vertices that subdivide these two sides of~$B$ (see Figure~\ref{fig:L2NetPendantL2BlobTwoChains}).
    
    In the first case, let~$B$ denote the level-$1$ blob.
    We take leaves~$y$ and~$z$ that can be reached from~$B$ via the two non-trivial cut-edges.
    Without loss of generality, assume that~$k\leq \ell$. Then the shortest path from~$y$ to~$z$ must pass through the neighbours of~$a_i$ for all~$i\in[k]$.
    But then for any~$c\in a$, we have that
    \[d_m(c,y) + d_m(c,z) - d_m(y,z) = 2,\]
    which contradicts our original assumption.
    
    In the second case, let~$B$ denote the level-$2$ blob and let~$e$ denote the side of~$B$ that does not contain~$a$ nor~$b$.
    Since the network contains at least two blobs, the side~$e$ must be incident to at least one non-trivial cut-edge.
    Suppose for a contradiction that there are at least two cut-edges incident to the side~$e$.
    Let~$p$ and~$q$ denote the vertices on side~$e$ such that if~$k\ge2$ then they have shortest distance~$3$ and~$4$ from~$a_1$, respectively, and if~$k=1$ then they have shortest distance~$3$ and at most~$4$ from~$a_1$, respectively.
    Note first that the cut-edges incident to~$p$ and~$q$ must be non-trivial cut-edges -- otherwise this would contradict our assumption that for any leaf~$x\in X-(a\cup b)$, we have~$d_m(a_1,x)\geq 6$.
    Let~$y$ and~$z$ denote leaves that can be reached from~$B$ via the cut-edges incident to~$p$ and~$q$, respectively.
    Then
    \begin{align*}
        d_m(a_1,y) + d_m(a_1,z) - d_m(y,z) &\leq 3 + d_m(p,y) + 4 + d_m(q,z) - d_m(y,z)\\
        &= 7 - d_m(p,q)\\
        &\le 6,
    \end{align*}
    where the final inequality follows as~$d_m(p,q)>0$.
    This is a contradiction.
    Therefore there is exactly one cut-edge that is incident to the side~$e$, from which it follows that~$a$ and~$b$ are the only chains contained in a pendant level-$2$ blob of the form~$(k,\ell,0,0)$.
\end{proof}

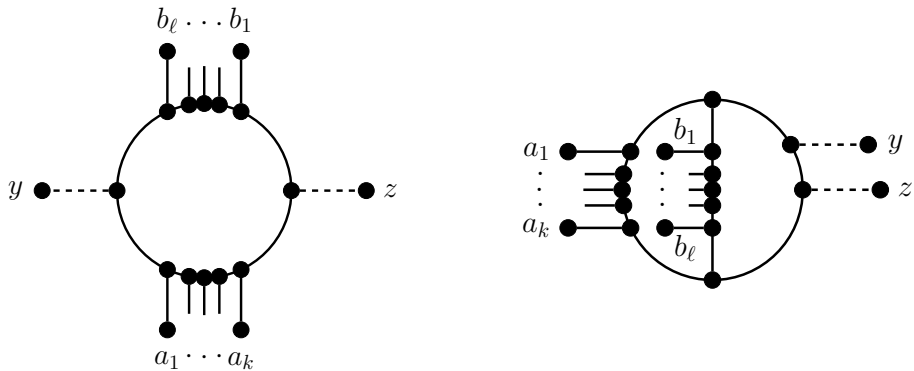
\begin{figure}[ht]
    \begin{subfigure}{.5\textwidth}
        \centering
        \resizebox{0.8\textwidth}{!}{
        \begin{tikzpicture}[every node/.style={draw, circle, fill, inner sep=0pt}]
         	\tikzset{edge/.style={very thick}}
            \draw[black, very thick] (0,0) circle (1.5);

            \node[] (west)  at (180:1.5)    {a};
            \node[] (east)  at (0:1.5)      {a};
            
            \node[] (p_1)   at (245:1.5)    {a};
            \node[] (p_2)   at (260:1.5)    {a};
            \node[] (p_3)   at (270:1.5)    {a};
            \node[] (p_4)   at (280:1.5)    {a};
            \node[] (p_k)   at (295:1.5)    {a};
            
            \node[] (q_1)   at (65:1.5)     {a};
            \node[] (q_2)   at (80:1.5)     {a};
            \node[] (q_3)   at (90:1.5)     {a};
            \node[] (q_4)   at (100:1.5)    {a};
            \node[] (q_l)   at (115:1.5)    {a};
            
            \node[left= 1 of west] (y) {a};
            \node[right=1 of east] (z) {a};
            \node[below=0.75 of p_1] (a_1) {a};
            \node[draw = none, fill = none, below=0.5 of p_2] (a_2) {};
            \node[draw = none, fill = none, below=0.5 of p_3] (a_3) {};
            \node[draw = none, fill = none, below=0.5 of p_4] (a_4) {};
            \node[below=0.75 of p_k] (a_k) {a};
            \node[above=0.75 of q_1] (b_1) {a};
            \node[draw = none, fill = none, above=0.5 of q_2] (b_2) {};
            \node[draw = none, fill = none, above=0.5 of q_3] (b_3) {};
            \node[draw = none, fill = none, above=0.5 of q_4] (b_4) {};
            \node[above=0.75 of q_l] (b_l) {a};

            \node[draw = none, fill = none, below=1mm of a_1]  (la_1)   {\large $a_1$};
            \node[draw = none, fill = none, below=0.55cm of a_2]  (la_2) {\large $\cdot$};
            \node[draw = none, fill = none, below=0.54cm of a_3]  (la_3) {\large $\cdot$};
            \node[draw = none, fill = none, below=0.55cm of a_4]  (la_4) {\large $\cdot$};
            \node[draw = none, fill = none, below=1mm of a_k]  (la_k)   {\large $a_k$};
            \node[draw = none, fill = none, above=1mm of b_1]  (lb_1)   {\large $b_1$};
            \node[draw = none, fill = none, above=0.55cm of b_2]  (lb_2) {\large $\cdot$};
            \node[draw = none, fill = none, above=0.54cm of b_3]  (lb_3) {\large $\cdot$};
            \node[draw = none, fill = none, above=0.55cm of b_4]  (lb_4) {\large $\cdot$};
            \node[draw = none, fill = none, above=1mm of b_l]  (lb_l)   {\large $b_\ell$};
            \node[draw = none, fill = none, left=1mm of y]     (ly)     {\large $y$};
            \node[draw = none, fill = none, right=1mm of z]    (lz)     {\large $z$};
            
            \draw[edge, dashed] (west) -- (y);
            \draw[edge, dashed] (east) -- (z);
            \draw[edge] (p_1) -- (a_1);
            \draw[edge] (p_2) -- (a_2);
            \draw[edge] (p_3) -- (a_3);
            \draw[edge] (p_4) -- (a_4);
            \draw[edge] (p_k) -- (a_k);
            \draw[edge] (q_1) -- (b_1);
            \draw[edge] (q_2) -- (b_2);
            \draw[edge] (q_3) -- (b_3);
            \draw[edge] (q_4) -- (b_4);
            \draw[edge] (q_l) -- (b_l);
            \end{tikzpicture}
            }
    \end{subfigure}\hfill
    \begin{subfigure}{.5\textwidth}
        \centering
        \resizebox{0.8\textwidth}{!}{
        \begin{tikzpicture}[every node/.style={draw, circle, fill, inner sep=0pt}]
         	\tikzset{edge/.style={very thick}}

            \draw[black, very thick] (0,0) circle (1.5);

            \node[] (north) at (90:1.5)  {a};
            \node[] (south) at (270:1.5) {a};
         	
         	\node[] (py) at (30:1.5)  {a};
         	\node[] (east) at (0:1.5) {a};
         	
            \node[] (p_1) at (155:1.5) {a};
            \node[] (p_2) at (170:1.5) {a};
            \node[] (p_3) at (180:1.5) {a};
            \node[] (p_4) at (190:1.5) {a};
            \node[] (p_k) at (205:1.5) {a};
         	
         	\node[] (q_1) at (0,{1.5*sin(155)}) {a};
         	\node[] (q_2) at (0,{1.5*sin(170)}) {a};
         	\node[] (q_3) at (0,{1.5*sin(180)}) {a};
         	\node[] (q_4) at (0,{1.5*sin(190)}) {a};
         	\node[] (q_l) at (0,{1.5*sin(205)}) {a};
         	
         	\node[right=1 of py]    (y) {a};
         	\node[right=1 of east]  (z) {a};
         	
         	\node[left=0.75 of p_1] (a_1) {a};
         	\node[draw = none, fill = none, left = 0.5 of p_2] (a_2) {};
         	\node[draw = none, fill = none, left = 0.5 of p_3] (a_3) {};
         	\node[draw = none, fill = none, left = 0.5 of p_4] (a_4) {};
         	\node[left=0.75 of p_k] (a_k) {a};
         	
         	\node[left=0.5 of q_1] (b_1) {a};
         	\node[draw = none, fill = none, left = 0.25 of q_2] (b_2) {};
         	\node[draw = none, fill = none, left = 0.25 of q_3] (b_3) {};
         	\node[draw = none, fill = none, left = 0.25 of q_4] (b_4) {};
         	\node[left=0.5 of q_l] (b_l) {a};
         	
            \node[draw = none, fill = none, left=1mm of a_1]  (la_1) {\large $a_1$};
            \node[draw = none, fill = none, left=0.55cm of a_2]  (la_2) {\large $\cdot$};
            \node[draw = none, fill = none, left=0.54cm of a_3]  (la_3) {\large $\cdot$};
            \node[draw = none, fill = none, left=0.55cm of a_4]  (la_4) {\large $\cdot$};
            \node[draw = none, fill = none, left=1mm of a_k]  (la_k) {\large $a_k$};
            \node[draw = none, fill = none, above right=0.3mm and 0.3mm of b_1]  (lb_1) {\large $b_1$};
            \node[draw = none, fill = none, left=0.25cm of b_2]  (lb_2) {\large $\cdot$};
            \node[draw = none, fill = none, left=0.25cm of b_3]  (lb_3) {\large $\cdot$};
            \node[draw = none, fill = none, left=0.25cm of b_4]  (lb_4) {\large $\cdot$};
            \node[draw = none, fill = none, below right=0.3mm and 0.3mm of b_l]  (lb_l) {\large $b_\ell$};
            \node[draw = none, fill = none, right=1mm of y]  (ly) {\large $y$};
            \node[draw = none, fill = none, right=1mm of z]  (lz) {\large $z$};
         	
         	\draw[edge, dashed] (py) -- (y);
         	\draw[edge, dashed] (east) -- (z);
         	\draw[edge]         (north) -- (south);
         	\draw[edge]         (p_1) -- (a_1);
         	\draw[edge]         (p_2) -- (a_2);
         	\draw[edge]         (p_3) -- (a_3);
         	\draw[edge]         (p_4) -- (a_4);
         	\draw[edge]         (p_k) -- (a_k);
         	\draw[edge]         (q_1) -- (b_1);
         	\draw[edge]         (q_2) -- (b_2);
         	\draw[edge]         (q_3) -- (b_3);
         	\draw[edge]         (q_4) -- (b_4);
         	\draw[edge]         (q_l) -- (b_l);
        \end{tikzpicture}
        }
    \end{subfigure}
    \caption{The two possibilities for when two chains~$a = (a_1,\ldots,a_k)$ and~$b = (b_1,\ldots,b_\ell)$ are adjacent twice and they are not contained in a pendant level-$2$ blob, as in the proof of Lemma~\ref{lem:L2NetPendantL2BlobTwoChains}. A level-$1$ blob (left) and a non-pendant level-$2$ blob (right).
    The dashed edges in both networks represent paths that are not trivial cut-edges from the blob to the leaves~$y$ and~$z$.
    In the non-pendant level-$2$ blob, there could be additional cut-edges on the side not containing the chains~$a$ and~$b$.}
    \label{fig:L2NetPendantL2BlobTwoChains}
\end{figure}

\begin{lemma}\label{lem:L2NetPendantL2BlobTwoChainsAdjust}
    Let~$N$ be a level-$2$ network on~$X$ that contains a pendant level-$2$ blob of the form~$(k,\ell,0,0)$ with chains~$a= (a_1,\ldots,a_k)$ and~$b = (b_1,\ldots,b_\ell)$.
    Then we can replace the pendant blob by a leaf~$z$ to obtain a network~$N'$ on~$X' = X\cup\{z\}-(a\cup b)$.
    For every~$x\in X'$, we can uniquely partition the multiset of distances~$d(x,a_1)$ into four equal sized sets~$A,B,C,D$ such that~$A - 3 = B-(\ell+4) = C-(k+2) = D-(k+\ell+3)$. Then the multisets of distances of~$N'$ contains the elements
    \[d^{N'}(x,y) =
    \begin{cases} 
        d^N(x,y)    & \text{if }x,y\in X'-\{z\} \\
        A-3         & \text{if }y=z.
    \end{cases}
    \]
\end{lemma}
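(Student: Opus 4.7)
The plan is to follow the same blueprint as Lemma~\ref{lem:L2NetPendantL1BlobAdjust} and Lemma~\ref{lem:L2NetPendantL2BlobChainAdjust}. Let $q$ denote the vertex of $B$ incident to the non-trivial cut-edge. First I would verify, by inspecting the structure of a pendant level-$2$ blob of the form $(k,\ell,0,0)$ as depicted on the right of Figure~\ref{fig:L2NetPendantL2BlobChain}, that there are exactly four paths from $q$ to $a_1$ inside $B$, of lengths $3$, $\ell+4$, $k+2$, and $k+\ell+3$. Letting $u$ and $v$ denote the two degree-$3$ generator vertices of $B$ so that $q$ subdivides the side between $u$ and $v$ that contains no chain (with chain $a$ on one other side and chain $b$ on the third side), the four options correspond to going out of $q$ via $u$ or via $v$, and to traversing the chain $b$ in between or not.

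Since $B$ is pendant, every path in $N$ from any leaf $x\in X'-\{z\}$ to $a_1$ must enter $B$ at $q$ and decomposes uniquely as the concatenation of a path from $x$ to $q$ outside $B$ with one of the four paths above. Hence as multisets,
\[ d^N(x,a_1) = d^N(x,q) + \{3^1, (k+2)^1, (\ell+4)^1, (k+\ell+3)^1\}, \]
so every path from $x$ to $q$ contributes exactly one element to each class $A$, $B$, $C$, $D$, with offsets $0$, $\ell+1$, $k-1$, and $k+\ell$ above the $A$-element respectively. In particular the cardinality of $d^N(x,a_1)$ is divisible by $4$ and each class has size $|d^N(x,q)|$.

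To extract the partition, I would apply the iterative smallest-element argument used in the earlier reduction lemmas: the minimum element $d$ of $d^N(x,a_1)$ must come from a shortest $x$-to-$q$ path combined with the length-$3$ path inside $B$, and so belongs to $A$; its partners at $d+(k-1)$, $d+(\ell+1)$, and $d+(k+\ell)$ are guaranteed to be in the multiset with matching multiplicities and go into $C$, $B$, $D$. Removing these four elements and recursing on the remainder yields the unique partition satisfying the required relation. The formula for $d^{N'}$ then follows directly: paths between leaves in $X'-\{z\}$ avoid $B$ since $B$ is pendant, so $d^{N'}(x,y)=d^N(x,y)$; and paths from $x$ to $z$ in $N'$ are in natural bijection with paths from $x$ to $q$ in $N$, giving $d^{N'}(x,z)=d^N(x,q)=A-3$. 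The main subtlety I anticipate is the bookkeeping when some offsets coincide (for instance when $k=1$ the $A$- and $C$-offsets both equal $3$, so the algorithm must peel off two copies of the minimum jointly), but this is handled by exactly the same multiplicity argument used in Lemma~\ref{lem:L2NetPendantL1BlobAdjust}.
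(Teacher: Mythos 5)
Your proposal is correct and follows essentially the same route as the paper, which simply declares this proof analogous to Lemma~\ref{lem:L2NetPendantL2BlobChainAdjust}: identify the attachment vertex $q$, observe the four internal paths from $q$ to $a_1$ of lengths $3$, $\ell+4$, $k+2$, $k+\ell+3$, decompose every $x$-to-$a_1$ path as an $x$-to-$q$ path plus one of these four, and peel off the partition by the iterative smallest-element argument. Your explicit treatment of the coinciding-offset case (e.g.\ $k=1$) is a welcome detail that the paper leaves implicit.
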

\begin{proof}
    The proof is analogous to that of Lemma~\ref{lem:L2NetPendantL2BlobChainAdjust}.
\end{proof}
\medskip

\paragraph{Chain-Adjacency Graphs}

We have now dealt with pendant level-$2$ blobs of the forms~$(k,0,0,0)$ (Lemmas~\ref{lem:L2NetPendantL2BlobChain} and~\ref{lem:L2NetPendantL2BlobLeaf}) and~$(k,\ell,0,0)$ (Lemma~\ref{lem:L2NetPendantL2BlobTwoChains}).
For the remaining four cases (ignoring symmetric cases) left to examine, $(k,0,m,0);$ $(k,0,m,n); (k,\ell,m,0);$ and~$(k,\ell,m,n)$, we employ the following graph.

\begin{definition}
    A \emph{chain-adjacency graph} (CAG) has a vertex for each chain, and between two vertices,
    \begin{itemize}
        \item we insert a red edge if the chains are adjacent once and two red edges if the chains are adjacent twice; and
        \item if the two chains are adjacent once, we insert a green edge for each length-$5$ path between endpoints of the chains (one per chain) that does not contain any edges of the two chains.
    \end{itemize}
\end{definition}

The condition for joining two vertices on the CAG via a green edge can indeed be verified from the multisets of distances.
Let~$a=(a_1,\ldots,a_k)$ and~$b=(b_1,\ldots,b_\ell)$ denote two chains that are adjacent once, and suppose without loss of generality that~$d_m(a_1,b_1) = 4$.
To count the number of green edges between~$a$ and~$b$, we fall into the~$9$ cases shown in Table~\ref{tab:GreenEdge}.
This number is obtained by taking the multiplicity of~$5$'s in the multiset of distances between a pair of endpoints, minus the number of length-$5$ paths that pass through edges of the chains.
Let~$(A,m_A) = d(a_1,b_1)$;~$(B,m_B) = d(a_1,b_\ell)$;~$(C,m_C) = d(a_k,b_1)$;~$(D,m_D) = d(a_k,b_\ell)$.

%The only cases in which we cannot simply count the number of times the element~$5$ appears in the distances between the chain endpoints is whenever either chain is of length~$2$.
%This is due to the fact that when, for example,~$k=2$, we have that one of the element~$5$ in~$d(a_2,b_1)$ corresponds to the path between $a_2$ and~$b_1$ that uses the shortest path from the neighbour of~$a_1$ to~$b_1$.
%Since the chains~$a$ and~$b$ are adjacent only once, this happens exactly once if one of~$k=2$ or~$\ell=2$, and exactly twice if~$k=2$ and~$\ell=2$.

\begin{table}[ht]
    \centering
    \begin{tabular}{l|l|l|l|l}
    \cline{2-4}
                                & $\ell=1$                                  & $\ell=2$                                         & $\ell>2$                                            &  \\ \cline{1-4}
    \multicolumn{1}{|l|}{$k=1$} & $m_A(5)$                   & $m_{A+B}(5)-1$        & $m_{A+B}(5)$          &  \\ \cline{1-4}
    \multicolumn{1}{|l|}{$k=2$} & $m_{A+C}(5)-1$ & $m_{A+B+C+D}(5)-2$ & $m_{A+B+C+D}(5)-1$ &  \\ \cline{1-4}
    \multicolumn{1}{|l|}{$k>2$} & $m_{A+C}(5)$   & $m_{A+B+C+D}(5)-1$ & $m_{A+B+C+D}(5)$  &  \\ \cline{1-4}
    \end{tabular}
    \caption{The number of green edges between two adjacent chains~$a = (a_1,\ldots,a_k)$ and~$b = (b_1,\ldots, b_\ell)$ for different~$k$ and~$\ell$ values.}
    \label{tab:GreenEdge}
\end{table}

We only insert green edges between chains that are adjacent, rather than between all chains that are distance-$5$ apart, to ensure that chains contained in different blobs are not connected in the CAG.
Since we may assume that all leaves are contained in blobs, we note that two chains are adjacent and in the same blob if and only if they are connected by a red edge in the CAG. 
Note that there may be multiple edges between two vertices in a CAG (see Figure~\ref{fig:CAG}).
We now show how we can use the CAG to distinguish the configurations of pendant blobs from non-pendant blobs, and how it can be used to distinguish the remaining level-$2$ pendant blob structures.

Observe that every edge in the CAG corresponds to a distinct distance-$4$ or distance-$5$ path between a pair of chain endpoints.
We say that this path in the network is \emph{covered} by the edge of the CAG.
In particular, we also say that the edges of the path of the network is \emph{covered} by this edge of the CAG.
Note that an edge of a network can be covered by more than one edge of the CAG.
See Figure~\ref{fig:CAG} (c) for an example of a distance-$5$ path that is covered by an edge in the CAG.

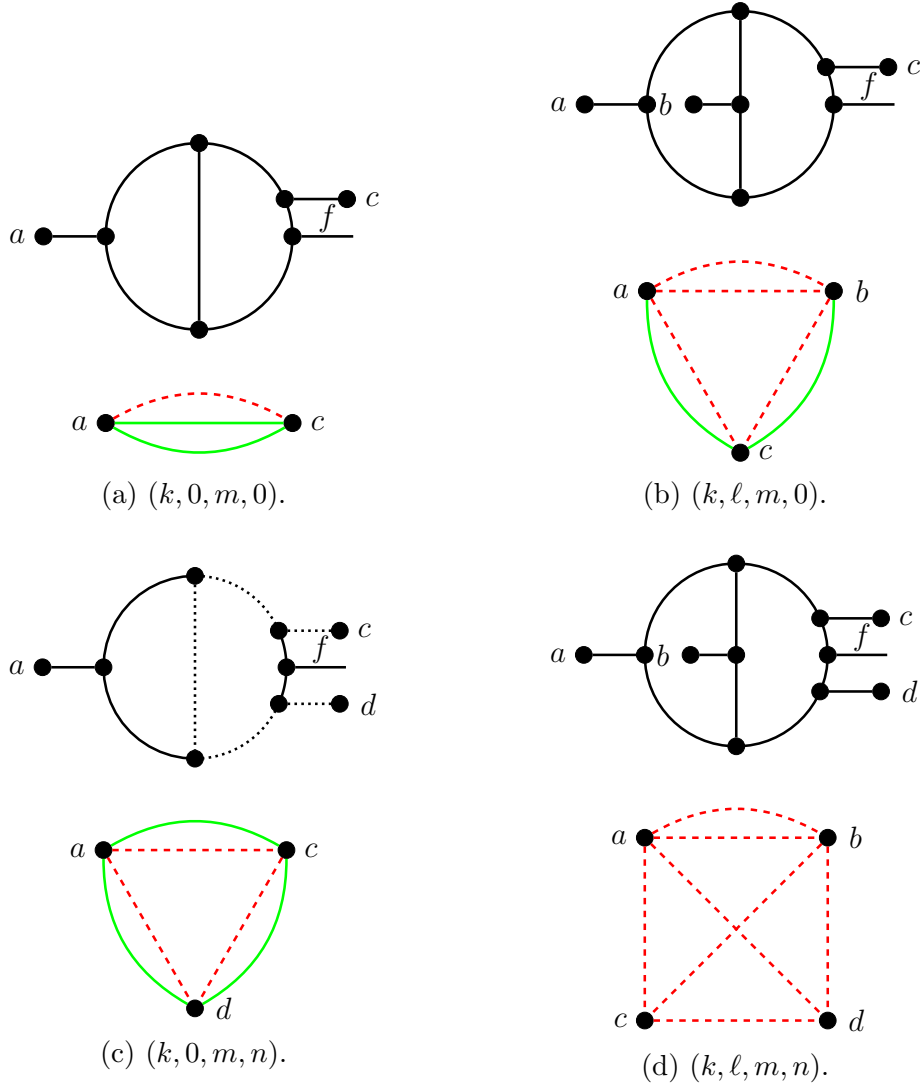
\begin{figure}
    \begin{subfigure}[t]{.475\textwidth}
        \centering
        \resizebox{0.8\textwidth}{!}{
        \begin{tikzpicture}[every node/.style={draw, circle, fill, inner sep=0pt}]
         	\tikzset{edge/.style={very thick}}
         	\begin{scope}[xshift=0cm,yshift=0cm]
                \draw[black, very thick] (0,0) circle (1.5);
                \node[] (u1)    at (0,1.5)          {a};
                \node[] (u2)    at (0,-1.5)         {a};
                \node[] (u3)    at (-1.5,0)         {a};
                \node[] (u4)    at (1.5,0)          {a};
                \node[] (u5)    at (1.3747,0.6)    {a};
                
                \node[] (a)     at (-2.5,0)         {a};
                \node[] (c)     at (2.3747,0.6)    {a};
                
                \node[draw = none, fill = none] (ce)    at (2.5,0)   {};
                \node[draw = none, fill = none, left=1mm of a]  (la) {\large $a$};
                \node[draw = none, fill = none, right=1mm of c] (lc) {\large $c$};
                
                \draw[edge] (u1) -- (u2);
                \draw[edge] (u4) -- (ce) node[draw = none, fill = none, above=0.01mm, midway]{\large $f$};
                \draw[edge] (u3) -- (a);
                \draw[edge] (u5) -- (c);
            \end{scope}
            \begin{scope}[xshift=0cm,yshift=-3cm]
                \node[] (a)     at (-1.5,0)         {a};
                \node[] (c)     at (1.5,0)          {a};
                \draw[edge, green]                  (a) -- (c);
                \draw[edge, bend right, green]      (a) edge (c);
                \draw[edge, bend left, red, dashed] (a) edge (c);
                
                \node[draw = none, fill = none, left=1mm of a]  (la) {\large $a$};
                \node[draw = none, fill = none, right=1mm of c] (lc) {\large $c$};
            \end{scope}
        \end{tikzpicture}
        }
        \caption{$(k,0,m,0)$.}
    \end{subfigure}\hfill
    \begin{subfigure}[t]{.475\textwidth}
        \centering
        \resizebox{0.8\textwidth}{!}{
        \begin{tikzpicture}[every node/.style={draw, circle, fill, inner sep=0pt}]
         	\tikzset{edge/.style={very thick}}
         	\begin{scope}[xshift=0cm,yshift=0cm]
                \draw[black, very thick] (0,0) circle (1.5);
                \node[] (u1)    at (0,1.5)          {a};
                \node[] (u2)    at (0,-1.5)         {a};
                \node[] (u3)    at (-1.5,0)         {a};
                \node[] (u4)    at (1.5,0)          {a};
                \node[] (u5)    at (1.3747,0.6)     {a};
                \node[] (u6)    at (0,0)            {a};
                
                \node[] (a)     at (-2.5,0)         {a};
                \node[] (b)     at (-0.75,0)        {a};
                \node[] (c)     at (2.3747,0.6)     {a};
                
                \node[draw = none, fill = none] (ce)    at (2.5,0)   {};
                \node[draw = none, fill = none, left=1mm of a]  (la) {\large $a$};
                \node[draw = none, fill = none, left=1mm of b]  (lb) {\large $b$};
                \node[draw = none, fill = none, right=1mm of c] (lc) {\large $c$};
                
                \draw[edge] (u1) -- (u2);
                \draw[edge] (u4) -- (ce) node[draw = none, fill = none, above=0.01mm, midway]{\large $f$};
                \draw[edge] (u3) -- (a);
                \draw[edge] (u5) -- (c);
                \draw[edge] (u6) -- (b);
            \end{scope}
            \begin{scope}[xshift=0cm,yshift=-3cm]
                \node[] (a)     at (-1.5,0)             {a};
                \node[] (b)     at (1.5,0)              {a};
                \node[] (c)     at (0,-2.598)           {a};
                \draw[edge, red, dashed]                (a) -- (b);
                \draw[edge, red, dashed]                (a) -- (c);
                \draw[edge, red, dashed]                (b) -- (c);
                \draw[edge, bend left, red, dashed]     (a) edge (b);
                \draw[edge, bend right, green]          (a) edge (c);
                \draw[edge, bend left, green]           (b) edge (c);

                \node[draw = none, fill = none, left=1mm of a]  (la) {\large $a$};
                \node[draw = none, fill = none, right=1mm of b]  (lb) {\large $b$};
                \node[draw = none, fill = none, right=1mm of c] (lc) {\large $c$};
            \end{scope}
        \end{tikzpicture}
        }
        \caption{$(k,\ell,m,0)$.}
    \end{subfigure}
    \vskip\baselineskip
    \begin{subfigure}{.475\textwidth}
        \centering
        \resizebox{0.8\textwidth}{!}{
        \begin{tikzpicture}[every node/.style={draw, circle, fill, inner sep=0pt}]
         	\tikzset{edge/.style={very thick}}
         	\begin{scope}[xshift=0cm,yshift=0cm]

                \node[] (u1)    at (0,1.5)          {a};
                \node[] (u2)    at (0,-1.5)         {a};
                \node[] (u3)    at (-1.5,0)         {a};
                \node[] (u4)    at (1.5,0)          {a};
                \node[] (u5)    at (1.3747,-0.6)    {a};
                \node[] (u6)    at (1.3747,0.6)     {a};

                \node[] (a)     at (-2.5,0)         {a};
                \node[] (d)     at (2.3747,-0.6)    {a};
                \node[] (c)     at (2.3747,0.6)     {a};
                
                \node[draw = none, fill = none] (ce)    at (2.5,0)   {};
                \node[draw = none, fill = none, left=1mm of a]  (la) {\large $a$};
                \node[draw = none, fill = none, right=1mm of c] (lc) {\large $c$};
                \node[draw = none, fill = none, right=1mm of d] (ld) {\large $d$};
                
                \draw[black, very thick, domain=0:23.59] plot ({1.5*cos(\x)}, {1.5*sin(\x)});
                \draw[black, very thick, domain=23.59:90, dotted] plot ({1.5*cos(\x)}, {1.5*sin(\x)});
                \draw[black, very thick, domain=90:270] plot ({1.5*cos(\x)}, {1.5*sin(\x)});
                \draw[black, very thick, domain=270:336.41, dotted] plot ({1.5*cos(\x)}, {1.5*sin(\x)});
                \draw[black, very thick, domain=336.41:360] plot ({1.5*cos(\x)}, {1.5*sin(\x)});
                
                \draw[edge, dotted] (u1) -- (u2);
                \draw[edge] (u4) -- (ce) node[draw = none, fill = none, above=0.01mm, midway]{\large $f$};
                \draw[edge] (u3) -- (a);
                \draw[edge, dotted] (u5) -- (d);
                \draw[edge, dotted] (u6) -- (c);
            \end{scope}
            \begin{scope}[xshift=0cm,yshift=-3cm]
                \node[] (aa)     at (-1.5,0)            {a};
                \node[] (cc)     at (1.5,0)             {a};
                \node[] (dd)     at (0,-2.598)          {a};
                \draw[edge, red, dashed]                (aa) -- (cc);
                \draw[edge, red, dashed]                (aa) -- (dd);
                \draw[edge, red, dashed]                (cc) -- (dd);
                \draw[edge, bend left, green]           (aa) edge (cc);
                \draw[edge, bend right, green]          (aa) edge (dd);
                \draw[edge, bend left, green]           (cc) edge (dd);

                \node[draw = none, fill = none, left=1mm of aa]  (la) {\large $a$};
                \node[draw = none, fill = none, right=1mm of cc]  (lc) {\large $c$};
                \node[draw = none, fill = none, right=1mm of dd] (ld) {\large $d$};
            \end{scope}
        \end{tikzpicture}
        }
        \caption{$(k,0,m,n)$.}
    \end{subfigure}\hfill
    \begin{subfigure}{.475\textwidth}
        \centering
        \resizebox{0.8\textwidth}{!}{
        \begin{tikzpicture}[every node/.style={draw, circle, fill, inner sep=0pt}]
         	\tikzset{edge/.style={very thick}}
         	\begin{scope}[xshift=0cm,yshift=0cm]
                \draw[black, very thick] (0,0) circle (1.5);
                \node[] (u1)    at (0,1.5)          {a};
                \node[] (u2)    at (0,-1.5)         {a};
                \node[] (u3)    at (-1.5,0)         {a};
                \node[] (u4)    at (1.5,0)          {a};
                \node[] (u5)    at (1.3747,0.6)     {a};
                \node[] (u6)    at (0,0)            {a};
                \node[] (u7)    at (1.3747,-0.6)    {a};
                
                \node[] (a)     at (-2.5,0)         {a};
                \node[] (b)     at (-0.75,0)        {a};
                \node[] (c)     at (2.3747,0.6)     {a};
                \node[] (d)     at (2.3747,-0.6)    {a};
                
                \node[draw = none, fill = none] (ce)    at (2.5,0)   {};
                \node[draw = none, fill = none, left=1mm of a]  (la) {\large $a$};
                \node[draw = none, fill = none, left=1mm of b]  (lb) {\large $b$};
                \node[draw = none, fill = none, right=1mm of c] (lc) {\large $c$};
                \node[draw = none, fill = none, right=1mm of d] (ld) {\large $d$};
                
                \draw[edge] (u1) -- (u2);
                \draw[edge] (u4) -- (ce) node[draw = none, fill = none, above=0.01mm, midway]{\large $f$};
                \draw[edge] (u3) -- (a);
                \draw[edge] (u5) -- (c);
                \draw[edge] (u6) -- (b);
                \draw[edge] (u7) -- (d);
            \end{scope}
            \begin{scope}[xshift=0cm,yshift=-3cm]
                \node[] (a)     at (-1.5,0)             {a};
                \node[] (b)     at (1.5,0)              {a};
                \node[] (c)     at (-1.5,-3)          {a};
                \node[] (d)     at (1.5,-3)           {a};
                \draw[edge, red, dashed]                (a) -- (b);
                \draw[edge, red, dashed]                (a) -- (c);
                \draw[edge, red, dashed]                (a) -- (d);
                \draw[edge, red, dashed]                (b) -- (c);
                \draw[edge, red, dashed]                (b) -- (d);
                \draw[edge, red, dashed]                (c) -- (d);
                \draw[edge, bend left, red, dashed]     (a) edge (b);

                \node[draw = none, fill = none, left=1mm of a]  (la) {\large $a$};
                \node[draw = none, fill = none, right=1mm of b] (lb) {\large $b$};
                \node[draw = none, fill = none, left=1mm of c]  (lc) {\large $c$};
                \node[draw = none, fill = none, right=1mm of d] (ld) {\large $d$};
            \end{scope}
        \end{tikzpicture}
        }
        \caption{$(k,\ell,m,n)$.}
    \end{subfigure}
    
    \caption{Each subfigure shows a pendant level-$2$ blob together with its CAG directly below it.
    On each blob,~$f$ denotes the non-trivial cut-edge.
    Each of the leaves~$a,b,c,d$ can be replaced by a longer chain whilst keeping the same CAG.
    By Theorem~\ref{thm:CAG}, we have that the network contains one of the four pendant blobs if and only if the CAG (which can be obtained from the multisets of distances) is exactly the one in the same subfigure.
    In the CAG, the dashed lines represent the red edges and the solid lines represent the green edges.
    In (c), the green edge~$cd$ in the CAG covers the dotted path between~$c$ and~$d$.}
    \label{fig:CAG}
\end{figure}

\begin{theorem}\label{thm:CAG}
    (See Figure~\ref{fig:CAG}.)
    Let~$N$ be a level-$2$ network on~$X$ with at least two blobs, where no pendant blobs are of the form~$(k,0,0,0)$ and~$(k,\ell,0,0)$ in which all leaves are contained in blobs. For~$k,\ell,m,n\geq1$,~$N$ contains a pendant level-$2$ blob of the form
    \begin{itemize}
        \item $(k,0,m,0)$ if and only if there exist vertices~$a$ and~$c$ which form a blob in the CAG with~$1$ red edge and~$2$ green edges between them.
        \item $(k,\ell,m,0)$ if and only if there exist vertices~$a,b,$ and~$c$ which form a blob in the CAG, where~$a$ and~$b$ are connected by~$2$ red edges and the other two pairs are connected by~$1$ red edge and~$1$ green edge.
        \item $(k,0,m,n)$ if and only if there exist vertices~$a,c,$ and~$d$ which form a blob in the CAG, where every pair of vertices are connected by~$1$ red edge and~$1$ green edge.
        \item $(k,\ell,m,n)$ if and only if there exist vertices~$a,b,c,$ and~$d$ which form a blob in the CAG, where every pair of vertices are connected by~$1$ red edge, and~$a$ and~$b$ are connected by an additional red edge.
    \end{itemize}
\end{theorem}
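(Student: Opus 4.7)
The plan is to prove each of the four biconditionals in turn, handling the forward direction by direct enumeration and the backward direction by a structural argument that pins down the blob containing the CAG substructure.

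For the forward direction, I would fix a pendant level-$2$ blob $B$ of the indicated form. Its generator is a theta graph with poles $u,v$, and the four slots of the $(k,\ell,m,n)$-notation correspond to the three internally-disjoint $u$-$v$ paths, with the side carrying the non-trivial cut-edge split in two by the cut-edge vertex $w$. I would then, for each pair of chains present in $B$, compute the shortest distances between chain endpoints (to count red edges) and enumerate length-$5$ paths between endpoints that avoid chain edges (to count green edges). Two chains on two ``free'' sides of the theta share both poles and are adjacent twice (two red edges); two chains on the two $w$-halves are adjacent only once (one red edge) and admit one green-edge alternate route through the opposite pole; mixed pairs yield one red and one green edge by a symmetric analysis. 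Collecting these counts across each of the four forms, and using pendancy of $B$ so that no path between chains of $B$ leaks into the rest of $N$, gives exactly the listed CAG patterns.

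For the backward direction, suppose the CAG contains the specified substructure on a vertex set $S$. Every pair of chains in $S$ is joined by a red edge, hence pairwise adjacent. Since all leaves lie in blobs, two chains in distinct blobs must have shortest endpoint-distance at least $5$ --- any path between them traverses a pair of cut-edges with at least one intra-blob step on either end --- so all chains of $S$ share a common blob $B$. A level-$1$ blob would place its chains on a single cycle, and the resulting red/green pattern cannot produce the required triangular or $K_4$-like subgraphs of the CAG (for instance, three chains on a cycle cannot be pairwise adjacent together with the right number of length-$5$ chord-free paths); hence $B$ is level-$2$. Its theta-generator has four slots, and matching the CAG signature against the forward-direction calculations forces both the distribution of the chains of $S$ among the slots and the pendancy of $B$. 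Any extra non-trivial cut-edge on $B$ would either subdivide an occupied slot (splitting one chain of $S$ into two and introducing a new CAG vertex with extra red edges) or subdivide an unoccupied slot (destroying a length-$5$ path counted as a green edge); either option alters the signature. Since the hypothesis excludes pendant blobs of forms $(k,0,0,0)$ and $(k,\ell,0,0)$, this forces $B$ to have the claimed form.

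The hardest part is the backward-direction case analysis, namely ruling out that a level-$2$ non-pendant blob or a level-$2$ pendant blob of a different form could accidentally produce the same CAG signature. Extra care is needed when some chains are short (length $1$ or $2$), because short chains create length-$5$ path coincidences that must be tracked separately; this is exactly the reason for the case split in Table~\ref{tab:GreenEdge}. Once these edge cases are accounted for, the four CAG signatures are seen to be pairwise mutually exclusive, and each is uniquely realized by its corresponding pendant form, completing the proof.
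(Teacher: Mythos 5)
Your proposal is correct and follows essentially the same strategy as the paper's proof: verify the four CAG signatures directly on the pendant forms, use red edges (adjacency) to localize all chains of the candidate substructure to a single blob, and then eliminate level-$1$ blobs, non-pendant level-$2$ blobs, and the two excluded pendant forms by showing each would perturb the signature. The only place the paper is more careful is in the non-pendant case, where it splits on whether an added non-trivial cut-edge increases the number of chains or leaves it unchanged (a cut-edge inserted between a chain's end and a pole does not split the chain, so your ``occupied slot implies a new CAG vertex'' dichotomy misses a sub-case, though the conclusion that the signature changes still holds there).
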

\begin{proof}
    All other possible pendant level-$2$ blobs are of the form~$(k,0,0,0)$ or of the form~$(k,\ell,0,0)$.
    The CAG of the blob of the form~$(k,0,0,0)$ is the singleton graph; the CAG of the blob of the form~$(k,\ell,0,0)$ is two vertices connected by~$2$ red edges.
    The CAG for either of these two pendant blobs is not the same as any of the CAG for the four pendant blobs that we investigate here.
    Therefore we may distinguish the CAG of the pendant level-$2$ blobs from one another.
    
    Now we consider non-pendant level-$2$ blobs.
    First, if the blob contains no leaves then the CAG of such a blob is empty, so we are done.
    Hence, suppose that some non-pendant level-$2$ blob~$B$ contains some leaves.
    Observe that~$B$ can be obtained by introducing non-trivial cut-edges to one of the six possible level-$2$ pendant blobs.
    
    Suppose first that~$B$ can be obtained by introducing non-trivial cut-edges to a pendant blob of the form~$(k,0,0,0)$.
    Then,~$B$ contains one or more chains on one side of the blob, and the possible CAGs would be a path (or disjoint paths) of red edges that connect adjacent chains, or if it contains a green edge, two vertices that are connected by~$1$ red and~$1$ green edge. 
    However, none of these CAGs correspond to that of the four pendant blobs we consider here.
    
    Now suppose that~$B$ can be obtained by introducing non-trivial cut-edges to a pendant blob of the form~$(k,\ell,0,0)$.
    Then,~$B$ contains one or more chains on two sides of the blob, and at least one non-trivial cut-edge on the third side.
    None of the edges in the CAG of~$B$ will cover an edge of this third side, since all paths between chain endpoints that uses this side will be of length at least~$6$.
    Therefore the only possible CAGs we can get on~$B$ is a cycle or a path (or paths) of red edges, or two vertices connected by~$1$ red and~$1$ green edge.
    
    Suppose now that~$B$ can be obtained by introducing non-trivial cut-edges to one of the four remaining level-$2$ pendant blobs.
    Upon introducing non-trivial cut-edges to the pendant blob, either the number of chains on the blob increases or stays the same.
    
    Suppose first that this number increases.
    In each of the four pendant blobs, we note that every chain is adjacent to every other chain on the blob.
    It is easy to check that adding non-trivial cut-edges to a pendant blob, which results in the increase in the number of chains on the blob, will return a blob in which every chain is not adjacent to every other chain.
    In particular, one side of~$B$ will contain at least two chains.
    \begin{itemize}
    	\item It follows from here that at most three chains are pairwise adjacent in~$B$.
    	Therefore, non-pendant level-$2$ blobs cannot have a CAG that is the same as that of a pendant blob of the form~$(k,\ell,m,n)$.
    	\item So suppose there are three pairwise adjacent chains in~$B$.
    There are two cases.
    Either the three chains are contained in distinct sides of~$B$, or two of the three chains are contained in the same side of~$B$.
    In the former case, we note that there is at least one side of~$B$ that contains two chains.
    Then, one of the three pairwise adjacent chains contained in this side of~$B$ cannot have an edge from it to the two other chains in the CAG, except for the red edge that shows their adjacency.
    In the latter case, there are exactly two chains on one side of~$B$ and one chain on another side of~$B$ that make up the pairwise adjacent chains.
    An edge between the chain vertices in the CAG excluding the red edge, if it exists, must correspond to some path between chain endpoints that uses the edges of the third side of~$B$.
    But since~$B$ is a non-pendant blob, there must be at least one non-trivial cut-edge on this third side of~$B$.
    Therefore any path between chain endpoints that uses this side must be of length at least~$6$.
    This implies that within the CAG, the three pairwise adjacent chains are connected by a single red edge between all pairs of vertices.
    Therefore, non-pendant level-$2$ blobs cannot have a CAG that is the same as that of a pendant blob of the form~$(k,\ell,m,0)$ nor~$(k,0,m,n)$.
    	\item Finally suppose that there are two chains that are adjacent in~$B$.
    For the CAG of~$B$ on these two vertices to be the same as that of~$(k,0,m,0)$, we would need for the two distance-$5$ paths between chain endpoints to pass through (collectively) all three sides of~$B$.
    However, there are at least two chains contained in one side of~$B$, and thus at least one of these two distance-$5$ paths cannot exist.
    Therefore, non-pendant level-$2$ blobs cannot have a CAG that is the same as that of a pendant blob of the form~$(k,0,m,0)$.
    \end{itemize}
    
    On the other hand suppose that the number of chains on the blob stays the same upon adding non-trivial cut-edges to one of the four level-$2$ pendant blobs.
    Note that for these four cases, all edges of the pendant level-$2$ blobs that do not join the neighbours of leaves of the same chain are covered by at least one of the edges in its CAG.
    Upon inserting non-trivial cut-edges to obtain~$B$, we see a change in color of the CAG edge that used to cover the bisected edge (from red to green), or a possible deletion of the edge (if the edge was green to begin with).
    This will clearly result in a blob~$B$ with a CAG that is different to that of the four level-$2$ pendant blobs we consider here.

    Now we consider the CAG of a level-$1$ blob.
    Observe that a CAG of a level-$1$ blob contains a green edge if and only if the level-$1$ blob contains two chains~$a= (a_1,\ldots,a_k)$ and~$b=(b_1,\ldots,b_\ell)$ such that the cycle of the blob is~$up_1p_2\ldots p_kvwq_1q_2\ldots q_\ell u$, where~$p_i$ and~$q_j$ denote the neighbour of~$a_i$ and~$b_j$ for~$i\in[k], j\in[\ell]$, respectively, and~$u,v$ and~$w$ are incident to non-trivial cut-edges.
    This does not result in any of the CAGs of the four pendant level-$2$ blobs.
    Therefore the CAG of a level-$1$ blob cannot be the same as that of a pendant level-$2$ blob of the forms~$(k,0,m,0); (k,\ell,m,0); (k,0,m,n)$.
    Furthermore, at most~$3$ chains can be pairwise adjacent on a level-$1$ blob.
    Hence the CAG of a level-$1$ blob cannot be the same as that of a pendant level-$2$ blob of the form~$(k,\ell,m,n)$.
\end{proof}
    
    Note that pendant level-$2$ blobs of the form~$(k,0,m,0)$ and~$(m,0,k,0)$ will have the same CAG; however, it is straightforward to find the chain that is on the same side of the blob as the non-trivial cut-edge.
    Given the two chains~$a$ and~$c$ in this case,~$c$ is on the same side of the blob as the non-trivial cut-edge if and only if~$|d(x,c_m)| < |d(x,a_k)|$ for all~$x\in X - (a\cup c)$.
    Note also that we may identify the leaf on the chain that is closest to the non-trivial cut-edge, by taking the same leaf~$x$ and letting~$c_m$ be the chain endpoint satisfying~$d_m(c_m,x) < d_m(c_1,x)$.
    A similar argument holds for the pendant level-$2$ blob of the form~$(k,0,m,n)$, in identifying which chain is on the side of the blob without the non-trivial cut-edge.
    We now seek to replace these pendant level-$2$ blobs by a single leaf~$z$ and alter the multisets of distances accordingly.

\begin{lemma}\label{lem:CAG}
    Let~$k,l,m,n\geq1$, and let~$B$ be a pendant level-$2$ blob that is of the form~$(k,0,m,0)$; $(k,\ell,m,0)$; $(k,0,m,n)$; or~$(k,\ell,m,n)$.
    Then we can replace the pendant blob by a leaf~$z$ to obtain a network~$N'$ on~$X' = X\cup \{z\}-(a\cup b\cup c\cup d)$, such that the multisets of distances of~$N'$ contains the elements
    \[d^{N'}(x,y) = 
    \begin{cases}
        d^N(x,y)    & \text{if }x,y\in X'-\{z\} \\
        A-2         & \text{if }y=z,
    \end{cases}
    \]
    where if~$B$ is of the form
    \begin{itemize}
        \item $(k,0,m,0)$, then we uniquely partition~$d(x,c_m)$ into three equal sized sets~$A,B,C$ such that~$A-2 = B-(m+3) = C-(k+m+3)$.
        \item $(k,\ell,m,0)$, then we uniquely partition~$d(x,c_m)$ into three equal sized sets~$A,B,C$ such that~$A-2 = B-(\ell+m+3) = C-(k+m+3)$.
        \item $(k,0,m,n)$, then we uniquely partition~$d(x,c_m)$ into three equal sized sets~$A,B,C$ such that~$A-2 = B-(m+n+3) = C-(k+m+n+3)$.
        \item $(k,\ell,m,n)$, then we uniquely partition~$d(x,c_m)$ into three equal sized sets~$A,B,C$ such that~$A-2 = B-(\ell+m+n+3) = C-(k+m+n+3)$.
    \end{itemize}
\end{lemma}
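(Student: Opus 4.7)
The plan is to mirror the approach of Lemmas~\ref{lem:L2NetPendantL1BlobAdjust}, \ref{lem:L2NetPendantL2BlobChainAdjust}, and \ref{lem:L2NetPendantL2BlobTwoChainsAdjust}. Let $q$ denote the unique vertex of the pendant blob incident to the non-trivial cut-edge. Because the blob is pendant, every path in $N$ from a leaf $x \in X'-\{z\}$ into the blob passes through $q$ and uses the non-trivial cut-edge exactly once, so each path from $x$ to $c_m$ in $N$ decomposes uniquely as a concatenation of a path from $x$ to $q$ (using no edges of the blob) with a simple path from $q$ to $c_m$ (using only edges of the blob). Consequently $|d^N(x,c_m)| = 3\,|d^N(x,q)|$, provided there are exactly three simple paths from $q$ to $c_m$ inside the blob.

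The first step is to verify this count and identify the three path lengths in each blob form. The generator of any level-$2$ blob is a theta graph, with two degree-$3$ vertices $u,v$ connected by three internally vertex-disjoint paths. By hypothesis chain $c$ lies on the same theta-side as the non-trivial cut-edge attachment, and by the choice of $c_m$ as the endpoint of chain $c$ closest to $q$, the neighbour $p_m$ of $c_m$ is adjacent to $q$; thus the direct path $q \to p_m \to c_m$ has length $2$. The remaining two simple paths leave $q$ in the opposite direction along its side, pass through chain $d$ to $v$, traverse one of the other two sides (via chain $a$ or chain $b$) back to $u$, and then run through $p_1,\ldots,p_m$ to $c_m$. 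Summing the segment lengths with the given chain sizes yields, respectively, the pairs $(m+3,\,k+m+3)$, $(\ell+m+3,\,k+m+3)$, $(m+n+3,\,k+m+n+3)$, and $(\ell+m+n+3,\,k+m+n+3)$ for the four blob forms, matching the offsets in the lemma statement.

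With these three length options in hand, $d^N(x,c_m)$ partitions naturally into three equal-size multisets $A$, $B$, $C$ satisfying the prescribed linear relations. Uniqueness of the partition follows by the iterative minimum argument used in Lemma~\ref{lem:L2NetPendantL1BlobAdjust}: the minimum of $d^N(x,c_m)$ must sit in $A$ because any element of $B$ or $C$ has a strictly smaller companion in $A$ by a fixed positive offset; this pins down the corresponding elements in $B$ and $C$, and one recurses on the remaining multiset.

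Finally, replacing the blob by the leaf $z$ yields a bijection between paths in $N$ from $x$ to $q$ and paths in $N'$ from $x$ to $z$, so $d^{N'}(x,z) = d^N(x,q) = A-2$; and for $x,y \in X'-\{z\}$ all paths between $x$ and $y$ in $N$ avoid the pendant blob entirely, so $d^{N'}(x,y) = d^N(x,y)$. I expect the main obstacle to be the careful bookkeeping in the second step: one must verify the three path lengths for all four forms uniformly (including the degenerate cases $\ell=0$ or $n=0$) and justify that no further simple paths from $q$ to $c_m$ exist inside the blob beyond the three enumerated, which in turn follows from the theta-graph structure of every level-$2$ generator.
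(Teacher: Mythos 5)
Your proof is correct and follows essentially the same route as the paper, which simply declares the result analogous to Lemma~\ref{lem:L2NetPendantL2BlobChainAdjust}: decompose each $x$--$c_m$ path at the cut-edge endpoint $q$, enumerate the three simple $q$--$c_m$ paths in the theta-shaped blob to get the stated length offsets, and recover the unique partition by the iterative-minimum argument. Your write-up actually supplies more of the bookkeeping (the explicit path-length sums for all four forms) than the paper does.
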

\begin{proof}
    The proof is analogous to that of Lemma~\ref{lem:L2NetPendantL2BlobChainAdjust}.
\end{proof}

\medskip

We are now ready to prove Theorem~\ref{thm:L2ReconstructibleMultiset}.

\begin{proof}[Proof of Theorem~\ref{thm:L2ReconstructibleMultiset}]
    Let~$N$ be a level-$2$ network on~$X$.
    We show by induction on~$|E(N)|$, the number of edges in~$N$, that level-$2$ networks are reconstructible from their multisets of distances.
    
    If~$N$ contains a cherry or a leaf that is not contained in a blob, then we can identify these structures and reduce them accordingly by Observation~\ref{obs:Cherry} or Lemma~\ref{lem:LeafBlobAdjacencyAdjust}, respectively.
    Then upon reconstructing the reduced network by the induction hypothesis, we can undo the reduction by either replacing the leaf by a cherry or by reattaching the deleted leaf to the rightful cut-edge by Lemma~\ref{lem:LeafBlobAdjacencyAdjust}.
    If~$N$ is a network on a single blob, then we may reconstruct it from its shortest distances by Lemma~\ref{lem:L2 1BSD}, and therefore from its multisets of distances.

    We now assume that~$N$ is a level-$2$ network on at least two blobs such that every leaf is contained within blobs and that there are no pendant subtrees.
    We show that we may identify pendant blobs and replace them by a leaf.
    First note that a chain on a pendant level-$1$ blob can be identified by Lemma~\ref{lem:L2NetPendantL1Blob};
    Lemma~\ref{lem:L2NetPendantL1BlobAdjust} outlines how we can replace the blob by a leaf~$z$ and adjust the multisets of distances accordingly.
    It is easy to reconstruct the blob after reconstructing the reduced network, since we know the chain that is contained in the blob.
    For pendant level-$2$ blobs, recall that they are of the form~$(k,\ell,m,n)$ where~$k,\ell,m,n\geq0$. 
    The following list shows how all possible pendant level-$2$ blobs can be identified with one of the lemmas that we have proven before:
    \begin{itemize}
        \item $(k,0,0,0)$ by Lemmas~\ref{lem:L2NetPendantL2BlobChain} and~\ref{lem:L2NetPendantL2BlobLeaf};
        \item $(k,\ell,0,0)$ by Lemma~\ref{lem:L2NetPendantL2BlobTwoChains}; and
        \item $(k,0,m,0); (k,0,m,n); (k,\ell,m,0);$ and~$(k,\ell,m,n)$ by Theorem~\ref{thm:CAG}.
    \end{itemize}
    Replacing the pendant level-$2$ blobs by a leaf~$z$ and adjusting the multisets of distances accordingly for each case has been outlined in Lemmas~\ref{lem:L2NetPendantL2BlobChainAdjust},~\ref{lem:L2NetPendantL2BlobTwoChainsAdjust}, and~\ref{lem:CAG}.
    It is easy to reconstruct the blob after reconstructing the reduced network, since we know which chains are on the same side of the blob as the non-trivial cut-edge.
    
    Observe that every level-$2$ network has a cherry, exactly one blob, a leaf that is not contained in a blob, or a pendant blob.
    We have now shown that it is possible to identify these structures, to reduce them, and to add these structures back to the reduced network to obtain the original network.
    All these reductions decrease the number of edges of the network.
    Then by the induction hypothesis, we may reconstruct the reduced network from the modified distance matrix -- since we can obtain the original network from the reduced network for each case, this completes the proof.
\end{proof}

%--------------------------------------------------------------------
%-------------------------- Discussion --------------------------
%--------------------------------------------------------------------

\section{Discussion}\label{sec:Discussion}

We have considered the fundamental question of deciding which networks are uniquely reconstructible from 
the pairwise graph-theoretical distances between their leaves.
We showed that level-1 networks are reconstructible from their shortest distances and that level-2 networks are reconstructible from their multisets of distances. 
We have also shown that networks of level higher than~$1$ and level higher than~$2$ are not reconstructible from their shortest distances and multisets of distances in general, respectively (Lemmas~\ref{lem:L2NSD} and~\ref{lem:LkNSM}).

From a practical perspective, having the multisets of distances is not very realistic. 
For example, starting with sequence data, it is not clear how multisets could be
produced in an accurate and efficient manner.
As stated in~\cite{bordewich2016algorithm}, while it may be possible to obtain `...the set of different evolutionary path weights between a given pair of taxa, it seems hard to imagine how one might manage to measure the number of distinct evolutionary paths of a given observed weight.'
Naturally, this points to the idea that perhaps we should investigate other types of distance matrices that are more restrictive when compared to the multisets of distances, that may be relatively easy to obtain from sequence data. 
Therefore in future research, it would be of interest to consider other distance matrices such as tree-average distances~\cite{willson2012tree} and sets of distances~\cite{bordewich2016algorithm}.
In particular, the two level-$2$ networks in Figure~\ref{fig:level2} have the same shortest distance matrices, but different sets of distances  (i.e., 
the underlying sets of their multisets of distances are different).
Therefore the question of whether a level-$2$ network is 
reconstructible from its set of distances remains open.

On a similar note, we wonder if there is some characterization of 
level-$2$ networks that are reconstructible from their shortest distances.
We have already seen instances of this, for example when 
the level-$2$ network contains at most~$3$ leaves (Lemma~\ref{lem:L2<3ShortestRecon}) 
and when the network contains exactly one blob (Lemma~\ref{lem:L2 1BSD}). 
We conjecture that if every side of all blobs have enough incident edges, then 
they should provide enough information for unique reconstructibility.
To motivate this conjecture, 
note that the networks in Figure~\ref{fig:level2} contain a level-$2$ blob of the form~$(2,0,0,0)$.
If every level-$2$ blob has at least two sides with enough cut-edges incident to them so 
that when they become pendant blobs upon reducing the network 
they are not of the form~$(k,0,0,0)$, then is the network reconstructible from its shortest distances?
A similar question can be posed for level-$k$ networks for~$k\ge3$.
Can we characterize level-$k$ networks that are reconstructible from 
their multisets of distances, or possibly from their shortest distances?

On the algorithmic side, the proofs of Theorems~\ref{thm:L1ReconstructibleShortPath} and~\ref{thm:L2ReconstructibleMultiset} outline the steps that can 
be taken to construct networks from distance data.
Indeed, in both the level-$1$ and the level-$2$ cases, we describe 
how one can identify a cherry or a pendant blob, reduce it to a single leaf, and adjust the new distance matrices.
Since all networks contain either a cherry or a pendant blob, we may 
recurse on the reduced instances until there is a tree or a single blob in the 
network, at which point we are done. The important question as to whether this 
algorithm can run in polynomial time remains open.

In practice, even if we are able to find efficient algorithms 
that can uniquely construct level-$1$/level-$2$ networks from their shortest/multisets of distances,
it is important to bear in mind that variations in distances 
arising from real data sets may lead to inconsistencies which cannot be handled by
such algorithms.
One way to deal with such inconsistencies 
would be to consider a slight variant of the problem that we have solved.
As in~\cite{chang2018reconstructing}, we may wish to find an 
unrooted network in which the distance matrix elements correspond 
to the length of some, not necessarily the shortest, path between two taxa.
Though we suspect that the output network will not necessarily be unique, it 
could nonetheless provide a solution that is consistent with the input data
and therefore a useful starting point for making biological deductions.

Finally, a natural extension would be to see if our results generalize to edge-weighted networks.
In addition to considering the network topology, weighted networks take into account edge weights 
which can, for example, represent the amount of genetic divergence that 
has occurred along each edge of the network.
It has been shown that this additional information on the networks can lead to distinguishing two rooted networks on different topologies that display the same set of data (e.g., consider the three distinct rooted level-$1$ networks on three leaves that display the same set of trees)~\cite{pardi2015reconstructible}.
For level-$1$ networks (or for the more general cactus graphs), it was shown recently that while there may exist multiple level-$1$ networks that realize the same shortest distance matrix, there is a unique optimal edge-weighted network whose sum of edge weights is minimal~\cite{hayamizu2020recognizing}.
 It was also 
noted that this is not the case for 
edge weighted, level-$2$ networks by considering an example presented in~\cite{althofer1988optimal}. It could thus
be of interest to ask whether if we consider optimality in terms of the multisets of distances
instead, then is there a unique optimal level-2 network?

\paragraph{Acknowledgements} 

Research funded in part by the Netherlands Organization for Scientific Research (NWO), including Vidi grant 639.072.602, and partly by the 4TU Applied Mathematics Institute. 
Vincent Moulton thanks the Netherlands Organization for Scientific Research (NWO), including Vidi grant 639.072.602, for its support to visit TU Delft.
\medskip

We would like to thank Remie Janssen and Mark Jones for helpful discussions and providing feedback on multiple versions of the paper.

%% The Appendices part is started with the command \appendix;
%% appendix sections are then done as normal sections
%% \appendix

%% \section{}
%% \label{}

%% If you have bibdatabase file and want bibtex to generate the
%% bibitems, please use
%%
%%  \bibliographystyle{elsarticle-num} 
%%  \bibliography{<your bibdatabase>}

\begin{thebibliography}{10}
	\expandafter\ifx\csname url\endcsname\relax
	\def\url#1{\texttt{#1}}\fi
	\expandafter\ifx\csname urlprefix\endcsname\relax\def\urlprefix{URL }\fi
	\expandafter\ifx\csname href\endcsname\relax
	\def\href#1#2{#2} \def\path#1{#1}\fi
	
	\bibitem{felsenstein2004inferring}
	J.~Felsenstein, Inferring phylogenies, Vol.~2, Sinauer associates 
	Sunderland,
	MA, 2004.
	
	\bibitem{bapteste2013networks}
	E.~Bapteste, L.~van Iersel, A.~Janke, S.~Kelchner, S.~Kelk, J.~O. McInerney,
	D.~A. Morrison, L.~Nakhleh, M.~Steel, L.~Stougie, J.~Whitfield, Networks:
	expanding evolutionary thinking, Trends in Genetics 29~(8) (2013) 439--441.
	
	\bibitem{huson2010phylogenetic}
	D.~H. Huson, R.~Rupp, C.~Scornavacca, Phylogenetic networks: concepts,
	algorithms and applications, Cambridge University Press, 2010.
	
	\bibitem{huber2019rooting}
	K.~T. Huber, L.~van Iersel, R.~Janssen, M.~Jones, V.~Moulton, Y.~Murakami,
	C.~Semple, Rooting for phylogenetic networks, arXiv preprint 
	arXiv:1906.07430
	(2019).
	
	\bibitem{jin2006maximum}
	G.~Jin, L.~Nakhleh, S.~Snir, T.~Tuller, Maximum likelihood of phylogenetic
	networks, Bioinformatics 22~(21) (2006) 2604--2611.
	
	\bibitem{van2018polynomial}
	L.~van Iersel, R.~Janssen, M.~Jones, Y.~Murakami, N.~Zeh, Polynomial-time
	algorithms for phylogenetic inference problems, in: International Conference
	on Algorithms for Computational Biology, Springer, 2018, pp. 37--49.
	
	\bibitem{Murakami2019}
	Y.~Murakami, L.~van Iersel, R.~Janssen, M.~Jones, V.~Moulton, Reconstructing
	tree-child networks from reticulate-edge-deleted subnetworks, Bulletin of
	mathematical biology 81~(10) (2019) 3823--3863.
	
	\bibitem{van2019practical}
	L.~van Iersel, R.~Janssen, M.~Jones, Y.~Murakami, N.~Zeh, A practical
	fixed-parameter algorithm for constructing tree-child networks from multiple
	binary trees, arXiv preprint arXiv:1907.08474 (2019).
	
	\bibitem{bryant2004neighbor}
	D.~Bryant, V.~Moulton, Neighbor-net: an agglomerative method for the
	construction of phylogenetic networks, Molecular biology and evolution 
	21~(2)
	(2004) 255--265.
	
	\bibitem{bordewich2018recovering}
	M.~Bordewich, K.~T. Huber, V.~Moulton, C.~Semple, Recovering normal networks
	from shortest inter-taxa distance information, Journal of mathematical
	biology (2018) 1--24.
	
	\bibitem{sokal1958statistical}
	R.~R. Sokal, A statistical method for evaluating systematic relationship,
	University of Kansas science bulletin 28 (1958) 1409--1438.
	
	\bibitem{saitou1987neighbor}
	N.~Saitou, M.~Nei, The neighbor-joining method: a new method for 
	reconstructing
	phylogenetic trees., Molecular biology and evolution 4~(4) (1987) 406--425.
	
	\bibitem{pardi2016distance}
	F.~Pardi, O.~Gascuel, Distance-based methods in phylogenetics, 1st Edition,
	Elsevier, 2016, pp. 458--465.
	
	\bibitem{makarenkov2001t}
	V.~Makarenkov, {T-REX}: reconstructing and visualizing phylogenetic trees 
	and
	reticulation networks, Bioinformatics 17~(7) (2001) 664--668.
	
	\bibitem{chan2006reconstructing}
	H.-L. Chan, J.~Jansson, T.-W. Lam, S.-M. Yiu, Reconstructing an ultrametric
	galled phylogenetic network from a distance matrix, Journal of 
	bioinformatics
	and computational biology 4~(04) (2006) 807--832.
	
	\bibitem{chang2018reconstructing}
	K.-Y. Chang, Y.~Cui, S.-M. Yiu, W.-K. Hon, Reconstructing one-articulated
	networks with distance matrices, Journal of Computational Biology 25~(3)
	(2018) 253--269.
	
	\bibitem{bryant2007consistency}
	D.~Bryant, V.~Moulton, A.~Spillner, Consistency of the neighbor-net 
	algorithm,
	Algorithms for Molecular Biology 2~(1) (2007) 8.
	
	\bibitem{bordewich2016determining}
	M.~Bordewich, C.~Semple, Determining phylogenetic networks from inter-taxa
	distances, Journal of mathematical biology 73~(2) (2016) 283--303.
	
	\bibitem{bordewich2016algorithm}
	M.~Bordewich, N.~Tokac, An algorithm for reconstructing ultrametric 
	tree-child
	networks from inter-taxa distances, Discrete applied mathematics 213 (2016)
	47--59.
	
	\bibitem{bordewich2018constructing}
	M.~Bordewich, C.~Semple, N.~Tokac, Constructing tree-child networks from
	distance matrices, Algorithmica 80~(8) (2018) 2240--2259.
	
	\bibitem{willson2006unique}
	S.~J. Willson, Unique reconstruction of tree-like phylogenetic networks from
	distances between leaves, Bulletin of mathematical biology 68~(4) (2006)
	919--944.
	
	\bibitem{hayamizu2020recognizing}
	M.~Hayamizu, K.~T. Huber, V.~Moulton, Y.~Murakami, Recognizing and realizing
	cactus metrics, Information Processing Letters (2020) 105916.
	
	\bibitem{gambette2012quartets}
	P.~Gambette, V.~Berry, C.~Paul, Quartets and unrooted phylogenetic networks,
	Journal of bioinformatics and computational biology 10~(04) (2012) 1250004.
	
	\bibitem{geller1969reconstruction}
	D.~Geller, B.~Manvel, Reconstruction of cacti, Canadian Journal of 
	Mathematics
	21 (1969) 1354--1360.
	
	\bibitem{kranakis2016reconstructing}
	E.~Kranakis, D.~Krizanc, Y.~Lu, Reconstructing cactus graphs from shortest 
	path
	information, in: International Conference on Algorithmic Applications in
	Management, Springer, 2016, pp. 150--161.
	
	\bibitem{hakimi1965distance}
	S.~L. Hakimi, S.~S. Yau, Distance matrix of a graph and its realizability,
	Quarterly of applied mathematics 22~(4) (1965) 305--317.
	
	\bibitem{willson2012tree}
	S.~J. Willson, Tree-average distances on certain phylogenetic networks have
	their weights uniquely determined, Algorithms for Molecular Biology 7~(1)
	(2012) 13.
	
	\bibitem{pardi2015reconstructible}
	F.~Pardi, C.~Scornavacca, Reconstructible phylogenetic networks: do not
	distinguish the indistinguishable, PLoS computational biology 11~(4) (2015)
	e1004135.
	
	\bibitem{althofer1988optimal}
	I.~Alth{\"o}fer, On optimal realizations of finite metric spaces by graphs,
	Discrete \& Computational Geometry 3~(1) (1988) 103--122.
	
\end{thebibliography}

%% else use the following coding to input the bibitems directly in the
%% TeX file.

%% \bibitem{label}
%% Text of bibliographic item

\end{document}